\numberwithin{equation}{section}
\newcommand{\beq}{\begin{equation}}
\newcommand{\eeq}{\end{equation}}
\newcommand{\ben}{\begin{eqnarray}}
\newcommand{\een}{\end{eqnarray}}
\newcommand{\beno}{\begin{eqnarray*}}
\newcommand{\eeno}{\end{eqnarray*}}
\newtheorem{theorem}{Theorem}[section]
\newtheorem{definition}[theorem]{Definition}
\newtheorem{lemma}[theorem]{Lemma}
\newtheorem{proposition}[theorem]{Proposition}
\newtheorem{remark}[theorem]{Remark}
\newtheorem{Theorem}{Theorem}[section]
\newtheorem{Corollary}[Theorem]{Corollary}
\begin{document}
\title[Topological entropy on uniform spaces]{Topological entropy of nonautonomous dynamical systems on uniform spaces}

\author{Hua Shao}
\address{Department of Mathematics, Nanjing University of Aeronautics and Astronautics,
  Nanjing 211106, P. R. China}
\address{Key Laboratory of Mathematical Modelling and High Performance Computing of Air Vehicles, Nanjing
University of Aeronautics and Astronautics, MIIT, Nanjing 211106, P. R. China}
\email{huashao@nuaa.edu.cn}
\date{\today}

\maketitle

\begin{abstract}
In this paper, we focus on some properties, calculations and estimations of topological entropy
for a nonautonomous dynamical system $(X,f_{0,\infty})$ generated by a sequence of continuous self-maps
$f_{0,\infty}=\{f_n\}_{n=0}^{\infty}$ on a compact uniform space $X$. We obtain the relations of 
topological entropy among $(X, f_{0,\infty})$, its $k$-th product system and its $n$-th iteration system. 
We confirm that the entropy of $(X, f_{0,\infty})$ equals to that of $f_{0,\infty}$ restricted
to its non-wandering set provided that $f_{0,\infty}$ is equi-continuous. We prove that the entropy of 
$(X, f_{0,\infty})$ is less than or equal to that of its limit system $(X, f)$ when $f_{0,\infty}$ converges uniformly to $f$.
We show that two topologically equi-semiconjugate systems have the same entropy if the equi-semiconjugacy
is finite-to-one. Finally, we derive the estimations of upper and lower bounds of entropy for an invariant
subsystem of a coupled-expanding system associated with a transition matrix.
\end{abstract}

{\bf  Keywords}: Topological entropy; uniform space; nonautonomous dynamical system; topological conjugacy; coupled-expansion.

{2010 {\bf  Mathematics Subject Classification}}: 37B40, 37B55, 54E15.

\section{Introduction}
Let $X$ be a compact uniform space equipped with a uniform structure $\mathcal{U}$ and $f_{0,\infty}=\{f_n\}_{n=0}^{\infty}$ be
a sequence of continuous self-maps on $X$. The pair $(X,f_{0,\infty})$ is called a nonautonomous dynamical system. If $f_n=f$ for all $n\geq0$, then $(X, f_{0,\infty})$ becomes the classical dynamical system $(X,f)$. For any $x_0\in X$, the positive orbit $\{x_n\}_{n=0}^{\infty}$ of $(X,f_{0,\infty})$ starting from $x_0$ is defined by $x_n=f_0^n(x_0)$, where
$f_0^n=f_{n-1}\circ\cdots\circ f_{0}$ for any $n\geq 1$, and $\{x_n\}_{n=0}^{\infty}$ can be seen as a solution
of the nonautonomous difference equation
\[x_{n+1}=f_n(x_n),\;n\geq0.\]

Uniform structures were first introduced by Weil in \cite{Weil37}.
Recall that a uniform structure on $X$ is a collection $\mathcal{U}$ of subsets of $X\times X$ satisfying that

(i) if $\gamma\in\mathcal{U}$, then $\bigtriangleup_{X}\subset\gamma$;

(ii) if $\eta\in\mathcal{U}$ and $\eta\subset\gamma\subset X\times X$, then $\gamma\in\mathcal{U}$;

(iii) if $\gamma,\eta\in\mathcal{U}$, then $\gamma\cap\eta\in\mathcal{U}$;

(iv) if $\gamma\in\mathcal{U}$, then $\gamma^{-1}\in\mathcal{U}$;

(v) if $\gamma\in\mathcal{U}$, then there exists $\eta\in\mathcal{U}$ such that $\eta\circ\eta\subset\gamma$.

In the above, $\bigtriangleup_{X}=\{(x,x): x\in X\}$ denotes the diagonal in $X\times X$;
$\gamma^{-1}=\{(x, y): (y, x)\in\gamma\}$ is the inverse of $\gamma\in\mathcal{U}$, and
$\gamma$ is said to be symmetric if $\gamma^{-1}=\gamma$; and
$\gamma\circ\eta=\{(x, z) : (x, y)\in\gamma,(y, z)\in\eta \;{\rm for} \;{\rm some}\; y\in X\}$
denotes the composition of $\gamma,\eta\in\mathcal{U}$. Denote $\gamma^n=\underbrace{\gamma\circ\cdots\circ\gamma}_{n}$,
and clearly, $\gamma\subset\gamma^n$ for any $n\geq1$.
A member of $\mathcal{U}$ is called an index or entourage and the pair
$(X,\mathcal{U})$ is called a uniform space. A uniform structure $\mathcal{U}$ is separated if $\bigcap_{\gamma\in\mathcal{U}}\gamma=\bigtriangleup_{X}$. It is known that a uniform space $(X,\mathcal{U})$
is Hausdorff if and only if $\mathcal{U}$ is separated.

The topology induced by the uniformity $\mathcal{U}$ or the uniform topology is the family of all subsets $G$
of $X$ such that for any $x\in G$ there exists an $\gamma\in\mathcal{U}$ such that $\gamma[x]\subset G$,
where $\gamma[x]=\{y\in X: (x, y)\in\gamma\}$. Denote by ${\mathcal{U}}^s$, ${\mathcal{U}}^o$ and ${\mathcal{U}}^{s,o}$
be the set of symmetric, open and symmetric open indices in $\mathcal{U}$, respectively.
Note that ${\mathcal{U}}^{s,o}$ is a base for $\mathcal{U}$ \cite{Kelley55}.
A map $f: X\to X$ is called uniformly continuous if $(f\times f)^{-1}(\gamma)\in\mathcal{U}$ for any $\gamma\in\mathcal{U}$.
It is easy to see that $f$ is uniformly continuous if and only if for any $\gamma\in\mathcal{U}$,
there exists $\eta\in\mathcal{U}$ such that $(f(x),f(y))\in\gamma$ for any $(x, y)\in\eta$ \cite{Sal21},
if and only if it is continuous relative to the uniform topology \cite{Kelley55}.
The dynamics of systems on uniform spaces have been studied by many authors
\cite{Arai18,Cecc13,Cecc21,Sal21,Shah20,Yan16,Wang18,Wu19}.
For example, Arai recently in \cite{Arai18} showed that for a continuous action of an Abelian group on a
second countable Baire Hausdorff uniform space without isolated points, Devaney chaos implies Li-Yorke chaos.

Topological entropy provides a numerical measure for the complexity of dynamical systems.
The definition of classical topological entropy was introduced by Adler, Konhelm and McAndrew using open covers
for a continuous map on a compact topological space in 1965 \cite{Adler65}. Their definition was directly inspired
by the one given by Kolmogorov for measure-theoretic entropy. Later, Bowen gave another definition using separated
and spanning sets for a uniformly continuous map on a general metric space \cite{Bowen71}, and this definition is
equivalent to Adler's definition when the space is compact. Hood generalized Bowen's definition to a uniformly
continuous map on a uniform space, and investigated the relations between the entropy of the original map and
the entropy of an induced map on a quotient space \cite{Hood74}. Following Hood's work, Yan and Zeng proved
the two definitions of topological entropy using open covers and using separated and spanning sets
are equivalent on a compact uniform space, and they investigated the relations between pseudo-orbits and
topological entropy \cite{Yan16}. Recently, Ceccherini-Silberstein and Coornaert
extended the notion of topological entropy to a uniformly continuous group action on a compact uniform space \cite{Cecc21}.

It is worth mentioning that Kolyada and Snoha extended the concept of topological entropy to a nonautonomous
dynamical system generated by a sequence of continuous self-maps on a compact metric space, and they obtained
a series of important properties of it. For example, they proved that topological entropy is an invariant under
topological equi-conjugacy \cite{Kolyada96}. Note that the majority of complex systems in biology, physics and
engineering are driven by sequences of different functions, and thus the study on nonautonomous dynamical systems
is of importance in applications. In addition, the behaviors of nonautonomous dynamical systems are much richer and
sometimes quite different than what are expected from the classical cases. For example, Balibrea and Oprocha constructed
a nonautonomous dynamical system on the interval which has positive topological entropy, but does not exhibit Li-Yorke chaos.
For more information on nonautonomous dynamical systems, the readers are referred to \cite{Bali12,Canovas13,Kawan13,Kawan16,Kolyada96,Kolyada99,Liu20,Sal21,Shao16,Shao20,Shao21,Shi09,Xu18,Zhu12}
and references therein.

Motivated by the above work, we shall study the topological entropy of a nonautonomous
dynamical system generated by a sequence of continuous self-maps on a compact uniform space, concentrating on its properties,
calculations and estimations. The rest of the paper is organized as follows. In Section 2, the definitions of topological entropy using open covers and using separated and spanning sets for $(X,f_{0,\infty})$ are introduced, respectively, and these definitions are proved to be equivalent. Several basic properties and calculations of topological entropy of $(X,f_{0,\infty})$ are investigated in Section 3. In Section 4, the relations of topological entropy between two topologically equi-semiconjugate systems are studied, and particularly, they are equivalent if the equi-semiconjugacy is finite-to-one. By establishing the topological equi-semiconjugacy to a subshift of finite type, the estimations of upper and lower bounds of topological entropy for an invariant subsystem of a coupled-expanding system associated with a transition matrix are obtained in Section 5.

\section{Definitions of topological entropy}

In this section, we first recall the definitions of topological entropy using open covers and using separated sets and spanning sets for $(X,f_{0,\infty})$, respectively, and then prove that these definitions are equivalent.

\subsection{Definition with open cover}
Let ${\mathscr{A}}_{1},\cdots,{\mathscr{A}}_{n},\; \mathscr{A}$  be open covers of $X$. Denote
\[\bigvee_{i=1}^{n}{\mathscr{A}}_{i}=\big\{\bigcap_{i=1}^{n}A_{i}: A_{i}\in{\mathscr{A}}_{i},\  1\leq i\leq n\big\},\]
and
\[f_{i}^{-n}(\mathscr{A})=\{f_{i}^{-n}(A): A\in\mathscr{A}\},\;
\mathscr{A}_{i}^{n}(f_{0,\infty})=\bigvee_{j=0}^{n-1}f_{i}^{-j}({\mathscr{A}}),\]
where
\[f_{i}^{n}=f_{i+n-1}\circ\cdots\circ f_{i},
\;f_{i}^{-n}=(f_{i}^{n})^{-1}=f_{i}^{-1}\circ\cdots\circ f_{i+n-1}^{-1},\;i\geq0,\;n\geq1.\]
Let $\mathcal{N}(\mathscr{A})$ be the minimal possible cardinality of all subcovers chosen from $\mathscr{A}$.
Then the topological entropy of $(X,f_{0,\infty})$ on the cover $\mathscr{A}$ is defined by
\begin{align}\label{de}
h(f_{0, \infty},\mathscr{A})=\limsup_{n\rightarrow\infty}\frac{1}{n}\log\mathcal{N}\big(\mathscr{A}_{0}^{n}(f_{0,\infty})\big),
\end{align}
and the topological entropy of $(X,f_{0,\infty})$ is defined by
\begin{align*}
h_{top}(f_{0, \infty})=\sup\{h(f_{0,\infty},\mathscr{A}): \mathscr{A}\; {\rm is\; an\; open\; cover\; of\; X\;}\}.
\end{align*}
If each element of ${\mathscr{A}}_{2}$ is contained in some member of ${\mathscr{A}}_{1}$, then we say
${\mathscr{A}}_{2}$ is a refinement of ${\mathscr{A}}_{1}$, and denote it as ${\mathscr{A}}_{1}\prec{\mathscr{A}}_{2}$.
Clearly,
\[{\mathscr{A}}_{1}\prec{\mathscr{A}}_{2}\Rightarrow h(f_{0,\infty},\mathscr{A}_{1})\leq h(f_{0,\infty},\mathscr{A}_{2}).\]

Let $\Lambda$ be any nonempty subset (not necessarily compact or invariant) of $X$. Denote the cover $\{A\cap \Lambda : A\in\mathscr{A}\}$ of the set $\Lambda$ by $\mathscr{A}|_{\Lambda}$. Then the topological entropy of $f_{0, \infty}$ on $\Lambda$ is defined by
\begin{align}\label{subset}
h(f_{0,\infty},\Lambda):=\sup\big\{\limsup_{n\to\infty}\frac{1}{n}\log\mathcal{N}\big({\mathscr{A}_{0}^{n}(f_{0,\infty})}|_{\Lambda}\big):
\mathscr{A}\; {\rm is\; an\; open\; cover\; of\; X}\big\}.
\end{align}

\subsection{Definitions with separated sets and spanning sets}
Let $n\geq1$ and $\gamma\in{\mathcal{U}}^{s,o}$.  A set $E\subset X$ is called $(n,\gamma)$-separated if for each pair $x, y\in E$ with $x\neq y$, there exists $0\leq j\leq n-1$ such that $(f_0^j(x),f_0^j(y))\notin\gamma$; a set $F\subset X$ is called $(n,\gamma)$-spans another set $K\subset X$ if for each $x\in K$ there exists $y\in F$ such that $(f_0^j(x),f_0^j(y))\in\gamma$ for any $0\leq j\leq n-1$. For a set $\Lambda\subset X$, let $s_n(f_{0,\infty},\Lambda,\gamma)$ be the maximal cardinality of an $(n,\gamma)$-separated set in $\Lambda$, $r_n(f_{0,\infty},\Lambda,\gamma)$ and $r_n^{X}(f_{0,\infty},\Lambda,\gamma)$ be the minimal cardinality of a set in $\Lambda$ and in $X$, respectively, which $(n,\gamma)$-spans $\Lambda$. Clearly,
\begin{align}\label{520}
r_n^{X}(f_{0,\infty},\Lambda,\gamma)\leq r_n(f_{0,\infty},\Lambda,\gamma).
\end{align}
Since $X$ is compact, $s_n(f_{0,\infty},\Lambda,\gamma)$, $r_n(f_{0,\infty},\Lambda,\gamma)$ and $r_n^{X}(f_{0,\infty},\Lambda,\gamma)$ are finite.

Note that the cardinality of a set $\Lambda\subset X$ is denoted by $|\Lambda|$. We have the following inequalities.

\begin{proposition}\label{1}
Let $\Lambda\subset X$, $\gamma_1,\gamma_2,\gamma,\eta\in\mathcal{U}^{s,o}$ and $n\geq1$.

{\rm(i)}If $\gamma_1\subset\gamma_2$, then
\[s_n(f_{0,\infty},\Lambda,\gamma_1)\geq s_n(f_{0,\infty},\Lambda,\gamma_2),\;r_n(f_{0,\infty},\Lambda,\gamma_1)\geq r_n(f_{0,\infty},\Lambda,\gamma_2).\]

{\rm(ii)} If $\eta^2\subset\gamma$, then
\[r_n(f_{0,\infty},\Lambda,\gamma)\leq s_n(f_{0,\infty},\Lambda,\gamma)\leq r_n^{X}(f_{0,\infty},\Lambda,\eta)\leq r_n(f_{0,\infty},\Lambda,\eta).\]
\end{proposition}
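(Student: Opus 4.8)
The plan is to follow the classical Bowen-type arguments, replacing the triangle inequality by the uniform-space axioms: symmetry of entourages, $\bigtriangleup_{X}\subset\gamma$, and the hypothesis $\eta^{2}\subset\gamma$. All four inequalities are obtained by directly comparing the defining families of $(n,\cdot)$-separated and $(n,\cdot)$-spanning sets; finiteness of the three quantities is already guaranteed by compactness of $X$, as noted before the statement.

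For (i), I would work with the witnessing sets themselves. If $E\subset\Lambda$ is $(n,\gamma_{2})$-separated and $x\neq y$ lie in $E$, then some $0\le j\le n-1$ satisfies $(f_{0}^{j}(x),f_{0}^{j}(y))\notin\gamma_{2}$, hence $\notin\gamma_{1}$ since $\gamma_{1}\subset\gamma_{2}$; so $E$ is $(n,\gamma_{1})$-separated, and choosing $E$ of maximal cardinality gives $s_{n}(f_{0,\infty},\Lambda,\gamma_{1})\ge s_{n}(f_{0,\infty},\Lambda,\gamma_{2})$. Dually, if $F$ $(n,\gamma_{1})$-spans $\Lambda$, then for each $x\in\Lambda$ its tracking point $y\in F$ satisfies $(f_{0}^{j}(x),f_{0}^{j}(y))\in\gamma_{1}\subset\gamma_{2}$ for all $j$, so $F$ also $(n,\gamma_{2})$-spans $\Lambda$; choosing $F$ of minimal cardinality gives $r_{n}(f_{0,\infty},\Lambda,\gamma_{2})\le r_{n}(f_{0,\infty},\Lambda,\gamma_{1})$.

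For (ii), the rightmost inequality is exactly (\ref{520}). For the leftmost, fix a maximal $(n,\gamma)$-separated set $E\subset\Lambda$, so $|E|=s_{n}(f_{0,\infty},\Lambda,\gamma)$. For any $x\in\Lambda$: if $x\in E$ then $(f_{0}^{j}(x),f_{0}^{j}(x))\in\bigtriangleup_{X}\subset\gamma$ for all $j$; otherwise $E\cup\{x\}$ is not $(n,\gamma)$-separated, and since $E$ is, the offending pair must be $\{x,y\}$ for some $y\in E$, giving $(f_{0}^{j}(x),f_{0}^{j}(y))\in\gamma$ for all $0\le j\le n-1$. Either way $E$ $(n,\gamma)$-spans $\Lambda$, so $r_{n}(f_{0,\infty},\Lambda,\gamma)\le|E|$. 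For the middle inequality, fix a maximal $(n,\gamma)$-separated $E\subset\Lambda$ and a minimal $F\subset X$ that $(n,\eta)$-spans $\Lambda$, and define $\phi\colon E\to F$ by letting $\phi(x)\in F$ be any point that $(n,\eta)$-tracks $x$. If $\phi(x)=\phi(x')=y$ with $x\neq x'$, then using that $\eta$ is symmetric, $(f_{0}^{j}(x),f_{0}^{j}(y))\in\eta$ and $(f_{0}^{j}(y),f_{0}^{j}(x'))\in\eta$, whence $(f_{0}^{j}(x),f_{0}^{j}(x'))\in\eta\circ\eta=\eta^{2}\subset\gamma$ for every $0\le j\le n-1$, contradicting the $(n,\gamma)$-separatedness of $E$. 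Hence $\phi$ is injective and $s_{n}(f_{0,\infty},\Lambda,\gamma)=|E|\le|F|=r_{n}^{X}(f_{0,\infty},\Lambda,\eta)$.

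There is no serious obstacle here: every inequality reduces to an elementary comparison of the witnessing sets. The only points that require attention are purely formal — using symmetry of $\gamma$ and $\eta$ so that the tracking relation is symmetric, using $\bigtriangleup_{X}\subset\gamma$ for the self-spanning clause, and invoking the composition hypothesis $\eta^{2}\subset\gamma$ exactly once, in the injectivity of $\phi$.
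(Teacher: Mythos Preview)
Your proof is correct and follows essentially the same approach as the paper: part (i) is dismissed there as ``directly derived by the definitions,'' while for part (ii) the paper argues exactly as you do, showing a maximal $(n,\gamma)$-separated set spans $\Lambda$ and then building an injective map $E\to F$ via the $\eta^{2}\subset\gamma$ hypothesis. Your write-up is simply more explicit about the role of symmetry and the diagonal axiom.
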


\begin{proof}
It is only to prove (ii) since (i) is directly derived by the definitions.
Since an $(n,\gamma)$-separated set in $\Lambda$ with the maximal cardinality $(n,\gamma)$-spans $\Lambda$,
\begin{align}\label{001}
r_n(f_{0,\infty},\Lambda,\gamma)\leq s_n(f_{0,\infty},\Lambda,\gamma).
\end{align}
Let $E$ be an $(n,\gamma)$-separated set in $\Lambda$ with $|E|=s_n(f_{0,\infty},\Lambda,\gamma)$ and
$F$ be a subset of $X$ which $(n,\eta)$-spans $\Lambda$ with $|F|=r_n^{X}(f_{0,\infty},\Lambda,\eta)$. For any $x\in E$,
there exists $g(x)\in F$ such that $\big(f_0^j(x),f_0^j(g(x))\big)\in\eta$ for all $0\leq j\leq n-1$. This defines a map $g: E\to F$. If $g(x)=g(y)$ for some $x\neq y\in E$, then
\[(f_0^j(x),f_0^j(y))\in\eta^2\subset\gamma,\;0\leq j\leq n-1,\]
which is a contradiction to the fact that $E$ is an $(n,\gamma)$-separated set. So, $g$ is injective and then
\[|E|=|g(E)|\leq|F|.\]
Hence,
\begin{align}\label{002}
s_n(f_{0,\infty},\Lambda,\gamma)\leq r_n^{X}(f_{0,\infty},\Lambda,\eta).
\end{align}
Therefore, the inequality follows from (\ref{520})-(\ref{002}).
\end{proof}

Let $\Lambda\subset X$ and $\gamma\in{\mathcal{U}}^{s,o}$. Denote
\begin{align}\label{12}
\bar{s}(f_{0,\infty},\Lambda,\gamma)=\limsup_{n\to\infty}\frac{1}{n}\log s_n(f_{0,\infty},\Lambda,\gamma),
\bar{r}(f_{0,\infty},\Lambda,\gamma)=\limsup_{n\to\infty}\frac{1}{n}\log r_n(f_{0,\infty},\Lambda,\gamma).
\end{align}
Note that ${\mathcal{U}}^{s,o}$ is a base for $\mathcal{U}$, and it is a directed set under set inclusion. Thus, $\bar{s}(f_{0,\infty},\Lambda,\gamma)$ and $\bar{r}(f_{0,\infty},\Lambda,\gamma)$ are nets in $\mathbf{R^{+}}$. So, $\lim_{\gamma\in{\mathcal{U}}^{s,o}}\bar{s}(f_{0,\infty},\Lambda,\gamma)$ and $\lim_{\gamma\in{\mathcal{U}}^{s,o}}\bar{r}(f_{0,\infty},\Lambda,\gamma)$ exist and they are equal by Proposition \ref{1}. Hence, define the topological entropy of $f_{0,\infty}$ on the set $\Lambda$ as
\begin{align*}
h(f_{0,\infty},\Lambda)=\lim_{\gamma\in{\mathcal{U}}^{s,o}}\bar{r}(f_{0,\infty},\Lambda,\gamma)
=\lim_{\gamma\in{\mathcal{U}}^{s,o}}\bar{s}(f_{0,\infty},\Lambda,\gamma).
\end{align*}
By Proposition \ref{1}, we also have that
\[h(f_{0,\infty},\Lambda)=\lim_{\gamma\in{\mathcal{U}}^{s,o}}\limsup_{n\to\infty}\frac{1}{n}\log r_n^X(f_{0,\infty},\Lambda,\gamma).\]
If $\Lambda=X$, then $h(f_{0,\infty},X)$, briefly write as $h(f_{0,\infty})$, is called the topological entropy of $(X, f_{0,\infty})$.

\subsection{Equivalence of the two definitions}
The following basic result is an extension of the Lebesgue covering lemma to uniform spaces.

\begin{lemma}\label{Lebesgue covering lemma}\cite{Kelley55}
Let $(X,\mathcal{U})$ be a compact uniform space and $\mathscr{A}$ be an open cover of $X$. Then there exists
$\gamma\in{\mathcal{U}}^{s,o}$ such that $\gamma[x]$ is a subset of some member of $\mathscr{A}$ for any $x\in X$.
\end{lemma}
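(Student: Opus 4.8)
The plan is to run the classical Lebesgue number argument, with sections $\gamma[x]$ of entourages playing the role of metric balls. First I would fix, for each $x\in X$, a member $A_x\in\mathscr{A}$ with $x\in A_x$. Since $A_x$ is open in the uniform topology, the definition of that topology gives an entourage whose section at $x$ lies in $A_x$; as $\mathcal{U}^{s,o}$ is a base for $\mathcal{U}$, I may shrink it to obtain $\gamma_x\in\mathcal{U}^{s,o}$ with $\gamma_x[x]\subset A_x$. Applying axiom (v) (and then symmetrizing and passing to a smaller element of the base $\mathcal{U}^{s,o}$, which does not enlarge $\eta_x\circ\eta_x$), choose $\eta_x\in\mathcal{U}^{s,o}$ with $\eta_x\circ\eta_x\subset\gamma_x$.

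Next I would invoke compactness. Each $\eta_x[x]$ is open in $X$, being the preimage of the open set $\eta_x\subset X\times X$ under the continuous map $y\mapsto(x,y)$, and it contains $x$ because $\bigtriangleup_{X}\subset\eta_x$. Hence $\{\eta_x[x]:x\in X\}$ is an open cover of the compact space $X$, so there are finitely many points $x_1,\dots,x_k$ with $X=\bigcup_{i=1}^{k}\eta_{x_i}[x_i]$. Set $\gamma=\bigcap_{i=1}^{k}\eta_{x_i}$. A finite intersection of symmetric open entourages is again symmetric, open, and a member of $\mathcal{U}$, so $\gamma\in\mathcal{U}^{s,o}$.

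Finally I would verify that $\gamma$ has the required property. Given $x\in X$, pick $i$ with $x\in\eta_{x_i}[x_i]$, i.e.\ $(x_i,x)\in\eta_{x_i}$. For any $y\in\gamma[x]$ we have $(x,y)\in\gamma\subset\eta_{x_i}$, hence $(x_i,y)\in\eta_{x_i}\circ\eta_{x_i}\subset\gamma_{x_i}$, so $y\in\gamma_{x_i}[x_i]\subset A_{x_i}$. Therefore $\gamma[x]\subset A_{x_i}\in\mathscr{A}$, as claimed.

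The argument is routine; the only point demanding care is staying inside $\mathcal{U}^{s,o}$ throughout, since the entourage produced by axiom (v) is a priori neither symmetric nor open. This is handled by the standard trick of replacing $\eta$ with a symmetric open entourage from the base contained in $\eta\cap\eta^{-1}$, which still satisfies the required composition bound; so the main "obstacle" is bookkeeping rather than a genuine difficulty, and compactness is the only non-formal ingredient used.
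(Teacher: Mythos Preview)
Your argument is correct and is the standard proof of the Lebesgue covering lemma in uniform spaces. Note, however, that the paper does not actually give a proof of this lemma: it is stated with a citation to Kelley's \emph{General Topology} and used as a black box. So there is no ``paper's proof'' to compare against; your proposal simply supplies the classical argument that the reference would contain, and it does so cleanly, including the minor bookkeeping needed to keep the chosen entourages inside $\mathcal{U}^{s,o}$.
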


Let $\gamma\in{\mathcal{U}}^{s,o}$. Denote $\mathscr{A}_\gamma:=\{\gamma[x]: x\in X\}$. Then $\mathscr{A}_\gamma$ is an open cover of $X$.

\begin{proposition}\label{2}
Let $\gamma,\eta\in{\mathcal{U}}^{s,o}$ and $n\geq1$.

{\rm(i)}
\[\mathcal{N}\big((\mathscr{A}_\gamma)_{0}^{n}(f_{0,\infty})|_{\Lambda}\big)\leq r_n(f_{0,\infty},\Lambda,\gamma).\]

{\rm(ii)} If $\eta^2\subset\gamma$, then
\[s_n(f_{0,\infty},\Lambda,\gamma)\leq \mathcal{N}\big((\mathscr{A}_\eta)_{0}^{n}(f_{0,\infty})|_{\Lambda}\big).\]

{\rm(iii)}
\[h_{top}(f_{0,\infty},\Lambda)=\lim_{\gamma\in{\mathcal{U}}^{s,o}}
\limsup_{n\to\infty}\frac{1}{n}\log\mathcal{N}\big({({\mathscr{A}_{\gamma}})_{0}^{n}(f_{0,\infty})}|_{\Lambda}\big).\]
\end{proposition}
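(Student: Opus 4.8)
The plan is to establish the two comparison inequalities (i) and (ii) by the standard "cover from spanning set / separated set from cover" argument, transplanted to the uniform setting, and then to deduce (iii) by sandwiching the net in $\gamma$ between two copies of the spanning/separated quantities whose limits we already know to be $h(f_{0,\infty},\Lambda)$.

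First I would prove (i). Let $F\subset X$ be a set which $(n,\gamma)$-spans $\Lambda$ with $|F|=r_n(f_{0,\infty},\Lambda,\gamma)$ — here I use that $F$ may be taken inside $\Lambda$, matching the definition of $r_n$. For each $y\in F$, consider the set $B(y)=\bigcap_{j=0}^{n-1} f_0^{-j}\big(\gamma[f_0^j(y)]\big)$. Since $\gamma[f_0^j(y)]\in\mathscr{A}_\gamma$, the set $f_0^{-j}(\gamma[f_0^j(y)])$ is a member of $f_0^{-j}(\mathscr{A}_\gamma)$, so $B(y)$ is (contained in) a member of $(\mathscr{A}_\gamma)_0^n(f_{0,\infty})=\bigvee_{j=0}^{n-1} f_0^{-j}(\mathscr{A}_\gamma)$. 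The spanning property says exactly that every $x\in\Lambda$ lies in $B(y)$ for some $y\in F$; hence $\{B(y)\cap\Lambda: y\in F\}$ is a subcover of $(\mathscr{A}_\gamma)_0^n(f_{0,\infty})|_\Lambda$ of cardinality at most $|F|$, giving $\mathcal{N}\big((\mathscr{A}_\gamma)_0^n(f_{0,\infty})|_\Lambda\big)\le r_n(f_{0,\infty},\Lambda,\gamma)$.

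Next, for (ii), suppose $\eta^2\subset\gamma$ and let $E\subset\Lambda$ be an $(n,\gamma)$-separated set with $|E|=s_n(f_{0,\infty},\Lambda,\gamma)$. Take a minimal subcover $\mathscr{C}$ of $(\mathscr{A}_\eta)_0^n(f_{0,\infty})|_\Lambda$, so $|\mathscr{C}|=\mathcal{N}\big((\mathscr{A}_\eta)_0^n(f_{0,\infty})|_\Lambda\big)$. The key point is that no member $C$ of $\mathscr{C}$ can contain two distinct points of $E$: a member of $(\mathscr{A}_\eta)_0^n(f_{0,\infty})$ has the form $C=\bigcap_{j=0}^{n-1} f_0^{-j}(\eta[z_j])$ for suitable points $z_j\in X$, so $x,y\in C$ forces $(f_0^j(x),f_0^j(y))\in\eta[z_j]^{-1}\circ\eta[z_j]$; since $\eta$ is symmetric this lies in $\eta\circ\eta=\eta^2\subset\gamma$ for every $0\le j\le n-1$, contradicting $(n,\gamma)$-separatedness of $E$. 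Hence the map sending each $x\in E$ to a chosen member of $\mathscr{C}$ containing it is injective, so $|E|\le|\mathscr{C}|$, which is (ii).

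Finally, (iii): by Lemma \ref{Lebesgue covering lemma} the covers of the form $\mathscr{A}_\gamma$, $\gamma\in\mathcal{U}^{s,o}$, are cofinal among open covers of $X$ in the refinement order — indeed, given any open cover $\mathscr{A}$, the Lebesgue index $\gamma$ it produces satisfies $\mathscr{A}\prec\mathscr{A}_\gamma$ — and refining a cover only increases $\limsup_n \frac1n\log\mathcal{N}(\cdot|_\Lambda)$ as noted after \eqref{de}. Therefore the supremum over all open covers defining $h(f_{0,\infty},\Lambda)$ via \eqref{subset} equals the limit of $\limsup_n \frac1n\log\mathcal{N}\big((\mathscr{A}_\gamma)_0^n(f_{0,\infty})|_\Lambda\big)$ along the net $\gamma\in\mathcal{U}^{s,o}$, provided this net converges; and it does, because taking $\limsup_n\frac1n\log$ of (i) gives $\limsup_n\frac1n\log\mathcal{N}\big((\mathscr{A}_\gamma)_0^n(f_{0,\infty})|_\Lambda\big)\le \bar r(f_{0,\infty},\Lambda,\gamma)$, while taking it in (ii) with $\eta^2\subset\gamma$ gives $\bar s(f_{0,\infty},\Lambda,\gamma)\le \limsup_n\frac1n\log\mathcal{N}\big((\mathscr{A}_\eta)_0^n(f_{0,\infty})|_\Lambda\big)$, so the cover-based net is squeezed between the two nets $\bar r(f_{0,\infty},\Lambda,\cdot)$ and $\bar s(f_{0,\infty},\Lambda,\cdot)$, both of which converge to $h(f_{0,\infty},\Lambda)$ as established before this proposition. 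Passing to the limit over $\gamma$ yields the claimed equality.

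The main obstacle I anticipate is purely bookkeeping rather than conceptual: in (ii) one must carefully track that a general element of the join $(\mathscr{A}_\eta)_0^n(f_{0,\infty})$ really has the product form $\bigcap_j f_0^{-j}(\eta[z_j])$ so that the symmetry-and-composition estimate $\eta^2\subset\gamma$ can be applied coordinatewise, and in (iii) one must make precise the interchange of the supremum over covers with the net limit over $\mathcal{U}^{s,o}$ — this is exactly where cofinality of $\{\mathscr{A}_\gamma\}$ (Lemma \ref{Lebesgue covering lemma}) together with monotonicity under refinement does the work.
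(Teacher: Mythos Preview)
Your proposal is correct and follows essentially the same route as the paper: part (i) builds a subcover from a minimal spanning set via the dynamical balls $\bigcap_j f_0^{-j}(\gamma[f_0^j(y)])$, part (ii) uses the $\eta^2\subset\gamma$ triangle-type estimate to show each member of the refined cover contains at most one separated point, and part (iii) invokes the Lebesgue covering lemma to see that the covers $\mathscr{A}_\gamma$ are cofinal under refinement. Your treatment of (iii) is slightly more detailed than the paper's --- you add a sandwich argument via (i) and (ii) to confirm the net actually converges --- whereas the paper just relies (implicitly) on monotonicity of the net and cofinality; both are fine.
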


\begin{proof}
(i) Let $F$ be a subset of $\Lambda$ which $(n,\gamma)$-spans $\Lambda$ with $|F|=r_n(f_{0,\infty},\Lambda,\gamma)$.
Then for any $y\in \Lambda$, there exists $x\in F$ such that $(f_{0}^{j}(x),f_{0}^{j}(y))\in\gamma$ for any $0\leq j\leq n-1$.
Thus, $y\in\bigcap_{j=0}^{n-1}f_{0}^{-j}\big(\gamma[f_{0}^{j}(x)]\big)$. This implies that
\[\Lambda\subset\bigcup_{x\in F}\bigcap_{j=0}^{n-1}f_{0}^{-j}\big(\gamma[f_{0}^{j}(x)]\big).\]
So, $\{\big(\cap_{j=0}^{n-1}f_{0}^{-j}(\gamma[f_{0}^{j}(x)])\big)\bigcap\Lambda:x\in F\}$ is an open cover of $\Lambda$, and it is also a subcover of $(\mathscr{A}_\gamma)_{0}^{n}(f_{0,\infty})|_{\Lambda}$. Hence,
\[\mathcal{N}\big((\mathscr{A}_\gamma)_{0}^{n}(f_{0,\infty})|_{\Lambda}\big)\leq |F|=r_n(f_{0,\infty},\Lambda,\gamma).\]

{\rm(ii)} Let $E$ be an $(n,\gamma)$-separated set in $\Lambda$ with $|E|=s_n(f_{0,\infty},\Lambda,\gamma)$.
Since every member of $(\mathscr{A}_\eta)_{0}^{n}(f_{0,\infty})|_{\Lambda}$ can contain at most one point of $E$,
\[|E|\leq\mathcal{N}\big((\mathscr{A}_\eta)_{0}^{n}(f_{0,\infty})|_{\Lambda}\big).\]
Thus,
\[s_n(f_{0,\infty},\Lambda,\gamma)\leq \mathcal{N}\big((\mathscr{A}_\eta)_{0}^{n}(f_{0,\infty})|_{\Lambda}\big).\]

{\rm(iii)} Let $\mathscr{A}$ be an open cover of $X$. It follows from Lemma \ref{Lebesgue covering lemma} that there exists
$\gamma\in{\mathcal{U}}^{s,o}$ such that $\mathscr{A}\prec\mathscr{A}_\gamma$, which yields that
\[\limsup_{n\to\infty}\frac{1}{n}\log\mathcal{N}\big(\mathscr{A}_{0}^{n}(f_{0,\infty})|_{\Lambda}\big)
\leq\limsup_{n\to\infty}\frac{1}{n}\log\mathcal{N}\big((\mathscr{A}_{\gamma})_{0}^{n}(f_{0,\infty})|_{\Lambda}\big).\]
Since $\mathscr{A}$ is arbitrary,
\[h_{top}(f_{0,\infty},\Lambda)=\lim_{\gamma\in{\mathcal{U}}^{s,o}}\limsup_{n\to\infty}\frac{1}{n}\log\mathcal{N}
\big((\mathscr{A}_{\gamma})_{0}^{n}(f_{0,\infty})|_{\Lambda}\big).\]
\end{proof}

By Proposition \ref{2}, we have the following result:

\begin{theorem}\label{11}
Let $f_{0,\infty}$ be a sequence of continuous self-maps on a compact uniform space $X$ and
$\Lambda$ be a nonempty subset of $X$. Then
\[h_{top}(f_{0,\infty},\Lambda)=h(f_{0,\infty},\Lambda).\]
\end{theorem}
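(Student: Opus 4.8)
The plan is to read the theorem off Propositions \ref{1} and \ref{2}, which already package both entropies as net limits of the same type over the directed set $\mathcal{U}^{s,o}$. First I would observe that every net in sight is monotone in the entourage: a finer entourage yields a finer cover and does not decrease the separated and spanning numbers, so, by Proposition \ref{1} and the obvious monotonicity of $\mathcal{N}(\cdot)$ under refinement of covers, the nets $\gamma\mapsto\bar{r}(f_{0,\infty},\Lambda,\gamma)$, $\gamma\mapsto\bar{s}(f_{0,\infty},\Lambda,\gamma)$ and $\gamma\mapsto\limsup_{n\to\infty}\frac{1}{n}\log\mathcal{N}\big((\mathscr{A}_\gamma)_{0}^{n}(f_{0,\infty})|_{\Lambda}\big)$ are all non-decreasing as $\gamma$ shrinks. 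Hence each of their net limits equals its supremum over $\mathcal{U}^{s,o}$; in particular, by Proposition \ref{2}(iii),
\[
h_{top}(f_{0,\infty},\Lambda)=\sup_{\gamma\in\mathcal{U}^{s,o}}\limsup_{n\to\infty}\frac{1}{n}\log\mathcal{N}\big((\mathscr{A}_\gamma)_{0}^{n}(f_{0,\infty})|_{\Lambda}\big),
\]
while $h(f_{0,\infty},\Lambda)=\lim_{\gamma}\bar{r}(f_{0,\infty},\Lambda,\gamma)=\sup_{\gamma\in\mathcal{U}^{s,o}}\bar{r}(f_{0,\infty},\Lambda,\gamma)=\sup_{\gamma\in\mathcal{U}^{s,o}}\bar{s}(f_{0,\infty},\Lambda,\gamma)$.

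For the inequality $h_{top}(f_{0,\infty},\Lambda)\le h(f_{0,\infty},\Lambda)$ I would invoke Proposition \ref{2}(i): for every $\gamma\in\mathcal{U}^{s,o}$ and every $n\ge1$,
\[
\mathcal{N}\big((\mathscr{A}_\gamma)_{0}^{n}(f_{0,\infty})|_{\Lambda}\big)\le r_n(f_{0,\infty},\Lambda,\gamma).
\]
Applying $\frac{1}{n}\log(\cdot)$ and then $\limsup_{n\to\infty}$ gives, for each $\gamma$, $\limsup_{n\to\infty}\frac{1}{n}\log\mathcal{N}\big((\mathscr{A}_\gamma)_{0}^{n}(f_{0,\infty})|_{\Lambda}\big)\le\bar{r}(f_{0,\infty},\Lambda,\gamma)$, and taking the supremum over $\gamma\in\mathcal{U}^{s,o}$ on both sides yields the claim by the first paragraph.

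For the reverse inequality I would use Proposition \ref{2}(ii) together with axiom (v) of a uniform structure: given $\gamma\in\mathcal{U}^{s,o}$, fix $\eta\in\mathcal{U}^{s,o}$ with $\eta^2\subset\gamma$. Then $s_n(f_{0,\infty},\Lambda,\gamma)\le\mathcal{N}\big((\mathscr{A}_\eta)_{0}^{n}(f_{0,\infty})|_{\Lambda}\big)$ for all $n\ge1$, hence
\[
\bar{s}(f_{0,\infty},\Lambda,\gamma)\le\limsup_{n\to\infty}\frac{1}{n}\log\mathcal{N}\big((\mathscr{A}_\eta)_{0}^{n}(f_{0,\infty})|_{\Lambda}\big)\le h_{top}(f_{0,\infty},\Lambda),
\]
the last step being just the supremum representation of $h_{top}(f_{0,\infty},\Lambda)$ from the first paragraph. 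Taking the supremum over $\gamma\in\mathcal{U}^{s,o}$ gives $h(f_{0,\infty},\Lambda)\le h_{top}(f_{0,\infty},\Lambda)$, which combined with the previous inequality proves the theorem. I do not expect a genuine obstacle: once Propositions \ref{1} and \ref{2} are in hand the argument is bookkeeping on monotone nets, and the only step meriting an explicit sentence is the ``net limit $=$ supremum'' reduction of the first paragraph, which is exactly what lets the mismatch between the index $\gamma$ on the left and $\eta$ on the right of the last display be absorbed without any further estimate.
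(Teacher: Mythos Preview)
Your argument is correct and is exactly the unpacking the paper has in mind: the paper's own proof is the single line ``By Proposition \ref{2}, we have the following result,'' and what you wrote is precisely how Propositions \ref{1} and \ref{2}(i)--(iii) combine to give both inequalities. The only part you made explicit that the paper leaves implicit is the ``net limit $=$ supremum'' observation, which is indeed the clean way to handle the $\gamma$/$\eta$ index mismatch in the second inequality.
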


\begin{remark}\label{1e}
{\rm(i)} Below we no longer distinguish between $h_{top}(f_{0,\infty},\Lambda)$ and $h(f_{0,\infty},\Lambda)$,
and uniformly use $h(f_{0,\infty},\Lambda)$ to denote the topological entropy of $f_{0,\infty}$ on the set $\Lambda$.

{\rm(ii)} If $f_{n}=f$ for all $n\geq0$, then the limit in {\rm(\ref{subset})} exists by \cite{Adler65}.
This, together with Proposition \ref{2}, implies that the $``\limsup"$ in {\rm(\ref{12})} can be replaced by
$``\liminf"$.
\end{remark}

\section{some properties and calculations of topological entropy}

In this section, some properties and calculations of topological entropy for $(X,f_{0,\infty})$ are investigated.

First, the relations of topological entropy between $(X,f_{0,\infty})$ and its $n$-th iteration system $(X,f_{0,\infty}^{n})$
are studied, where $f_{0,\infty}^{n}=\{f_{kn}^{n}\}_{k=0}^{\infty}$ and $f_{kn}^{n}=f_{kn+n-1}\circ\cdots\circ f_{kn}$
for each $k\geq0$ and $n\geq1$.

\begin{proposition}\label{21}
Let $f_{0,\infty}$ be a sequence of continuous self-maps on a compact uniform space $(X,\mathcal{U})$. Then
\[h(f_{0,\infty}^{n})\leq nh(f_{0,\infty}),\;n\geq1.\]
Furthermore, if $f_{0,\infty}$ is equi-continuous, then
\[h(f_{0,\infty}^{n})=nh(f_{0,\infty}),\;n\geq1.\]
\end{proposition}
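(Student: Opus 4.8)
The idea is to describe both entropies through separated and spanning sets and to use the telescoping identity $(f_{0,\infty}^{n})_{0}^{j}=f_{0}^{jn}$ for all $j\ge0$, which is immediate from $f_{0,\infty}^{n}=\{f_{kn}^{n}\}_{k\ge0}$ and $f_{kn}^{n}=f_{kn+n-1}\circ\cdots\circ f_{kn}$ (the composite $f_{(j-1)n}^{n}\circ\cdots\circ f_{n}^{n}\circ f_{0}^{n}$ collapses to $f_{jn-1}\circ\cdots\circ f_{0}$). Working throughout with $\Lambda=X$ (so $r_{k}=r_{k}^{X}$), the inequality $h(f_{0,\infty}^{n})\le nh(f_{0,\infty})$ comes essentially for free: fix $\gamma\in\mathcal{U}^{s,o}$ and $k\ge1$; if $E\subset X$ is $(k,\gamma)$-separated for $f_{0,\infty}^{n}$ then, whenever $x\neq y$ in $E$ are separated at step $j\le k-1$ of the iterated system, i.e. $(f_{0}^{jn}(x),f_{0}^{jn}(y))\notin\gamma$, they are already separated at step $jn\in\{0,\dots,kn-1\}$ of $f_{0,\infty}$; hence $s_{k}(f_{0,\infty}^{n},X,\gamma)\le s_{kn}(f_{0,\infty},X,\gamma)$, so $\bar s(f_{0,\infty}^{n},X,\gamma)\le n\limsup_{k\to\infty}\frac{1}{kn}\log s_{kn}(f_{0,\infty},X,\gamma)\le n\,\bar s(f_{0,\infty},X,\gamma)$ (the last step being ``$\limsup$ over a subsequence $\le\limsup$''), and taking the net limit over $\gamma$ gives the claim. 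No equi-continuity enters here.

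The core of the reverse inequality, and the step I expect to be the main obstacle, is a uniform modulus-of-continuity estimate that I would record first: \emph{if $f_{0,\infty}$ is equi-continuous, then for every $\gamma\in\mathcal{U}^{s,o}$ there exists $\eta\in\mathcal{U}^{s,o}$ with $\eta\subset\gamma$ such that $(x,y)\in\eta$ implies $(f_{m}^{r}(x),f_{m}^{r}(y))\in\gamma$ for all $m\ge0$ and all $0\le r\le n-1$.} This is proved by applying the definition of equi-continuity $n-1$ times: set $\eta_{0}=\gamma$ and, inductively, pick $\eta_{r+1}\in\mathcal{U}^{s,o}$ with $\eta_{r+1}\subset\eta_{r}$ and $(f_{m}(u),f_{m}(v))\in\eta_{r}$ whenever $(u,v)\in\eta_{r+1}$, \emph{uniformly in $m$}; then $\eta:=\eta_{n-1}$ works, since a short induction on $r$ via $f_{m}^{r+1}=f_{m+r}\circ f_{m}^{r}$ shows $(f_{m}^{r}(x),f_{m}^{r}(y))\in\eta_{n-1-r}\subset\gamma$. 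The crucial and delicate point is that a single $\eta$ serves all indices $m$ simultaneously; this is exactly what equi-continuity grants, and it is precisely why the hypothesis cannot be dropped.

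Granting this, fix $\gamma\in\mathcal{U}^{s,o}$ and the corresponding $\eta$. If $F\subset X$ $(k,\eta)$-spans $X$ for $f_{0,\infty}^{n}$, then $F$ $(kn,\gamma)$-spans $X$ for $f_{0,\infty}$: for $x\in X$ choose $y\in F$ with $(f_{0}^{jn}(x),f_{0}^{jn}(y))\in\eta$ for $0\le j\le k-1$; for $0\le i\le kn-1$ write $i=jn+r$ with $0\le r\le n-1$, use $f_{0}^{i}=f_{jn}^{r}\circ f_{0}^{jn}$, and apply the estimate above at index $jn$ to get $(f_{0}^{i}(x),f_{0}^{i}(y))\in\gamma$. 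Hence $r_{kn}(f_{0,\infty},X,\gamma)\le r_{k}(f_{0,\infty}^{n},X,\eta)$. Since $m\mapsto r_{m}(f_{0,\infty},X,\gamma)$ is nondecreasing, the standard passage from the subsequence $m=kn$ to all $m$ (for $(k-1)n<m\le kn$ bound $r_{m}\le r_{kn}$ and let $\frac{k}{k-1}\to1$) yields $\bar r(f_{0,\infty},X,\gamma)\le\frac{1}{n}\,\bar r(f_{0,\infty}^{n},X,\eta)\le\frac{1}{n}\,h(f_{0,\infty}^{n})$, where the last inequality uses that the net $\eta\mapsto\bar r(f_{0,\infty}^{n},X,\eta)$ is monotone (Proposition \ref{1}), hence bounded above by its limit $h(f_{0,\infty}^{n})$. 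Taking the supremum over $\gamma\in\mathcal{U}^{s,o}$ gives $h(f_{0,\infty})\le\frac{1}{n}h(f_{0,\infty}^{n})$, i.e. $nh(f_{0,\infty})\le h(f_{0,\infty}^{n})$, which together with the first half proves the proposition.
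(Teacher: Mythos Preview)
Your proof is correct and follows essentially the same route as the paper: both arguments hinge on the same equi-continuity modulus $(x,y)\in\eta\Rightarrow(f_{m}^{r}(x),f_{m}^{r}(y))\in\gamma$ for all $m\ge0$ and $0\le r\le n-1$, then compare the systems along the subsequence $kn$ and pass to all indices by monotonicity. The only cosmetic differences are that the paper dispatches the easy inequality $h(f_{0,\infty}^{n})\le nh(f_{0,\infty})$ by citing Lemma~4.2 of Kolyada--Snoha rather than writing out your separated-set argument, and that for the reverse inequality the paper works with separated sets (showing an $(mn,\gamma)$-separated set for $f_{0,\infty}$ is $(m,\eta)$-separated for $f_{0,\infty}^{n}$) while you use the dual spanning-set formulation; either choice gives the same bound.
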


\begin{proof}
Let $n\geq1$. It suffices to prove that $h(f_{0,\infty}^{n})\geq nh(f_{0,\infty})$ when $f_{0,\infty}$ is equi-continuous
by Lemma 4.2 in \cite{Kolyada96}. Since $f_{0,\infty}$ is equi-continuous, for any $\gamma\in{\mathcal{U}}^{s,o}$, there exists
$\eta\in{\mathcal{U}}^{s,o}$ such that for any $x,y\in X$,
\[(x,y)\in \eta\Rightarrow(f_{i}^{k}(x),f_{i}^{k}(y))\in \gamma,\;0\leq k\leq n-1,\;i\geq0.\]
This implies that
\[(f_{i}^{k}(x),f_{i}^{k}(y))\notin \gamma\; {\rm for}\; {\rm some}\; 0\leq k\leq n-1\;{\rm and}\; i\geq0\Rightarrow(x,y)\notin \eta.\]
Thus, if $M$ is an $(mn,\gamma)$-separated set in $X$ under $f_{0,\infty}$ for some $m\geq1$ , then it is also an $(m,\eta)$-separated set in $X$ under $f_{0,\infty}^n$. So,
\[s_{mn}(f_{0,\infty},X,\gamma)\leq s_m(f_{0,\infty}^n,X,\eta).\]
This, together with the fact that
\[s_{(m-1)n+r}(f_{0,\infty},X,\gamma)\leq s_{mn}(f_{0,\infty},X,\gamma),\;r=1,2,\cdots,n,\]
implies that
\[s_{(m-1)n+r}(f_{0,\infty},X,\gamma)\leq s_m(f_{0,\infty}^n,X,\eta),\;r=1,2,\cdots,n.\]
Hence, for any $1\leq r\leq n$,
\[\limsup_{m\to\infty}\frac{1}{m}\log s_m(f_{0,\infty}^n,X,\eta)\geq n\limsup_{m\to\infty}\frac{1}{(m-1)n+r}
\log s_{(m-1)n+r}(f_{0,\infty},X,\gamma),\]
which yields that
\[\lim_{\eta\in{\mathcal{U}}^{s,o}}\limsup_{m\to\infty}\frac{1}{m}\log s_m(f_{0,\infty}^n,X,\eta)\geq n\lim_{\gamma\in{\mathcal{U}}^{s,o}}\limsup_{m\to\infty}\frac{1}{(m-1)n+r}\log s_{(m-1)n+r}(f_{0,\infty},X,\gamma)\]
for any $1\leq r\leq n$, and therefore, $h(f_{0,\infty}^{n})\geq nh(f_{0,\infty})$.
\end{proof}

Given another system $(Y,g_{0,\infty})$, where $Y$ is a compact uniform space equipped with a uniform structure $\mathcal{V}$
and $g_{0,\infty}=\{g_{n}\}_{n=0}^{\infty}$ is a sequence of continuous self-maps on $Y$. Let $X\times Y$ be the product space of $X$ and $Y$, and $f_{0,\infty}\times g_{0,\infty}=\{f_{n}\times g_{n}\}_{n=0}^{\infty}$. Then $f_{n}\times g_{n}$ is a continuous self-map on $X\times Y$
for any $n\geq0$. Note that the family of sets of the form $W(\gamma,\eta)$ is a base for the product uniformity of $X\times Y$, where
\[W(\gamma,\eta)=\{\big((x,y),(u,v)\big): (x,u)\in \gamma \;{\rm and}\; (y,v)\in \eta\},\;\gamma\in{\mathcal{U}}^{s,o}, \;\eta\in{\mathcal{V}}^{s,o}.\]

\begin{proposition}\label{223}
Let $f_{0,\infty}$ and $g_{0,\infty}$ be two sequences of continuous self-maps on compact uniform spaces $(X,\mathcal{U})$ and $(Y,\mathcal{V})$, respectively. Then
\[h(f_{0,\infty}\times g_{0,\infty})\leq h(f_{0,\infty})+h(g_{0,\infty}).\]
\end{proposition}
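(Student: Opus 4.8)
The plan is to prove the inequality directly from the spanning--set description of entropy, exploiting the fact that spanning numbers are sub--multiplicative under products and that the entourages $W(\gamma,\eta)$ form a cofinal family in the product uniformity. First I would fix $\gamma\in\mathcal{U}^{s,o}$, $\eta\in\mathcal{V}^{s,o}$ and $n\geq1$, and note that $W(\gamma,\eta)$ is a symmetric open index of $X\times Y$: it is symmetric because $\gamma,\eta$ are, and it is open because, under the canonical identification $(X\times Y)\times(X\times Y)\cong(X\times X)\times(Y\times Y)$, it corresponds to $\gamma\times\eta$. The key combinatorial step is the claim that if $E\subset X$ is a set which $(n,\gamma)$-spans $X$ under $f_{0,\infty}$ and $F\subset Y$ is a set which $(n,\eta)$-spans $Y$ under $g_{0,\infty}$, then $E\times F$ is a set which $(n,W(\gamma,\eta))$-spans $X\times Y$ under $f_{0,\infty}\times g_{0,\infty}$. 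This follows because $(f_{0,\infty}\times g_{0,\infty})_0^{\,j}=f_0^j\times g_0^j$: given $(x,y)\in X\times Y$, choose $u\in E$ with $(f_0^j(x),f_0^j(u))\in\gamma$ for all $0\leq j\leq n-1$ and $v\in F$ with $(g_0^j(y),g_0^j(v))\in\eta$ for all $0\leq j\leq n-1$, and then $\big((f_0^j\times g_0^j)(x,y),(f_0^j\times g_0^j)(u,v)\big)\in W(\gamma,\eta)$ for all $0\leq j\leq n-1$. Choosing $E$ and $F$ of minimal cardinality gives
\[r_n(f_{0,\infty}\times g_{0,\infty},X\times Y,W(\gamma,\eta))\leq r_n(f_{0,\infty},X,\gamma)\cdot r_n(g_{0,\infty},Y,\eta).\]

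Next I would take logarithms, divide by $n$, let $n\to\infty$, and use $\limsup_n(a_n+b_n)\leq\limsup_n a_n+\limsup_n b_n$ to obtain
\[\bar{r}(f_{0,\infty}\times g_{0,\infty},X\times Y,W(\gamma,\eta))\leq\bar{r}(f_{0,\infty},X,\gamma)+\bar{r}(g_{0,\infty},Y,\eta).\]
Finally I would pass to the limit over the entourages. Since $\{W(\gamma,\eta):\gamma\in\mathcal{U}^{s,o},\eta\in\mathcal{V}^{s,o}\}$ is a base for the product uniformity, it is cofinal in the set of symmetric open indices of $X\times Y$ ordered by reverse inclusion: for every such index $\zeta$ there are $\gamma,\eta$ with $W(\gamma,\eta)\subset\zeta$, whence $\bar{r}(f_{0,\infty}\times g_{0,\infty},X\times Y,\zeta)\leq\bar{r}(f_{0,\infty}\times g_{0,\infty},X\times Y,W(\gamma,\eta))$ by Proposition \ref{1}(i). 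Therefore the net defining $h(f_{0,\infty}\times g_{0,\infty})$ may be evaluated along the subnet indexed by the product directed set $\mathcal{U}^{s,o}\times\mathcal{V}^{s,o}$, and
\[h(f_{0,\infty}\times g_{0,\infty})=\lim_{(\gamma,\eta)}\bar{r}(f_{0,\infty}\times g_{0,\infty},X\times Y,W(\gamma,\eta))\leq\lim_{\gamma\in\mathcal{U}^{s,o}}\bar{r}(f_{0,\infty},X,\gamma)+\lim_{\eta\in\mathcal{V}^{s,o}}\bar{r}(g_{0,\infty},Y,\eta),\]
the right-hand side being exactly $h(f_{0,\infty})+h(g_{0,\infty})$.

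The step I expect to be the main obstacle is the last one: justifying carefully that the limit of the entropy net over \emph{all} symmetric open indices of $X\times Y$ coincides with the limit over the cofinal subnet given by the $W(\gamma,\eta)$'s, and that the limit of the sum of two quantities depending on the two independent coordinates of the directed product splits as the sum of the two individual limits. Both facts are standard properties of nets (convergence is unchanged along a cofinal subnet, and limits add), but they are the only places where one must be attentive; the product bound on spanning numbers and the $\limsup$ inequality are routine.
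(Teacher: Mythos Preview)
Your proposal is correct and follows essentially the same route as the paper: both arguments show that the product of minimal spanning sets is an $(n,W(\gamma,\eta))$-spanning set, deduce the multiplicative bound $r_n(f_{0,\infty}\times g_{0,\infty},X\times Y,W(\gamma,\eta))\leq r_n(f_{0,\infty},X,\gamma)\,r_n(g_{0,\infty},Y,\eta)$, and then take $\limsup$ and pass to the limit over the cofinal base $\{W(\gamma,\eta)\}$. Your treatment of the cofinality step is in fact more explicit than the paper's, which simply writes the limit over $(\gamma,\eta)$ without further comment.
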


\begin{proof}
Let $\gamma\in{\mathcal{U}}^{s,o}$, $\eta\in{\mathcal{V}}^{s,o}$ and $n\geq1$.
Suppose that $F_1$ is a set in $X$ which $(n,\gamma)$-spans $X$ under $f_{0,\infty}$ with $|F_1|=r_n(f_{0,\infty},X,\gamma)$,
and $F_2$ is a set in $Y$ which $(n,\eta)$-spans $Y$ under $g_{0,\infty}$ with $|F_2|=r_n(g_{0,\infty},Y,\eta)$. Then
$F_1\times F_2$ is a set in $X\times Y$ which $(n,W(\gamma,\eta))$-spans $X\times Y$ under $f_{0,\infty}\times g_{0,\infty}$.
In fact, for any $(x,y)\in X\times Y$, there exist $z_1\in F_1$ and $z_2\in F_2$ such that $(f_{0}^{k}(x),f_{0}^{k}(z_1))\in \gamma$
and $(g_{0}^{k}(y),g_{0}^{k}(z_2))\in \eta$ for any $0\leq k\leq n-1$, which yields that
\[\big((f_{0}^{k}(x),g_{0}^{k}(y)),(f_{0}^{k}(z_1),g_{0}^{k}(z_2))\big)\in W(\gamma,\eta),\;0\leq k\leq n-1.\]
Then
\[r_n(f_{0,\infty}\times g_{0,\infty},X\times Y,W(\gamma,\eta))\leq r_n(f_{0,\infty},X,\gamma)r_n(g_{0,\infty},Y,\eta).\]
Thus,
\begin{align*}
&\lim_{\gamma\in{\mathcal{U}}^{s,o},\eta\in{\mathcal{V}}^{s,o}}\limsup_{n\to\infty}\frac{1}{n}
\log r_n(f_{0,\infty}\times g_{0,\infty},X\times Y,W(\gamma,\eta))\\
&\leq\lim_{\gamma\in{\mathcal{U}}^{s,o}}\limsup_{n\to\infty}\frac{1}{n}\log r_n(f_{0,\infty},X,\gamma)
+\lim_{\eta\in{\mathcal{V}}^{s,o}}\limsup_{n\to\infty}\frac{1}{n}\log r_n(g_{0,\infty},Y,\eta).
\end{align*}
Hence,
$h(f_{0,\infty}\times g_{0,\infty})\leq h(f_{0,\infty})+h(g_{0,\infty})$.
\end{proof}

If $f_{n}=f$ and $g_{n}=g$ for all $n\geq1$, then ``$\leq$" in Proposition \ref{223} can be replaced by ``$=$".

\begin{Corollary}
Let $f$ and $g$ be two continuous self-maps on compact uniform spaces $(X,\mathcal{U})$ and $(Y,\mathcal{V})$, respectively. Then
\[h(f\times g)=h(f)+h(g).\]
\end{Corollary}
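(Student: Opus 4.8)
The plan is to upgrade the inequality $h(f\times g)\le h(f)+h(g)$, which already follows from Proposition~\ref{223} applied to the constant sequences $f_n=f$, $g_n=g$, to an equality by establishing the reverse inequality $h(f\times g)\ge h(f)+h(g)$. The natural device is the product of separated sets: if $E\subset X$ is $(n,\gamma)$-separated under $f$ and $E'\subset Y$ is $(n,\eta)$-separated under $g$, I claim $E\times E'\subset X\times Y$ is $(n,W(\gamma,\eta))$-separated under $f\times g$. Indeed, for distinct $(x,y),(x',y')\in E\times E'$ at least one coordinate differs, say $x\ne x'$; then some $0\le j\le n-1$ gives $(f^j(x),f^j(x'))\notin\gamma$, whence $\big((f^j(x),g^j(y)),(f^j(x'),g^j(y'))\big)\notin W(\gamma,\eta)$ by definition of $W(\gamma,\eta)$. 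Consequently
\[
s_n\big(f\times g,X\times Y,W(\gamma,\eta)\big)\ \ge\ s_n(f,X,\gamma)\,s_n(g,Y,\eta).
\]
Taking $\frac1n\log$ of both sides, then $\limsup_{n\to\infty}$, and using that the $\limsup$ of a product is at least the product of the individual $\limsup$'s (both quantities being nonnegative), gives
\[
\bar s\big(f\times g,X\times Y,W(\gamma,\eta)\big)\ \ge\ \bar s(f,X,\gamma)+\bar s(g,Y,\eta).
\]

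Next I would pass to the limit over entourages. Since $\{W(\gamma,\eta):\gamma\in\mathcal{U}^{s,o},\eta\in\mathcal{V}^{s,o}\}$ is a base for the product uniformity, the net over all symmetric open entourages of $X\times Y$ is cofinal with the net over the $W(\gamma,\eta)$, so by Proposition~\ref{1} the limit defining $h(f\times g)$ may be computed along $W(\gamma,\eta)$ as $(\gamma,\eta)$ runs through $\mathcal{U}^{s,o}\times\mathcal{V}^{s,o}$. Taking $\lim_{\gamma\in\mathcal{U}^{s,o},\,\eta\in\mathcal{V}^{s,o}}$ on both sides of the last displayed inequality yields
\[
h(f\times g)\ \ge\ \lim_{\gamma\in\mathcal{U}^{s,o}}\bar s(f,X,\gamma)+\lim_{\eta\in\mathcal{V}^{s,o}}\bar s(g,Y,\eta)\ =\ h(f)+h(g),
\]
which is the desired reverse inequality.

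The one point requiring care — and the place I expect the subtlety to lie — is the interchange of $\limsup$ with the product, i.e.\ justifying $\limsup_n (a_n b_n)\ge (\limsup_n a_n)(\limsup_n b_n)$ after taking logarithms, or equivalently $\limsup_n(\alpha_n+\beta_n)\ge\limsup_n\alpha_n+\limsup_n\beta_n$. In general this is false, but here it is rescued by Remark~\ref{1e}(ii): for the autonomous (constant) sequences the relevant limits actually exist (the $\limsup$ in (\ref{12}) can be replaced by an honest limit), so $\limsup_n(\alpha_n+\beta_n)=\lim_n\alpha_n+\lim_n\beta_n$ with no loss. I would state this reliance on Remark~\ref{1e}(ii) explicitly. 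Everything else — the separation property of the product, finiteness of all counts since $X\times Y$ is compact, and the cofinality of the entourage base — is routine. Combining the two inequalities gives $h(f\times g)=h(f)+h(g)$.
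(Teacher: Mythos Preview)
Your proposal is correct and follows essentially the same route as the paper: product of separated sets to get $s_n(f\times g,X\times Y,W(\gamma,\eta))\ge s_n(f,X,\gamma)\,s_n(g,Y,\eta)$, then an appeal to Remark~\ref{1e}(ii) to handle the limit. The only cosmetic difference is that the paper takes $\liminf$ rather than $\limsup$ after the logarithm---since $\liminf(\alpha_n+\beta_n)\ge\liminf\alpha_n+\liminf\beta_n$ holds unconditionally, this sidesteps the need to first write down a false inequality and then repair it, but Remark~\ref{1e}(ii) is still invoked at the end to identify the $\liminf$ quantities with the entropies.
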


\begin{proof}
It is only to show $h(f\times g)\geq h(f)+h(g)$ by Proposition \ref{223}.
Let $\gamma\in{\mathcal{U}}^{s,o}$, $\eta\in{\mathcal{V}}^{s,o}$ and $n\geq1$. Suppose that $E_1$ is an $(n,\gamma)$-separated set in $X$ under $f$ with $|E_1|=s_n(f,X,\gamma)$, and $E_2$ is an $(n,\eta)$-separated set in $Y$ under $g$ with $|E_2|=s_n(g,Y,\eta)$. It is to show that $E_1\times E_2$ is an $(n,W(\gamma,\eta))$-separated set in $X\times Y$ under $f\times g$. Let $x=(x_1,x_2),y=(y_1,y_2)\in E_1\times E_2$ with $x\neq y$. Without loss of generality, suppose that $x_{1}\neq y_{1}$. Then there exists $0\leq j\leq n-1$ such that $(f^{j}(x_{1}),f^{j}(y_{1}))\notin\gamma$, which implies that
\[\big((f^{j}(x_{1}),g^{j}(x_{2})),(f^{j}(y_{1}),g^{j}(y_{2}))\big)\notin W(\gamma,\eta).\]
Thus,
\[s_n(f\times g,X\times Y,W(\gamma,\eta))\geq s_n(f,X,\gamma)s_n(g,Y,\eta),\]
which yields that
\begin{align*}
&\lim_{\gamma\in{\mathcal{U}}^{s,o},V\in{\mathcal{V}}^{s,o}}\liminf_{n\to\infty}\frac{1}{n}\log s_n(f\times g,X\times Y,W(\gamma,\eta))\\
&\geq \lim_{\gamma\in{\mathcal{U}}^{s,o}}\liminf_{n\to\infty}\frac{1}{n}\log s_n(f,X,\gamma)
+\lim_{\gamma\in{\mathcal{U}}^{s,o}}\liminf_{n\to\infty}\frac{1}{n}\log s_n(g,Y,\eta).
\end{align*}
This, together with Remark \ref{1e} (ii), implies that $h(f\times g)\geq h(f)+h(g)$.
\end{proof}

The following result reveals the relations of topological entropy between $(X,f_{0,\infty})$ and its $k$-th product system $(X^k,f_{0,\infty}^{(k)})$, where $X^k=\underbrace{X\times\cdots\times X}_{k}$, $f_{0,\infty}^{(k)}=\{f_{n}^{(k)}\}_{n=0}^{\infty}$ and
$f_{n}^{(k)}=\underbrace{f_n\times\cdots\times f_n}_{k}$, $k\geq1$.

\begin{proposition}\label{22}
Let $f_{0,\infty}$ be a sequence of continuous self-maps on a compact uniform space $(X,\mathcal{U})$. Then
\[h(f_{0,\infty}^{(k)})=kh(f_{0,\infty}),\;k\geq1.\]
\end{proposition}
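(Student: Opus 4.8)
The plan is to establish the two inequalities $h(f_{0,\infty}^{(k)})\leq kh(f_{0,\infty})$ and $h(f_{0,\infty}^{(k)})\geq kh(f_{0,\infty})$ separately and then combine them. For the upper bound I would read the $k$-fold product as an iterated two-fold product and invoke Proposition \ref{223}. Since at each stage $n$ one has $f_n^{(k)}=f_n^{(k-1)}\times f_n$, the product system splits as $f_{0,\infty}^{(k)}=f_{0,\infty}^{(k-1)}\times f_{0,\infty}$ over $X^{k-1}\times X=X^k$, so Proposition \ref{223} gives $h(f_{0,\infty}^{(k)})\leq h(f_{0,\infty}^{(k-1)})+h(f_{0,\infty})$. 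An induction on $k$ with base case $h(f_{0,\infty}^{(1)})=h(f_{0,\infty})$ then yields $h(f_{0,\infty}^{(k)})\leq kh(f_{0,\infty})$.

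For the lower bound I would argue directly with separated sets, exploiting that all $k$ factors are the same system. Fix $\gamma\in{\mathcal{U}}^{s,o}$ and let $W_k(\gamma)$ be the symmetric open entourage of $X^k$ consisting of all pairs $\big((x_1,\ldots,x_k),(y_1,\ldots,y_k)\big)$ with $(x_i,y_i)\in\gamma$ for every $1\leq i\leq k$; the family $\{W_k(\gamma):\gamma\in{\mathcal{U}}^{s,o}\}$ is a base for the product uniformity of $X^k$, the $k$-fold analogue of the base $\{W(\gamma,\eta)\}$ recalled before Proposition \ref{223}. Choose an $(n,\gamma)$-separated set $E\subset X$ under $f_{0,\infty}$ with $|E|=s_n(f_{0,\infty},X,\gamma)$. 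Because the iterates of the product system act coordinatewise, namely $(f_{0,\infty}^{(k)})_0^j(x_1,\ldots,x_k)=(f_0^j(x_1),\ldots,f_0^j(x_k))$, the set $E^k=\underbrace{E\times\cdots\times E}_{k}$ is $(n,W_k(\gamma))$-separated in $X^k$: two distinct points of $E^k$ differ in some coordinate $i$, so some $0\leq j\leq n-1$ satisfies $(f_0^j(x_i),f_0^j(y_i))\notin\gamma$ and the two $k$-tuples leave $W_k(\gamma)$ at time $j$. Consequently $s_n(f_{0,\infty}^{(k)},X^k,W_k(\gamma))\geq|E|^k=s_n(f_{0,\infty},X,\gamma)^k$.

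Taking $\frac{1}{n}\log$ and then $\limsup_{n\to\infty}$ in this inequality gives $\bar{s}(f_{0,\infty}^{(k)},X^k,W_k(\gamma))\geq k\,\bar{s}(f_{0,\infty},X,\gamma)$; the essential point is that the right-hand side is $k$ times one and the same sequence, so the factor $k$ passes through the $\limsup$ exactly, which is precisely what fails for a sum of two distinct sequences and is the reason Proposition \ref{223} only produced ``$\leq$''. Passing to the limit along $\gamma\in{\mathcal{U}}^{s,o}$, and using that $\{W_k(\gamma)\}$ is cofinal in the product uniformity so that $h(f_{0,\infty}^{(k)})$ may be evaluated along this subnet, I obtain $h(f_{0,\infty}^{(k)})\geq kh(f_{0,\infty})$; together with the upper bound this gives the equality. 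The main point requiring care is the base/cofinality claim for $\{W_k(\gamma)\}$, which legitimizes computing the entropy of $f_{0,\infty}^{(k)}$ over the diagonal product entourages alone instead of over all entourages of $X^k$; the separated-set bookkeeping itself is routine.
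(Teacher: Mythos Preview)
Your proof is correct and follows essentially the same route as the paper: the upper bound via Proposition \ref{223} (the paper states it in one line rather than spelling out the induction, but the content is the same) and the lower bound by showing that $E^k$ is $(n,W_k(\gamma))$-separated whenever $E$ is $(n,\gamma)$-separated, yielding $s_n(f_{0,\infty}^{(k)},X^k,W_k(\gamma))\geq s_n(f_{0,\infty},X,\gamma)^k$. Your explicit remarks on why the factor $k$ survives the $\limsup$ and on the cofinality of the diagonal entourages $\{W_k(\gamma)\}$ are welcome clarifications that the paper leaves implicit.
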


\begin{proof}
Let $k\geq1$. By Proposition \ref{223}, $h(f_{0,\infty}^{(k)})\leq kh(f_{0,\infty})$.
On the other hand, let $\eta\in{\mathcal{U}}^{s,o}$ and $E\subset X$ be an $(n,\eta)$-separated set under $f_{0,\infty}$ with $|E|=s_n(f_{0,\infty},X,\eta)$ for any fixed $n\geq1$. Then $E^k\subset X^k$ is an $(n,W(\underbrace{\eta,\cdots,\eta}_{k}))$-separated set under $f_{0,\infty}^{(k)}$. In fact, for any $x=(x_1,\cdots,x_k),y=(y_1,\cdots,y_k)\in E^k$ with $x\neq y$, there exists $1\leq i_0\leq k$
such that $x_{i_0}\neq y_{i_0}$. Then there exists $0\leq j\leq n-1$ such that $(f_{0}^{j}(x_{i_0}),f_{0}^{j}(y_{i_0}))\notin\eta$,
which implies that
\[\big((f_{0}^{j}(x_{1}),\cdots,f_{0}^{j}(x_{k})),(f_{0}^{j}(y_{1}),\cdots,f_{0}^{j}(y_{k}))\big)\notin W(\eta,\cdots,\eta).\]
Thus,
\[s_n\big(f_{0,\infty}^{(k)},X^k,W(\eta,\cdots,\eta)\big)\geq \big(s_n(f_{0,\infty},X^k,\eta)\big)^{k}.\]
So,
\begin{align*}
\lim_{\eta\in{\mathcal{U}}^{s,o}}\limsup_{n\to\infty}\frac{1}{n}\log s_n(f_{0,\infty}^{(k)},X^k,W(\eta,\cdots,\eta))
&\geq k\lim_{\eta\in{\mathcal{U}}^{s,o}}\limsup_{n\to\infty}\frac{1}{n}\log s_n(f_{0,\infty},X,\eta)\\
&=kh(f_{0,\infty}).
\end{align*}
Therefore, $h(f_{0,\infty}^{(k)})\geq kh(f_{0,\infty})$.
\end{proof}

An open set $U\subset X$ is said to be wandering under $(X,f_{0,\infty})$ if $f_{n}^{k}(U)\cap U=\emptyset$ for all $n\geq0$ and $k\geq1$,
and $x\in X$ is called a wandering point if it belongs to some wandering set. Thus, $x\in X$ is called a non-wandering point under $(X,f_{0,\infty})$ if for any open set $U$ containing $x$, there exist $n\geq0$ and $k\geq1$ such that $f_{n}^{k}(U)\cap U\neq\emptyset$.
Denote the set of all non-wandering points of $(X,f_{0,\infty})$ by $\Omega(f_{0,\infty})$.

\begin{theorem}\label{h}
Let $f_{0,\infty}=\{f_n\}_{n=0}^{\infty}$ be a sequence of equi-continuous self-maps on a compact uniform space $(X,\mathcal{U})$. Then
\[h(f_{0,\infty})= h\big(f_{0,\infty},\Omega(f_{0,\infty})\big).\]
\end{theorem}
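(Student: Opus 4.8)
The plan is to prove the two inequalities $h(f_{0,\infty}) \ge h(f_{0,\infty},\Omega(f_{0,\infty}))$ and $h(f_{0,\infty}) \le h(f_{0,\infty},\Omega(f_{0,\infty}))$ separately. The first one is trivial: since $\Omega(f_{0,\infty}) \subset X$, any $(n,\gamma)$-separated set in $\Omega(f_{0,\infty})$ is an $(n,\gamma)$-separated set in $X$, so $s_n(f_{0,\infty},\Omega(f_{0,\infty}),\gamma) \le s_n(f_{0,\infty},X,\gamma)$ for every $\gamma \in \mathcal{U}^{s,o}$ and $n\ge1$; taking $\limsup_{n}\frac1n\log(\cdot)$ and then the limit over $\gamma \in \mathcal{U}^{s,o}$ gives the inequality. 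The real content is the reverse inequality, i.e. that the non-wandering set already carries all the entropy.

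For the hard direction I would follow the classical Bowen-style argument adapted to the nonautonomous, uniform-space setting. Fix $\gamma \in \mathcal{U}^{s,o}$ and choose, by uniform continuity, a symmetric open $\eta$ with $\eta^4 \subset \gamma$ (or whatever small power is needed to absorb the estimates below). The goal is to bound $s_n(f_{0,\infty},X,\gamma)$ in terms of quantities supported near $\Omega(f_{0,\infty})$. The key topological input is that $X \setminus \Omega(f_{0,\infty})$ is a countable union of wandering open sets, and by compactness any point $x \in X$ whose whole orbit $\{f_0^j(x)\}_{j\ge0}$ stays outside a fixed neighborhood of $\Omega(f_{0,\infty})$ can only spend a bounded number of steps in each wandering set; more precisely, for a wandering open set $U$ one shows (using equi-continuity: choose $\eta$ uniform in the time index, so the argument is not spoiled by the maps changing with $n$) that a single orbit can visit the $\eta$-neighborhood of a point of $U$ at most a bounded number $N_U$ of times. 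Covering a compact neighborhood of $X\setminus\Omega$ by finitely many such wandering sets $U_1,\dots,U_m$ gives a uniform bound $N$ on the number of indices $j\in\{0,\dots,n-1\}$ at which the orbit of any point can lie outside a fixed open neighborhood $V$ of $\Omega(f_{0,\infty})$, provided that orbit does not meet $V$ at all — and more usefully, for an arbitrary orbit one splits the time interval $\{0,\dots,n-1\}$ into the (at most $N$, up to a constant) blocks where the orbit sits in $V$ and where it sits outside.

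The counting then proceeds as follows. An $(n,\gamma)$-separated set $E$ in $X$ is partitioned according to the "itinerary pattern" recording which of the $\le N$ excursions outside $V$ occur and when; there are at most polynomially (in $n$) many such patterns, which contributes $0$ to the exponential rate. Within a fixed pattern, the points are genuinely separated only along the subintervals of time spent in $V$, and on those subintervals one replaces each orbit segment by a nearby point of $\Omega(f_{0,\infty})$ (here one spends a factor of $\eta$), so the number of points in each pattern-class is bounded by a product, over the blocks of $V$-time, of $(\ell,\eta^2)$-spanning numbers of $\Omega(f_{0,\infty})$ for block-lengths $\ell$ summing to $\le n$. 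Using submultiplicativity of the spanning numbers along $f_{0,\infty}$ (which for the nonautonomous system holds because $f_0^{j+\ell} = (f_j^{\ell})\circ f_0^j$ and equi-continuity makes the constants uniform in $j$), this product is at most $r^X$-type quantities at scale $n$ for $\Omega(f_{0,\infty})$ up to a fixed power of $\eta$. Putting it together: $\frac1n\log s_n(f_{0,\infty},X,\gamma) \le o(1) + \frac1n\log s_n(f_{0,\infty},\Omega(f_{0,\infty}),\eta')$ for a suitable $\eta'$, and letting $n\to\infty$ and then $\gamma$ (hence $\eta, \eta'$) shrink through $\mathcal{U}^{s,o}$ yields $h(f_{0,\infty}) \le h(f_{0,\infty},\Omega(f_{0,\infty}))$.

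The main obstacle I anticipate is making the "bounded number of excursions" estimate genuinely uniform in the time index $n$, since the maps $f_n$ vary: one must use equi-continuity at exactly the right place so that a single entourage $\eta$ works simultaneously for all $f_n$ when showing a wandering set can be visited boundedly often, and again when transferring separation from a length-$n$ orbit segment in $X$ to one near $\Omega(f_{0,\infty})$. A secondary technical point is the bookkeeping of the partition into excursion patterns — ensuring the number of patterns is subexponential in $n$ and that the within-pattern count really is controlled by spanning numbers of $\Omega(f_{0,\infty})$ and not of all of $X$. Once the uniform version of Bowen's lemma is set up in the uniform-space language (replacing metric balls by entourage sections $\gamma[x]$ and using Proposition \ref{1} to pass freely between separated and spanning numbers at comparable scales), the rest is the standard exponential-counting argument.
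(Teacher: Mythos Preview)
Your approach is genuinely different from the paper's, and it has a real gap at the ``submultiplicativity'' step. The paper does not use a Bowen-style separated-set argument at all: it works entirely with open covers, passes to the $n$-th iterated system $f_{0,\infty}^{n}$, and applies the combinatorial counting lemma of Alsed\`a--Llibre--Misiurewicz (Lemma~4.1.5 in \cite{Alse93}). The only place equi-continuity enters is through Proposition~\ref{21}, which gives $h(f_{0,\infty}^{n})=n\,h(f_{0,\infty})$. Concretely, for a fixed cover $\mathscr{A}$ one takes a minimal subcover $\mathscr{B}_n$ of $\Omega(f_{0,\infty})$ from $\mathscr{A}_0^n(f_{0,\infty})$, completes it with finitely many wandering pieces to a refinement $\mathscr{C}_n$, and then counts words of length $k$ in $(\mathscr{C}_n)_0^k(f_{0,\infty}^n)$ using that each wandering piece can appear at most once; this yields $h(f_{0,\infty}^n,\mathscr{C}_n)\le\log|\mathscr{B}_n|=\log\mathcal{N}(\mathscr{A}_0^n|_{\Omega})$, and dividing by $n$ finishes the proof.

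The specific problem with your sketch is the inequality you need after the block decomposition. You want $\prod_i r_{\ell_i}(\,\cdot\,,\Omega,\eta)\le r_n(\,\cdot\,,\Omega,\eta')$, but subadditivity of $\log r$ goes the other way: composing spanning sets gives $r_{a+b}\le r_a\cdot r_b$, hence $\prod_i r_{\ell_i}\ge r_{\sum_i\ell_i}$, which is useless here. Worse, in the nonautonomous setting the spanning number on a block $[t,t+\ell)$ is governed by the shifted sequence $f_{t,\infty}$, and by Lemma~\ref{131} one has $h(f_{t,\infty})\ge h(f_{0,\infty})$ with possibly strict inequality; so even a termwise bound $r_{\ell_i}(f_{t_i,\infty},\Omega,\eta)\le e^{\ell_i(h(f_{0,\infty},\Omega)+\varepsilon)}$ is not available. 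The step ``replace each orbit segment by a nearby point of $\Omega(f_{0,\infty})$'' is also unjustified: being $\eta$-close to $\Omega$ at every time in a block does not produce a single orbit in $\Omega$ that shadows the segment, and $\Omega(f_{0,\infty})$ is not known to be invariant under each $f_n$. These are precisely the difficulties the paper's cover-based argument sidesteps by working with the iterated system and invoking Proposition~\ref{21} only once.
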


\begin{proof}

It is only to show that $h(f_{0,\infty})\leq h(f_{0,\infty},\Omega(f_{0,\infty}))$ since ``$\geq$" is trivial.
Let $\mathscr{A}$ be an open cover of $X$. Fix $n\geq1$. Let $\mathscr{B}_n$ be a minimal subcover of $\Omega(f_{0,\infty})$
which is chosen from ${\mathscr{A}}_{0}^{n}(f_{0,\infty})$. Denote $K=X\setminus\cup_{B\in\mathscr{B}_n}B$. Then $K$ is compact and all the points in $K$ are wandering. For any $x\in K$, there exists $A_x\in{\mathscr{A}}_{0}^{n}(f_{0,\infty})$ such that $x\in A_x$. Since $x$ is wandering, there exists an open neighborhood $U_{x}$ of $x$ such that $U_{x}$ is a wandering set and $U_{x}\subset A_x$. The open cover $\{U_x\cap K: x\in K\}$ of $K$ has a finite subcover $\{U_{x_i}\cap K: 1\leq i\leq l\}$ for some $l\geq1$.
Denote $\mathscr{C}_n=\mathscr{B}_n\cup\{U_{x_1}\cap K,\cdots,U_{x_l}\cap K\}$. Then $\mathscr{C}_n$ is an open cover of $X$ and
${\mathscr{A}}_{0}^{n}(f_{0,\infty})\prec\mathscr{C}_n$. For any $k\geq2$, we consider any nonempty element of $(\mathscr{C}_n)_{0}^{k}(f_{0,\infty}^{n})$,  it is of the form
\[C_{0}\cap f_{0}^{-n}(C_{1})\cap\cdots\cap f_{0}^{-n}\circ f_{n}^{-n}\circ\cdots\circ f_{(k-2)n}^{-n}(C_{k-1}),\]
where $C_i\in\mathscr{C}_n$ for any $0\leq i\leq k-1$.
If $C_i=C_j$ for some $i<j$, then
\[\emptyset\neq f_{(j-1)n}^{n}\circ\cdots\circ f_{in}^{n}(C_{i})\cap C_i=f_{in}^{(j-i)n}(C_i)\cap C_i,\]
which implies that $C_i$ is not wandering, and thus $C_i\in\mathscr{B}_n$.
With the same method used in the proof of Lemma 4.1.5 in \cite{Alse93}, one shows that
\[|(\mathscr{C}_n)_{0}^{k}(f_{0,\infty}^{n})|\leq(m+1)!k^m|\mathscr{B}_n|^k,\]
where $|\mathscr{B}_n|$ denotes the number of elements in $\mathscr{B}_n$ and
$m=|\mathscr{C}_n\setminus\mathscr{B}_n|$. Thus,
\[h(f_{0,\infty}^n,\mathscr{C}_n)
=\limsup_{k\to\infty}\frac{1}{k}\log\mathcal{N}\big((\mathscr{C}_n)_{0}^{k}(f_{0,\infty}^{n})\big)
\leq\limsup_{k\to\infty}\frac{1}{k}\log((m+1)!k^m|\mathscr{B}_n|^k)=\log{\mathscr{B}_n}.\]
For any $\epsilon>0$, there exists an open cover $\mathscr{A}$ of $X$ such that
\[h(f_{0,\infty}^n)<h(f_{0,\infty}^n,\mathscr{A})+\epsilon.\]
This, together with Proposition \ref{21}, implies that
\begin{align*}
h(f_{0,\infty})&=\frac{1}{n}h(f_{0,\infty}^n)<\frac{1}{n}h(f_{0,\infty}^n,\mathscr{A})+\frac{\epsilon}{n}
\leq\frac{1}{n}h\big(f_{0,\infty}^n,{\mathscr{A}}_{0}^{n}(f_{0,\infty})\big)+\frac{\epsilon}{n}\\
&\leq\frac{1}{n}h(f_{0,\infty}^n,\mathscr{C}_n)+\frac{\epsilon}{n}
\leq\frac{1}{n}\log{|\mathscr{B}_n|}+\frac{\epsilon}{n}
=\frac{1}{n}\log\mathcal{N}({\mathscr{A}}_{0}^{n}(f_{0,\infty})|_{\Omega(f_{0,\infty})})+\frac{\epsilon}{n}.
\end{align*}
Therefore,
$h(f_{0,\infty})\leq h\big(f_{0,\infty},\Omega(f_{0,\infty})\big)$.
\end{proof}

\begin{lemma}\cite{Kolyada96}\label{131}
Let $f_{0,\infty}$ be a sequence of continuous self-maps of a compact topological space $X$. Then for every $1\leq i\leq j$
and every open cover $\mathscr{A}$ of $X$, $h(f_{i,\infty},\mathscr{A})\leq h(f_{j,\infty},\mathscr{A})$ and
$h(f_{i,\infty})\leq h(f_{j,\infty})$.
\end{lemma}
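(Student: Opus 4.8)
The plan is to prove the one-step inequality $h(f_{i,\infty},\mathscr{A})\le h(f_{i+1,\infty},\mathscr{A})$ and then iterate it $j-i$ times; everything takes place at the level of open covers, so no uniform structure is needed. The key is the identity
\[\mathscr{A}_{i}^{n}(f_{0,\infty})=\mathscr{A}\vee f_{i}^{-1}\big(\mathscr{A}_{i+1}^{n-1}(f_{0,\infty})\big),\qquad n\ge1,\]
which I would obtain by writing $f_{i}^{k}=f_{i+1}^{k-1}\circ f_{i}$ for $k\ge1$, hence $f_{i}^{-k}(\mathscr{A})=f_{i}^{-1}\big(f_{i+1}^{-(k-1)}(\mathscr{A})\big)$, and then separating off the $k=0$ term of the join $\bigvee_{k=0}^{n-1}f_{i}^{-k}(\mathscr{A})$ and pulling the common factor $f_{i}^{-1}$ outside the remaining join.

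Next I would invoke the two elementary properties of the covering number $\mathcal{N}$ on a compact space: $\mathcal{N}(\mathscr{B}\vee\mathscr{C})\le\mathcal{N}(\mathscr{B})\,\mathcal{N}(\mathscr{C})$ (intersect minimal subcovers of $\mathscr{B}$ and of $\mathscr{C}$), and $\mathcal{N}\big(f^{-1}(\mathscr{C})\big)\le\mathcal{N}(\mathscr{C})$ for any continuous $f:X\to X$ (the preimages of a subcover of $\mathscr{C}$ form a subcover of $f^{-1}(\mathscr{C})$, since $f^{-1}(X)=X$). Applying both to the identity above gives
\[\mathcal{N}\big(\mathscr{A}_{i}^{n}(f_{0,\infty})\big)\le\mathcal{N}(\mathscr{A})\cdot\mathcal{N}\big(\mathscr{A}_{i+1}^{n-1}(f_{0,\infty})\big).\]
Taking $\frac1n\log$ of both sides, the term $\frac1n\log\mathcal{N}(\mathscr{A})\to0$, while for $n\ge2$ one has $\frac1n\log\mathcal{N}\big(\mathscr{A}_{i+1}^{n-1}(f_{0,\infty})\big)=\frac{n-1}{n}\cdot\frac{1}{n-1}\log\mathcal{N}\big(\mathscr{A}_{i+1}^{n-1}(f_{0,\infty})\big)$, whose $\limsup$ is at most $h(f_{i+1,\infty},\mathscr{A})$ since $\frac{n-1}{n}\le1$ and $\mathcal{N}\ge1$. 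Hence $h(f_{i,\infty},\mathscr{A})\le h(f_{i+1,\infty},\mathscr{A})$.

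Iterating, $h(f_{i,\infty},\mathscr{A})\le h(f_{i+1,\infty},\mathscr{A})\le\cdots\le h(f_{j,\infty},\mathscr{A})$ for every open cover $\mathscr{A}$ and all $1\le i\le j$, and taking the supremum over all open covers $\mathscr{A}$ yields $h(f_{i,\infty})\le h(f_{j,\infty})$. I do not anticipate a genuine obstacle: the content is the index bookkeeping in the first identity together with the routine submultiplicativity and monotonicity of $\mathcal{N}$, and the only subtlety is recording that these facts are purely topological, using compactness to guarantee finite subcovers but no separation or uniformity hypotheses.
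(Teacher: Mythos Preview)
Your argument is correct and is precisely the standard proof from Kolyada--Snoha: the paper itself does not prove this lemma but only quotes it from \cite{Kolyada96}, so there is no in-paper proof to compare against. The decomposition $\mathscr{A}_{i}^{n}=\mathscr{A}\vee f_{i}^{-1}(\mathscr{A}_{i+1}^{n-1})$ together with submultiplicativity of $\mathcal{N}$ under $\vee$ and monotonicity under preimages is exactly how the cited reference establishes the one-step inequality, and your handling of the $\limsup$ is clean.
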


Based on Lemma \ref{131}, the concept of asymptotical topological entropy is introduced for $f_{0,\infty}$ on a compact
topological space in \cite{Kolyada96}. Define
\[h^{*}(f_{\infty})=\sup\{h^{*}(f_{\infty},\mathscr{A}): \mathscr{A}\; {\rm is\; an\; open\; cover\; of\;} X\;\}\]
as the asymptotical topological entropy of  $(X,f_{0,\infty})$, where
\[h^{*}(f_{\infty},\mathscr{A})=\lim_{n\to\infty}h(f_{n,\infty},\mathscr{A}).\]
It is easy to see that
\[h^{*}(f_{\infty})=\lim_{n\to\infty}h(f_{n,\infty}).\]

Recall that a sequence of continuous self-maps $\{f_n\}_{n=0}^{\infty}$ on a compact uniform space $(X,\mathcal{U})$ converges uniformly to $f$ if for any $\gamma\in\mathcal{U}$, there exists $N\geq1$ such that $(f_n(x),f(x))\in\gamma$ for any $x\in X$ and $n\geq N$.

\begin{theorem}\label{s}
Let $f_{0,\infty}$ be a sequence of continuous self-maps on a compact uniform space $(X,\mathcal{U})$.
If $f_{0,\infty}$ converges uniformly to $f$, then
\[h(f_{0,\infty})\leq \cdots\leq h(f_{n,\infty})\leq\cdots\leq h^{*}(f_{\infty})\leq h(f),\;n\geq1.\]
\end{theorem}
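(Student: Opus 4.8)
The chain of inequalities $h(f_{0,\infty})\leq h(f_{1,\infty})\leq\cdots\leq h(f_{n,\infty})\leq\cdots$ is immediate from Lemma \ref{131} (applied with $i=0,1,2,\dots$ against any fixed cover $\mathscr{A}$, then taking suprema). Likewise $h^{*}(f_{\infty})=\lim_{n\to\infty}h(f_{n,\infty})$ is the stated consequence of Lemma \ref{131}, and since the sequence $h(f_{n,\infty})$ is nondecreasing, every $h(f_{n,\infty})$ is bounded above by its limit $h^{*}(f_{\infty})$. So the only substantive claim is the last inequality $h^{*}(f_{\infty})\leq h(f)$; equivalently, since $h(f_{n,\infty})\nearrow h^{*}(f_{\infty})$, it suffices to show $h(f_{n,\infty})\leq h(f)$ for every $n$, or even just to bound the tail entropies by $h(f)$ uniformly.

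The plan is to compare spanning sets of the shifted system $f_{n,\infty}$ with those of the autonomous system $f$, exploiting uniform convergence. Fix $\gamma\in\mathcal{U}^{s,o}$ and choose, via axiom (v) of a uniform structure, a symmetric open $\eta$ with $\eta^{m}\subset\gamma$ for a suitable power $m$ (in fact $\eta^{2}\subset\gamma$ will already drive the key step, but one may need to iterate the triangle-type estimate along an orbit, which is where the real work is). The idea is: given $\varepsilon>0$ and a tolerance $\gamma$, uniform convergence gives $N$ such that $(f_k(x),f(x))\in\eta$ for all $x\in X$ and all $k\geq N$; for the shifted system $f_{N,\infty}$, one then wants to argue that a set which $(k,\eta)$-spans $X$ under the autonomous map $f$ also $(k,\gamma)$-spans $X$ under $f_{N,\infty}$, after possibly shrinking $\eta$ further using the (uniform) continuity of $f$. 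The difficulty is that the compositions $f_N^j=f_{N+j-1}\circ\cdots\circ f_N$ and $f^j$ differ by an accumulation of $j$ small errors, so controlling $(f_N^j(x),f^j(x))$ uniformly in $j$ requires choosing $\eta$ so that $f$ does not expand $\eta$-closeness too badly over arbitrarily many iterates — which is not automatic for a general continuous $f$.

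To get around this, I would argue at the level of the entropy quantity rather than orbit-by-orbit: pass to the cover formulation (Proposition \ref{2}), where for any open cover $\mathscr{A}$ one compares $\mathcal{N}\big(\mathscr{A}_N^k(f_{N,\infty})\big)$ with $\mathcal{N}\big(\mathscr{A}_0^k(f)\big)$. Here $\mathscr{A}_N^k(f_{N,\infty})=\bigvee_{j=0}^{k-1}(f_N^j)^{-1}(\mathscr{A})$ and $\mathscr{A}_0^k(f)=\bigvee_{j=0}^{k-1}(f^j)^{-1}(\mathscr{A})$. For a fixed finite $k$, uniform convergence plus continuity let one choose $N$ large enough that $f_N^j$ and $f^j$ are ``$\mathscr{A}$-indistinguishable'' for $0\leq j\leq k-1$ in the sense that $(f_N^j)^{-1}(\mathscr{A})$ refines a common refinement built from $(f^j)^{-1}$ of a slightly finer cover; but since $k$ must go to infinity to compute entropy while $N$ must be chosen after $k$, this requires a diagonal argument and is exactly the main obstacle. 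The cleanest route is probably the one in \cite{Kolyada96} (their treatment of limits of nonautonomous systems): use Lemma \ref{131} to reduce to bounding $h^{*}(f_\infty)$, then for each $\varepsilon$ pick a cover $\mathscr{A}$ nearly realizing $h^{*}(f_\infty,\mathscr{A})$, fix a large $k$, and use uniform convergence to push $h(f_{N,\infty},\mathscr{A})$ within $\varepsilon$ of $h(f,\mathscr{A})\leq h(f)$; the Lebesgue covering lemma (Lemma \ref{Lebesgue covering lemma}) converts the cover $\mathscr{A}$ into an entourage $\gamma$ so that ``$f_N^j(x)$ and $f^j(x)$ lie in a common member of $\mathscr{A}$'' becomes an $\eta$-closeness condition one can propagate. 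I expect the estimate $h(f_{N,\infty},\mathscr{A})\leq h(f,\mathscr{A}')$ for a cover $\mathscr{A}'$ slightly coarser than $\mathscr{A}$, combined with $\varepsilon\to 0$ and $\mathscr{A}'\to\mathscr{A}$, to close the argument; the bookkeeping of the two interleaved limits ($k\to\infty$ versus $N\to\infty$) is the step that needs care.
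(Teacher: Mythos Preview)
Your reduction of the chain $h(f_{0,\infty})\le h(f_{1,\infty})\le\cdots\le h^{*}(f_{\infty})$ to Lemma \ref{131} is fine. The gap is in the final inequality $h^{*}(f_{\infty})\le h(f)$. You correctly isolate the obstruction --- for a \emph{fixed} tail index $N$ one must control $\mathcal{N}\big(\mathscr{A}_{N}^{k}(f_{N,\infty})\big)$ (or the corresponding spanning/separated numbers) for \emph{all} large $k$, whereas uniform convergence only makes $f_{N}^{j}$ close to $f^{j}$ for $j$ in a range that depends on $N$ --- but you do not resolve it. The sentence ``fix a large $k$, and use uniform convergence to push $h(f_{N,\infty},\mathscr{A})$ within $\varepsilon$ of $h(f,\mathscr{A})$'' is exactly the step that fails: choosing $N$ after $k$ gives information about $r_{k}(f_{N,\infty},X,\gamma)$ only at that one $k$, not about the $\limsup$ over $k$ at the fixed $N$. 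This is not bookkeeping; it is the whole difficulty, and no amount of refining $\mathscr{A}$ to $\mathscr{A}'$ fixes it, because the error between $f_{N}^{j}$ and $f^{j}$ is not controlled uniformly in $j$ for general continuous $f$.

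The paper (following \cite{Kolyada96}, whose argument you cite but do not actually describe) avoids this entirely by an \emph{autonomization} trick. One takes a convergent sequence $x_{n}\to z$ in $[0,1]$ and defines a single continuous map $F$ on the compact space $(\{x_{n}\}_{n\ge 0}\cup\{z\})\times X$ by $F(x_{n},y)=(x_{n+1},f_{n}(y))$ and $F(z,y)=(z,f(y))$; uniform convergence of $f_{n}$ to $f$ is precisely what makes $F$ continuous. Now $\Omega(F)\subset\{z\}\times X$, so the classical (autonomous) identity $h(F)=h(F,\Omega(F))$ gives $h(F,\{x_{n}\}\times X)\le h(F)=h(F,\{z\}\times X)$ for every $n$. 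Finally one checks directly from the definitions that $h(F,\{x_{n}\}\times X)=h(f_{n,\infty})$ and $h(F,\{z\}\times X)=h(f)$, which yields $h(f_{n,\infty})\le h(f)$ for all $n$ and hence $h^{*}(f_{\infty})\le h(f)$. The point is that the limit $k\to\infty$ is absorbed into the entropy of the single autonomous map $F$, and the non-wandering set result handles the comparison without ever tracking orbit errors.
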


\begin{proof}
Let $\{x_n\}_{n=0}^{\infty}\subset[0,1]$ be a sequence of mutually different points converging to a point $z$.
Define a continuous self-map $F$ on $(\{x_n\}_{n=0}^{\infty}\cup\{z\})\times X$ by
\[F(z,y)=(z,f(y)),\;y\in X,\]
and
\[F(x_n,y)=(x_{n+1},f_n(y)),\;y\in X,\;n\geq0.\]
It is easy to see that $\Omega(F)\subset\{z\}\times X$. Then
\[h(F)=h(F,\Omega(F))=h(F,\{z\}\times X).\]
Thus,
\begin{align}\label{113}
h(F,\{x_n\}\times X)\leq h(F,\{z\}\times X),\;n\geq0.
\end{align}
It follows from the definitions of topological entropy that
\[h(F,\{x_n\}\times X)=h(f_{n,\infty}),\;n\geq0,\]
and
\[h(F,\{z\}\times X)=h(f).\]
This, together with (\ref{113}), implies that
\[h(f_{n,\infty})\leq h(f),\;n\geq0.\]
Hence,
\[h^{*}(f_{\infty})=\lim_{n\to\infty}h(f_{n,\infty})\leq h(f).\]
\end{proof}

\begin{remark}
Theorems \ref{h} and \ref{s} are generalizations of Theorems H and E in \cite{Kolyada96} to a uniform space.
\end{remark}

Let $\mathscr{K}(X)$ be the hyperspace of all nonempty compact subsets of $X$.
Recall that a Borel probability measure $\mu$ on $X$ is $f_{0,\infty}$-homogeneous, if

{\rm (i)} $\mu(K)<\infty$ for all $K\in\mathscr{K}(X)$;

{\rm (ii)} $\mu(K)>0$ for some $K\in\mathscr{K}(X)$;

{\rm (iii)} for any $\gamma\in{\mathcal{U}}^{s,o}$, there exist $\eta\in{\mathcal{U}}^{s,o}$ and $c>0$ such that
for any $x,y\in X$ and $n\geq1$,
\begin{align}\label{ying}
\mu(D_n(f_{0,\infty},y,\eta))\leq c\mu(D_n(f_{0,\infty},x,\gamma)),
\end{align}
where
\[D_n(f_{0,\infty},x,\gamma)=\bigcap_{k=0}^{n-1}f_{0}^{-k}(\gamma[f_{0}^{k}(x)]).\]
For such a $\mu$, define
\begin{align}\label{ooo}
k(f_{0,\infty},\mu)=\lim_{\eta\in{\mathcal{U}}^{s,o}}\limsup_{n\to\infty}-\frac{1}{n}\log \mu\big(D_n(f_{0,\infty},y,\eta)\big).
\end{align}
Note that $k(f_{0,\infty},\mu)$ does not depend on $y$ used by (\ref{ying}).
Then we have the following calculation of topological entropy.

\begin{theorem}\label{h1l}
Let $f_{0,\infty}$ be a sequence of continuous self-maps on a compact uniform space $(X,\mathcal{U})$.
If there exists a Borel probability measure $\mu$ on $X$ which is $f_{0,\infty}$-homogeneous, then
\[h(f_{0,\infty})=k(f_{0,\infty},\mu),\]
where $k(f_{0,\infty},\mu)$ is specified in {\rm(\ref{ooo})}.
\end{theorem}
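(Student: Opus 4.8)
The plan is to mimic the classical Bowen-type argument showing that for a homogeneous measure the metric "local entropy" coincides with the topological entropy, now carried out with the dynamical balls $D_n(f_{0,\infty},x,\gamma)=\bigcap_{k=0}^{n-1}f_0^{-k}(\gamma[f_0^k(x)])$ replacing metric balls and with nets over $\gamma\in{\mathcal U}^{s,o}$ replacing $\varepsilon\to 0$ limits. Two directions must be established: $h(f_{0,\infty})\le k(f_{0,\infty},\mu)$ and $h(f_{0,\infty})\ge k(f_{0,\infty},\mu)$. Throughout I would use that $D_n(f_{0,\infty},x,\gamma)$ is exactly the set of points that a single point $x$ "$(n,\gamma)$-shadows," so that a subset $F\subset X$ $(n,\gamma)$-spans $X$ iff $\{D_n(f_{0,\infty},x,\gamma):x\in F\}$ covers $X$, and a set $E$ is $(n,\gamma)$-separated iff no member of $\{D_n(f_{0,\infty},x,\gamma/\!\!/2):x\in E\}$-type balls can contain two points of $E$ — more precisely, if $\eta^2\subset\gamma$ then distinct points of an $(n,\gamma)$-separated set lie in distinct sets $D_n(f_{0,\infty},\cdot,\eta)$.

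For the inequality $h(f_{0,\infty})\le k(f_{0,\infty},\mu)$: fix $\gamma\in{\mathcal U}^{s,o}$, choose $\eta$ and $c$ as in the homogeneity condition \eqref{ying}, and let $E$ be an $(n,\gamma)$-separated set in $X$ of maximal cardinality $s_n(f_{0,\infty},X,\gamma)$. Pick a symmetric open $\zeta$ with $\zeta^2\subset\eta$. Then for distinct $x,y\in E$ the balls $D_n(f_{0,\infty},x,\zeta)$ and $D_n(f_{0,\infty},y,\zeta)$ are disjoint (otherwise a common point would force $(f_0^k(x),f_0^k(y))\in\zeta^2\subset\eta\subset\gamma$ for all $k<n$, contradicting separation; here one takes $\eta\subset\gamma$ by further shrinking, using Proposition \ref{1}(i) monotonicity to keep $s_n$ comparable). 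Hence
\[
1=\mu(X)\ \ge\ \sum_{x\in E}\mu\big(D_n(f_{0,\infty},x,\zeta)\big)\ \ge\ |E|\cdot\inf_{x\in X}\mu\big(D_n(f_{0,\infty},x,\zeta)\big),
\]
and by homogeneity the infimum is bounded below by $c^{-1}\mu(D_n(f_{0,\infty},y,\gamma'))$ for a fixed reference point $y$ and suitable $\gamma'$ (rename indices). Taking $\frac1n\log$, $\limsup_n$, and then the net limit over the entourages yields $h(f_{0,\infty})\le k(f_{0,\infty},\mu)$, using that $k(f_{0,\infty},\mu)$ in \eqref{ooo} is independent of $y$.

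For the reverse inequality $h(f_{0,\infty})\ge k(f_{0,\infty},\mu)$: fix $\gamma$, let $F$ be a set $(n,\gamma)$-spanning $X$ with $|F|=r_n(f_{0,\infty},X,\gamma)$, so $X=\bigcup_{x\in F}D_n(f_{0,\infty},x,\gamma)$. Fix a reference point $y$ and a small $\zeta$ with $\zeta\circ\zeta\subset$ the $\eta$ attached to $\gamma$; one shows that any $D_n(f_{0,\infty},x,\gamma)$ is covered by boundedly many — in fact, by the homogeneity-style comparison, by a number independent of $x$ and $n$ — translates of $D_n(f_{0,\infty},\cdot,\zeta)$, OR more directly one argues $\mu(D_n(f_{0,\infty},x,\gamma))\le c\,\mu(D_n(f_{0,\infty},y,\eta'))$ for all $x$, giving
\[
1=\mu(X)\ \le\ \sum_{x\in F}\mu\big(D_n(f_{0,\infty},x,\gamma)\big)\ \le\ r_n(f_{0,\infty},X,\gamma)\cdot c\,\mu\big(D_n(f_{0,\infty},y,\eta')\big).
\]
Therefore $\frac1n\log r_n(f_{0,\infty},X,\gamma)\ge -\frac1n\log\mu(D_n(f_{0,\infty},y,\eta'))-\frac1n\log c$; taking $\limsup_n$ and then the net limit over $\gamma$ (which forces $\eta'$ through a cofinal family as well, by the definition of the homogeneous-measure constants) gives $h(f_{0,\infty})\ge k(f_{0,\infty},\mu)$. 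Combining the two inequalities proves the theorem.

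The main obstacle I anticipate is the careful bookkeeping of the nested entourages and the two interlocking net limits: each application of homogeneity \eqref{ying} introduces a new entourage $\eta$ depending on $\gamma$, and one must check that as $\gamma$ runs through the directed set ${\mathcal U}^{s,o}$ the associated $\eta$'s also run through a cofinal subfamily, so that the net limit defining $k(f_{0,\infty},\mu)$ is genuinely recovered and not just bounded by a sub-net. This is exactly where the autonomous proof is "obvious" (one just sends $\varepsilon\to0$) but the uniform-space version needs the remark, already used implicitly before \eqref{ooo}, that $k(f_{0,\infty},\mu)$ is well defined as a net limit independent of the reference point; I would isolate a short lemma asserting that for homogeneous $\mu$ the quantities $\limsup_n -\frac1n\log\mu(D_n(f_{0,\infty},y,\eta))$ form a monotone (hence convergent) net whose limit does not depend on $y$, and then the two estimates above slot in directly. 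The disjointness/covering steps and the measure summation are routine once the entourage hierarchy is fixed.
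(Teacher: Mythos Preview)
Your overall strategy is exactly the one in the paper: bound $s_n$ from above via disjointness of the dynamical balls $D_n(\cdot,\eta)$ for an $(n,\gamma)$-separated set (with $\eta^2\subset\gamma$), and bound $r_n$ from below via the covering $\bigcup D_n(\cdot,\eta)\supset K$ for a spanning set; homogeneity then converts both estimates into a comparison with $-\tfrac1n\log\mu(D_n(y,\cdot))$ at a fixed reference point. So the proposal is correct in spirit and matches the paper's route.

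There is, however, one genuine slip in your second direction that is precisely the ``cofinality'' obstacle you flag at the end. You span with $\gamma$ and then seek $\eta'$ with $\mu(D_n(x,\gamma))\le c\,\mu(D_n(y,\eta'))$; but homogeneity (\ref{ying}) only produces, for a \emph{given} large entourage, a smaller one on the left, not the reverse. The fix (and this is what the paper does) is to swap the roles: apply homogeneity to the given $\gamma$ to obtain $\eta$ and $c$ with $\mu(D_n(x,\eta))\le c\,\mu(D_n(y,\gamma))$, then take $F$ to be an $(n,\eta)$-spanning set (not $(n,\gamma)$-spanning). The covering estimate becomes $\mu(K)\le c\,r_n(f_{0,\infty},K,\eta)\,\mu(D_n(y,\gamma))$, so $\bar r(f_{0,\infty},K,\eta)\ge \limsup_n\bigl(-\tfrac1n\log\mu(D_n(y,\gamma))\bigr)$. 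Now the net limit in $\gamma$ on the right gives $k(f_{0,\infty},\mu)$ directly, while the left is dominated by $h(f_{0,\infty},K)\le h(f_{0,\infty})$ for \emph{every} $\eta$; no cofinality of the $\eta$'s is needed. With this correction your argument coincides with the paper's. (Your use of $X$ in place of the paper's compact $K$ with $\mu(K)>0$ is harmless here, since $X$ itself is compact with $\mu(X)=1$.) In the first direction your extra layer $\gamma\to\eta\to\zeta$ is unnecessary: just take $\eta$ with $\eta^2\subset\gamma$ and apply homogeneity to $\eta$ to get $\gamma'$, exactly as the paper does.
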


\begin{proof}
Let $\gamma\in{\mathcal{U}}^{s,o}$, $n\geq1$ and $E\subset X$ be an $(n,\gamma)$-separated set under $f_{0,\infty}$ with $|E|=s_n(f_{0,\infty},X,\gamma)$. Clearly, there exists $\eta\in{\mathcal{U}}^{s,o}$ such that $\eta^2\subset\gamma$.
It is easy to verify that
\[D_n(f_{0,\infty},x_1,\eta)\cap D_n(f_{0,\infty},x_2,\eta)=\emptyset \;{\rm if}\;x_1\neq x_2\in E.\]
Since $\mu$ is $f_{0,\infty}$-homogeneous, there exists $\gamma'\in{\mathcal{U}}^{s,o}$ and $c>0$ such that for all $x,y\in X$,
\[\mu(D_n(f_{0,\infty},y,\gamma'))\leq c\mu(D_n(f_{0,\infty},x,\eta)).\]
Then
\begin{align*}
s_n(f_{0,\infty},X,\gamma)\mu(D_n(f_{0,\infty},y,\gamma'))&\leq c\sum_{x\in E}\mu(D_n(f_{0,\infty},x,\eta))\\
&=c\mu\big(\bigcup_{x\in E}D_n(f_{0,\infty},x,\eta)\big)\leq c\mu(X).
\end{align*}
By the fact that $\mu(X)<\infty$, one gets that
\[\limsup_{n\to\infty}\frac{1}{n}\log s_n(f_{0,\infty},X,\gamma)\leq\limsup_{n\to\infty}-\frac{1}{n}\log\mu(D_n(f_{0,\infty},y,\gamma')).\]
Then
\[\lim_{\gamma\in{\mathcal{U}}^{s,o}}\limsup_{n\to\infty}\frac{1}{n}\log s_n(f_{0,\infty},X,\gamma)\leq\lim_{\gamma\in{\mathcal{U}}^{s,o}}\limsup_{n\to\infty}-\frac{1}{n}\log\mu(D_n(f_{0,\infty},y,\gamma')),\]
which implies that
$h(f_{0,\infty})\leq k(\mu,f_{0,\infty})$.

On the other hand, let $\gamma\in{\mathcal{U}}^{s,o}$ and $K\in\mathscr{K}(X)$ be the set satisfying that $\mu(K)>0$. Then there exist $\eta\in{\mathcal{U}}^{s,o}$ and $c>0$ such that for all $n\geq1$ and $x,y\in X$,
\[\mu(D_n(f_{0,\infty},x,\eta))\leq c\mu(D_n(f_{0,\infty},y,\gamma)).\]
Let $n\geq1$ and $F$ be a subset of $K$ which $(n,\eta)$-spans $K$ with $|F|=r_n(f_{0,\infty},K,\eta)$. Then
\[K\subset\bigcup_{x\in F}D_n(f_{0,\infty},x,\eta).\]
Thus,
\begin{align*}
0<\mu(K)\leq\mu(\bigcup_{x\in F}D_n(f_{0,\infty},x,\eta))
\leq\sum_{x\in F}\mu(D_n(f_{0,\infty},x,\eta))
\leq cr_n(f_{0,\infty},K,\eta)\mu(D_n(f_{0,\infty},y,\gamma)),
\end{align*}
which implies that
\[\limsup_{n\to\infty}\frac{1}{n}\log r_n(f_{0,\infty},K,\eta)\geq\limsup_{n\to\infty}-\frac{1}{n}\log\mu(D_n(f_{0,\infty},y,\gamma)).\]
So,
\[\lim_{\gamma\in{\mathcal{U}}^{s,o}}\limsup_{n\to\infty}\frac{1}{n}\log r_n(f_{0,\infty},\eta,K)\geq\lim_{\gamma\in{\mathcal{U}}^{s,o}}\limsup_{n\to\infty}-\frac{1}{n}\log\mu(D_n(f_{0,\infty},y,\gamma)).\]
Hence,
\[h(f_{0,\infty})\geq h(f_{0,\infty},K)\geq k(\mu,f_{0,\infty}).\]
\end{proof}

\begin{remark}
Theorem \ref{h1l} extends Proposition 7 in \cite{Bowen71} to a nonautonomous dynamical system generated by a sequence of continuous seif-maps on a compact uniform space.
\end{remark}

\section{Topological equi-semi-conjugacy}

In this section, the relations of topological entropy between two topologically equi-semicon- jugate nonautonomous
dynamical systems are studied.

The sequence $\{\Lambda_n\}_{n=0}^{\infty}$ of subsets of $X$ is called invariant under $(X,f_{0,\infty})$
if $f_n(\Lambda_n)\subset\Lambda_{n+1}$ for all $n\geq0$. Then $(X,f_{0,\infty})$ restricted to $\{\Lambda_n\}_{n=0}^{\infty}$
is called the invariant subsystem of $(X,f_{0,\infty})$ on $\{\Lambda_n\}_{n=0}^{\infty}$.

\begin{definition}
Let $\{\Lambda_n\}_{n=0}^{\infty}$ and $\{D_n\}_{n=0}^{\infty}$ be invariant under $(X,f_{0,\infty})$ and $(Y,g_{0,\infty})$,
respectively. If, for each $n\geq0$, there exists a surjective map $h_n:\Lambda_n\to D_n$ such that $h_{n+1}\circ f_n=g_n\circ h_n$,
and the sequence of maps $\{h_n\}_{n=0}^{\infty}$ is equi-continuous, then the invariant subsystem of $(X,f_{0,\infty})$ on $\{\Lambda_n\}_{n=0}^{\infty}$ is said to be topologically $\{h_n\}_{n=0}^{\infty}$-equi-semiconjugate to the invariant subsystem
of $(Y,g_{0,\infty})$ on $\{D_n\}_{n=0}^{\infty}$. Furthermore, if $\{h_n^{-1}\}_{n=0}^{\infty}$ is also equi-continuous,
they are said to be topologically $\{h_n\}_{n=0}^{\infty}$-equi-conjugate.
In the case that $\Lambda_n=X$ and $D_n=Y$ for each $n\geq0$, $(X,f_{0,\infty})$ is said to be topologically $\{h_n\}_{n=0}^{\infty}$-equi-(semi)conjugate to $(Y,g_{0,\infty})$.
If there exists $c>0$ such that $\sup_{y\in Y}|h_{n}^{-1}(y)|\leq c$ for any $n\geq0$, then $\{h_n\}_{n=0}^{\infty}$
is called finite-to-one.
\end{definition}

Recall that $\{h_n\}_{n=0}^{\infty}$ is equi-continuous if for any $\gamma\in{\mathcal{V}}^{s,o}$, there exists $\eta\in{\mathcal{U}}^{s,o}$ such that $(h_n(x),h_n(y))\in \gamma$ for each $n\geq0$ and any $x,y\in\Lambda_n$ with $(x,y)\in\eta$.

\begin{remark}
It is well known that if two autonomous dynamical systems are topologically conjugate, then their topological properties are all the same. However, this is not necessarily true for nonautonomous dynamical systems, even if two systems are topologically equi-conjugate,
see example 3.1 in \cite{Shi09}. Fortunately, topological entropy is an invariant under topological equi-conjugacy.
\end{remark}

\begin{theorem}\label{277}
Let $f_{0,\infty}$ and $g_{0,\infty}$ be two sequences of continuous self-maps on compact uniform spaces $(X,\mathcal{U})$
and $(Y,\mathcal{V})$, respectively, $\Lambda_n\subset X$ and $D_n\subset Y$ for each $n\geq0$. Assume that there exists
a sequence of maps $h_n: \Lambda_n\to D_n$, $n\geq0$, such that the invariant subsystem of $(X,f_{0,\infty})$ on
$\{\Lambda_n\}_{n=0}^{\infty}$ is topologically $\{h_n\}_{n=0}^{\infty}$-equi-semiconjugate to the invariant subsystem
of $(Y,g_{0,\infty})$ on $\{D_n\}_{n=0}^{\infty}$. Then
\[h(f_{0,\infty},\Lambda_0)\geq h(g_{0,\infty},D_0).\]
Consequently, if the invariant subsystem of $(X,f_{0,\infty})$ on $\{\Lambda_n\}_{n=0}^{\infty}$ is topologically $\{h_n\}_{n=0}^{\infty}$-equi-conjugate to the invariant subsystem of $(Y,g_{0,\infty})$ on $\{D_n\}_{n=0}^{\infty}$,  then
\[h(f_{0,\infty},\Lambda_0)=h(g_{0,\infty},D_0).\]
\end{theorem}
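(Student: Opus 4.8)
The plan is to adapt to the nonautonomous, uniform-space setting the classical estimate by which a semiconjugacy cannot increase topological entropy, working with spanning sets so as to exploit surjectivity of the $h_n$'s. The first step is to record the iterated intertwining identity: from $h_{n+1}\circ f_n=g_n\circ h_n$ and the invariance $f_n(\Lambda_n)\subset\Lambda_{n+1}$, an induction on $j$ gives $h_j\circ f_0^j=g_0^j\circ h_0$ on $\Lambda_0$ for every $j\geq0$, each composite being well defined because $f_0^j(\Lambda_0)\subset\Lambda_j$.

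Next, fix $\gamma\in\mathcal{V}^{s,o}$. Equi-continuity of $\{h_n\}_{n=0}^{\infty}$ supplies $\eta=\eta(\gamma)\in\mathcal{U}^{s,o}$ with $(x,y)\in\eta\Rightarrow(h_n(x),h_n(y))\in\gamma$ for all $n\geq0$. Take $F\subset\Lambda_0$ that $(n,\eta)$-spans $\Lambda_0$ under $f_{0,\infty}$ with $|F|=r_n(f_{0,\infty},\Lambda_0,\eta)$. Then $h_0(F)\subset D_0$ $(n,\gamma)$-spans $D_0$ under $g_{0,\infty}$: given $d\in D_0$, surjectivity of $h_0$ produces $x\in\Lambda_0$ with $h_0(x)=d$, the spanning property of $F$ produces $y\in F$ with $(f_0^j(x),f_0^j(y))\in\eta$ for $0\leq j\leq n-1$, and the intertwining identity together with the choice of $\eta$ gives $(g_0^j(d),g_0^j(h_0(y)))=(h_j(f_0^j(x)),h_j(f_0^j(y)))\in\gamma$ for $0\leq j\leq n-1$. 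Therefore $r_n(g_{0,\infty},D_0,\gamma)\leq|h_0(F)|\leq|F|=r_n(f_{0,\infty},\Lambda_0,\eta)$, whence $\bar{r}(g_{0,\infty},D_0,\gamma)\leq\bar{r}(f_{0,\infty},\Lambda_0,\eta(\gamma))$.

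Now I would pass to the net limits. By Proposition \ref{1}(i) the net $\eta\mapsto\bar{r}(f_{0,\infty},\Lambda_0,\eta)$ is monotone in the entourage, so each of its values is dominated by its limit $h(f_{0,\infty},\Lambda_0)$; in particular $\bar{r}(g_{0,\infty},D_0,\gamma)\leq h(f_{0,\infty},\Lambda_0)$ for every $\gamma\in\mathcal{V}^{s,o}$, and taking the limit over $\gamma\in\mathcal{V}^{s,o}$ gives $h(g_{0,\infty},D_0)\leq h(f_{0,\infty},\Lambda_0)$, which is the first assertion. For the equi-conjugate case each $h_n$ is bijective, so $\{h_n^{-1}\}_{n=0}^{\infty}$, $h_n^{-1}:D_n\to\Lambda_n$, is a surjective equi-continuous family with $f_n\circ h_n^{-1}=h_{n+1}^{-1}\circ g_n$, while $\{\Lambda_n\}_{n=0}^{\infty}$ and $\{D_n\}_{n=0}^{\infty}$ are invariant under $f_{0,\infty}$ and $g_{0,\infty}$ respectively; applying the inequality just proved with the two systems interchanged yields $h(f_{0,\infty},\Lambda_0)\leq h(g_{0,\infty},D_0)$, hence equality.

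I do not expect a serious obstacle here, since the argument is a routine transfer of spanning sets. The two points that need care are the bookkeeping between the distinct directed sets $\mathcal{U}^{s,o}$ and $\mathcal{V}^{s,o}$ — the dependence $\eta=\eta(\gamma)$ does not obstruct passing to the limit precisely because the net $\bar{r}(f_{0,\infty},\Lambda_0,\cdot)$ is monotone — and checking that the iterates $f_0^j(x)$ and $f_0^j(y)$ remain in $\Lambda_j$ so that the intertwining identity can be invoked; an alternative, entirely parallel route uses $(n,\cdot)$-separated sets and the contrapositive of equi-continuity in place of spanning sets.
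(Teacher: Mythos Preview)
Your proof is correct and follows essentially the same approach as the paper's: both use equi-continuity of $\{h_n\}$ to obtain an entourage $\eta\in\mathcal{U}^{s,o}$ for each $\gamma\in\mathcal{V}^{s,o}$ and then compare the relevant counts, the only difference being that you push spanning sets forward via $h_0$ to get $r_n(g_{0,\infty},D_0,\gamma)\leq r_n(f_{0,\infty},\Lambda_0,\eta)$, whereas the paper lifts separated sets back via $h_0^{-1}$ to get $s_n(g_{0,\infty},D_0,\gamma)\leq s_n(f_{0,\infty},\Lambda_0,\eta)$. As you note yourself, these two routes are dual and interchangeable here.
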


\begin{proof}
Since $\{h_n\}_{n=0}^{\infty}$ is equi-continuous, for any $\gamma\in{\mathcal{V}}^{s,o}$, there exists $\eta\in{\mathcal{U}}^{s,o}$
such that for any $n\geq0$ and $x,y\in\Lambda_n$,
\begin{align}\label{18}
(x,y)\in \eta\Rightarrow(h_n(x),h_n(y))\in \gamma.
\end{align}
Let $n\geq0$ and $E\subset D_0$ be an $(n,\gamma)$-separated set under $g_{0,\infty}$ with $|E|=s_n(g_{0,\infty},D_0,\gamma)$.
Fix $x_e\in h_{0}^{-1}(e)\subset\Lambda_0$ for any $e\in E$ and denote $E'=\{x_e: e\in E\}$. Then $|E'|=|E|$.
We claim that $E'$ is an $(n,\eta)$-separated set of $\Lambda_0$ under $f_{0,\infty}$.
In fact, for any $x_{e_1}\neq x_{e_2}\in E'$, $e_1\neq e_2\in E$, and then there exists $0\leq k\leq n-1$ such that $(g_{0}^{k}(e_1),g_{0}^{k}(e_2))\notin\gamma$. Since
\[g_{0}^{k}(e_i)=g_{0}^{k}\circ h_0(x_{e_i})=h_k\circ f_{0}^{k}(x_{e_i}),\;i=1,2,\]
$(h_k\circ f_{0}^{k}(x_{e_1}),h_k\circ f_{0}^{k}(x_{e_2}))\notin \gamma$. It follows from (\ref{18}) that
$(f_{0}^{k}(x_{e_1}),f_{0}^{k}(x_{e_2}))\notin\eta$. Thus,
\[s_n(g_{0,\infty},D_0,\gamma)\leq s_n(f_{0,\infty},\Lambda_0,\eta),\;n\geq0,\;\gamma\in{\mathcal{V}}^{s,o},\]
which implies that
\begin{align*}
\lim_{\gamma\in{\mathcal{V}}^{s,o}}\limsup_{n\to\infty}\frac{1}{n}\log s_n(g_{0,\infty},D_0,\gamma)
&\leq\lim_{\gamma\in{\mathcal{V}}^{s,o}}\limsup_{n\to\infty}\frac{1}{n}\log s_n(f_{0,\infty},\Lambda_0,\eta)\\
&\leq\lim_{\eta\in{\mathcal{U}}^{s,o}}\limsup_{n\to\infty}\frac{1}{n}\log s_n(f_{0,\infty},\Lambda_0,\eta).
\end{align*}
Hence, $h(f_{0,\infty},\Lambda_0)\geq h(g_{0,\infty},D_0)$.
\end{proof}

In the special case that $\Lambda_n=X$ and $D_n=Y$ for all $n\geq0$, we get the following result.

\begin{Corollary}\label{515}
Let $f_{0,\infty}$ and $g_{0,\infty}$ be two sequences of continuous self-maps on compact uniform spaces $(X,\mathcal{U})$
and $(Y,\mathcal{V})$, respectively. If $(X,f_{0,\infty})$ is topologically equi-semiconjugate to $(Y,g_{0,\infty})$, then
\[h(f_{0,\infty})\geq h(g_{0,\infty}).\]
Consequently, if $(X,f_{0,\infty})$ is topologically equi-conjugate to $(Y,g_{0,\infty})$, then
\[h(f_{0,\infty})=h(g_{0,\infty}).\]
\end{Corollary}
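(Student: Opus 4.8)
The plan is to read off Corollary~\ref{515} as the special case $\Lambda_n=X$, $D_n=Y$ (for all $n\ge 0$) of Theorem~\ref{277}, so essentially no new work is required. First I would observe that, under these choices, the definitions of Section~2 give $h(f_{0,\infty},\Lambda_0)=h(f_{0,\infty},X)=h(f_{0,\infty})$ and likewise $h(g_{0,\infty},D_0)=h(g_{0,\infty})$. Since topological equi-semiconjugacy of $(X,f_{0,\infty})$ to $(Y,g_{0,\infty})$ is exactly the hypothesis of Theorem~\ref{277} with $\Lambda_n=X$ and $D_n=Y$, that theorem immediately yields $h(f_{0,\infty})\ge h(g_{0,\infty})$.

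For the equality assertion under equi-conjugacy, I would invoke Theorem~\ref{277} a second time, with the roles of the two systems interchanged. Topological equi-conjugacy means each $h_n\colon X\to Y$ is a bijection and $\{h_n^{-1}\}_{n=0}^{\infty}$ is equi-continuous. Composing the intertwining identity $h_{n+1}\circ f_n=g_n\circ h_n$ with $h_{n+1}^{-1}$ on the left and $h_n^{-1}$ on the right gives $f_n\circ h_n^{-1}=h_{n+1}^{-1}\circ g_n$, so $\{h_n^{-1}\}_{n=0}^{\infty}$ realizes $(Y,g_{0,\infty})$ as topologically $\{h_n^{-1}\}_{n=0}^{\infty}$-equi-semiconjugate to $(X,f_{0,\infty})$. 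Applying Theorem~\ref{277} in this reversed direction then gives $h(g_{0,\infty})\ge h(f_{0,\infty})$, and combining with the first inequality yields $h(f_{0,\infty})=h(g_{0,\infty})$.

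There is no genuine obstacle here; the corollary is formal once Theorem~\ref{277} is in hand. The only points that warrant a line of verification are that the equi-continuity hypotheses of Theorem~\ref{277} hold in each application: for the first, that $\{h_n\}_{n=0}^{\infty}$ is equi-continuous, which is part of the equi-semiconjugacy hypothesis; for the second, that $\{h_n^{-1}\}_{n=0}^{\infty}$ is equi-continuous, which is precisely the extra clause distinguishing equi-conjugacy from equi-semiconjugacy in the definition. Since bijectivity and both equi-continuity conditions are packaged into the definitions, the argument closes immediately.
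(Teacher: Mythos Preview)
Your proposal is correct and matches the paper's approach: the paper introduces Corollary~\ref{515} with the single sentence ``In the special case that $\Lambda_n=X$ and $D_n=Y$ for all $n\geq0$, we get the following result,'' i.e., it is obtained directly from Theorem~\ref{277}. Your explicit verification of the equality part by reversing the roles via $\{h_n^{-1}\}$ is exactly the symmetric argument implicit in the ``Consequently'' clause of Theorem~\ref{277}, which the paper does not spell out.
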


Let $R$ be an equivalent relation on $X$ satisfying that for any $\gamma\in\mathcal{U}$, there exists $\eta\in\mathcal{U}$ such that
$\eta\circ R\circ\eta\subset R\circ\gamma\circ R$. Then the quotient map $\xi:X\to X/R$ generates a uniformity $\mathcal{U}/R$ on $X/R$ \cite{Cech66,Hood74}, where
\[\mathcal{U}/R=\{\gamma'\subset X/R\times X/R: (\xi\times \xi)^{-1}\gamma'\in\mathcal{U}\}=\{\gamma_R=(\xi\times \xi)\gamma: \gamma\in\mathcal{U}\}.\]
Note that
\begin{align}\label{009}
(\xi\times \xi)^{-1}\gamma_R=(\xi\times \xi)^{-1}(\xi\times \xi)\gamma=R\circ\gamma\circ R.
\end{align}
Let $\mathcal{U}^R$ be the uniformity on $X$ with base $\{R\circ\gamma\circ R: \gamma\in\mathcal{U}\}$.

\begin{lemma}\cite{Hood74}\label{31}
Let $f$ be a continuous self-map on a compact uniform spaces $(X,\mathcal{U})$ and $s$ be a self-map on $X/R$ satisfying that
$s\circ\xi=\xi\circ f$. Then $f$ is continuous on $(X,\mathcal{U}^R)$ and $s$ is continuous on $(X/R,\mathcal{U}/R)$.
\end{lemma}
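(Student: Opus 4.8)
The plan is to establish the stronger statements that $f$ is \emph{uniformly} continuous for $\mathcal U^R$ and that $s$ is \emph{uniformly} continuous for $\mathcal U/R$ (which imply the asserted continuity), working directly from the definition of uniform continuity; recall from the introduction that on a compact uniform space the two notions coincide, and that here $X$ is compact while $X/R=\xi(X)$ is compact as a continuous image. The key preliminary is to extract from $s\circ\xi=\xi\circ f$ that $f$ is compatible with $R$: if $(x,y)\in R$, i.e.\ $\xi(x)=\xi(y)$, then $\xi(f(x))=s(\xi(x))=s(\xi(y))=\xi(f(y))$, so $(f(x),f(y))\in R$; equivalently $(f\times f)(R)\subset R$. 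I will also record that $\xi:(X,\mathcal U)\to(X/R,\mathcal U/R)$ is uniformly continuous: by \eqref{009}, $(\xi\times\xi)^{-1}\gamma_R=R\circ\gamma\circ R$, and this set contains $\gamma\in\mathcal U$, hence lies in $\mathcal U$.

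For the continuity of $f$ on $(X,\mathcal U^R)$, fix a basic index $R\circ\gamma\circ R\in\mathcal U^R$ with $\gamma\in\mathcal U$. By uniform continuity of $f$ on $(X,\mathcal U)$ choose $\gamma'\in\mathcal U$ with $(f(x),f(y))\in\gamma$ whenever $(x,y)\in\gamma'$. Then the basic index $R\circ\gamma'\circ R\in\mathcal U^R$ works: given $(x,y)\in R\circ\gamma'\circ R$, pick $p,q$ with $(x,p)\in R$, $(p,q)\in\gamma'$, $(q,y)\in R$; compatibility gives $(f(x),f(p))\in R$ and $(f(q),f(y))\in R$, while $(f(p),f(q))\in\gamma$, so concatenating and using $R\circ R=R$ yields $(f(x),f(y))\in R\circ\gamma\circ R$. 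Thus $(f\times f)(R\circ\gamma'\circ R)\subset R\circ\gamma\circ R$, which is uniform continuity of $f$ relative to $\mathcal U^R$.

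For the continuity of $s$ on $(X/R,\mathcal U/R)$, observe that $s\circ\xi=\xi\circ f$ is uniformly continuous as a composition of the uniformly continuous maps $f$ on $(X,\mathcal U)$ and $\xi$. Hence, given $\gamma_R\in\mathcal U/R$, there is $\delta\in\mathcal U$ with $(s\xi(x),s\xi(y))\in\gamma_R$ whenever $(x,y)\in\delta$. Now take $(\bar x,\bar y)\in\delta_R=(\xi\times\xi)(\delta)$; by surjectivity of $\xi$ choose $(x,y)\in\delta$ with $\xi(x)=\bar x$ and $\xi(y)=\bar y$, so that $(s\bar x,s\bar y)=(s\xi(x),s\xi(y))\in\gamma_R$. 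Therefore $(s\times s)(\delta_R)\subset\gamma_R$ with $\delta_R\in\mathcal U/R$, proving uniform continuity of $s$.

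What needs care, more than any genuine obstacle, is that $s$ is only given as a set map, so every estimate for $s$ has to be pushed across the surjection $\xi$; the two ingredients making this work are the compatibility of $f$ with $R$ and the identity $(\xi\times\xi)^{-1}\gamma_R=R\circ\gamma\circ R$, together with the equivalence-relation properties $\bigtriangleup_X\subset R$ and $R\circ R=R$ that keep the triple compositions $R\circ(\cdot)\circ R$ from enlarging.
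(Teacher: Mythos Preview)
Your argument is correct. The paper does not supply its own proof of this lemma: it is quoted verbatim from Hood \cite{Hood74} and used as a black box in the proof of Theorem~\ref{32}. So there is no ``paper's proof'' to compare against; you have simply filled in the omitted justification. The two halves of your argument---pushing $f$-continuity through the sandwich $R\circ(\cdot)\circ R$ via the compatibility $(f\times f)(R)\subset R$, and pulling $s$-continuity back along the surjection $\xi$ via $s\circ\xi=\xi\circ f$---are the natural ones and are essentially what one finds in Hood's original paper. One cosmetic point: in Part~4 the phrase ``by surjectivity of $\xi$'' is superfluous, since the existence of $(x,y)\in\delta$ with $(\xi(x),\xi(y))=(\bar x,\bar y)$ is just the definition of $\delta_R=(\xi\times\xi)(\delta)$.
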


\begin{theorem}\label{32}
Let $f_{0,\infty}$ be a sequence of continuous self-maps on a compact uniform space $(X,\mathcal{U})$.
Assume that there exists a sequence of maps $s_{0,\infty}=\{s_n\}_{n=0}^{\infty}$ on $(X/R,\mathcal{U}/R)$
satisfying that $s_n\circ\xi=\xi\circ f_n$ for any $n\geq0$. Then
\[h(f_{0,\infty},\mathcal{U}^{R})\leq h(s_{0,\infty},\mathcal{U}/R)\leq h(f_{0,\infty},\mathcal{U}).\]
\end{theorem}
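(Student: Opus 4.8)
The plan is to prove the two inequalities separately, both by transferring spanning/separated sets through the quotient map $\xi$. For the right-hand inequality $h(s_{0,\infty},\mathcal{U}/R)\le h(f_{0,\infty},\mathcal{U})$: given $\gamma\in\mathcal{U}^{s,o}$ and $n\ge 1$, take an $(n,\gamma)$-spanning set $F\subset X$ for $f_{0,\infty}$ (relative to $\mathcal{U}$) of minimal cardinality $r_n(f_{0,\infty},X,\gamma)$. Using $s_k\circ\xi=\xi\circ f_k$, one gets $s_0^k\circ\xi=\xi\circ f_0^k$ for all $k$, so $\xi(F)$ should $(n,\gamma_R)$-span $X/R$ under $s_{0,\infty}$: if $(f_0^k(x),f_0^k(y))\in\gamma$ then applying $\xi\times\xi$ gives $(s_0^k(\xi x),s_0^k(\xi y))\in\gamma_R$. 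Hence $r_n(s_{0,\infty},X/R,\gamma_R)\le r_n(f_{0,\infty},X,\gamma)$; taking $\limsup_n\frac1n\log(\cdot)$ and then the limit over the net (noting $\{\gamma_R:\gamma\in\mathcal{U}^{s,o}\}$ is cofinal in $(\mathcal{U}/R)^{s,o}$, since $\mathcal{U}^{s,o}$ is a base for $\mathcal{U}$ and $\xi\times\xi$ respects symmetry and, by Lemma \ref{31}, continuity/openness) yields the bound.

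For the left-hand inequality $h(f_{0,\infty},\mathcal{U}^R)\le h(s_{0,\infty},\mathcal{U}/R)$: here I work with separated sets. Fix $\gamma\in\mathcal{U}^{s,o}$ and let $E\subset X$ be $(n,R\circ\gamma\circ R)$-separated under $f_{0,\infty}$ of maximal cardinality $s_n(f_{0,\infty},X,R\circ\gamma\circ R)$ — recall $\{R\circ\gamma\circ R:\gamma\in\mathcal{U}\}$ is a base for $\mathcal{U}^R$. I claim $\xi|_E$ is injective and $\xi(E)$ is $(n,\gamma_R)$-separated under $s_{0,\infty}$. Indeed, for $x\ne y$ in $E$ there is $0\le k\le n-1$ with $(f_0^k x,f_0^k y)\notin R\circ\gamma\circ R$; by (\ref{009}), $R\circ\gamma\circ R=(\xi\times\xi)^{-1}\gamma_R$, so $(\xi f_0^k x,\xi f_0^k y)=(s_0^k\xi x,s_0^k\xi y)\notin\gamma_R$. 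In particular (taking any $k$) $\xi x\ne\xi y$, so $|\xi(E)|=|E|$, and $\xi(E)$ is $(n,\gamma_R)$-separated. Therefore $s_n(f_{0,\infty},X,R\circ\gamma\circ R)\le s_n(s_{0,\infty},X/R,\gamma_R)$; again pass to $\limsup_n\frac1n\log(\cdot)$ and then to the net limit, using the cofinality noted above, to conclude.

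The main obstacle I anticipate is the bookkeeping around the nets of entourages: one must check that $h(f_{0,\infty},\mathcal{U}^R)$ and $h(s_{0,\infty},\mathcal{U}/R)$, as defined via limits over $(\mathcal{U}^R)^{s,o}$ and $(\mathcal{U}/R)^{s,o}$, can both be computed along the cofinal subnet indexed by $\gamma\in\mathcal{U}^{s,o}$ via $\gamma\mapsto R\circ\gamma\circ R$ and $\gamma\mapsto\gamma_R$ respectively — this uses that $\mathcal{U}^{s,o}$ is a base for $\mathcal{U}$, that $R\circ\gamma\circ R$ and $\gamma_R$ are symmetric when $\gamma$ is, and Lemma \ref{31} for continuity of $f_n$ on $(X,\mathcal{U}^R)$ and of $s_n$ on $(X/R,\mathcal{U}/R)$ (which is what makes the separated/spanning quantities finite and the entropies well-defined). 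A minor point to verify carefully is the identity $s_0^k\circ\xi=\xi\circ f_0^k$ for composite maps, which follows by induction from $s_j\circ\xi=\xi\circ f_j$, and the fact that openness of $\gamma$ passes to $\gamma_R$ since $\xi$ is a quotient (hence open for these uniformities) map. Everything else is a routine repetition of the spanning/separated manipulations already used in Propositions \ref{1} and \ref{223}.
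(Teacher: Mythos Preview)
Your proposal is correct. For the right-hand inequality the paper simply invokes Theorem~\ref{277}: since $\xi$ is continuous and surjective (and a constant sequence $\{h_n\}=\{\xi\}$ is trivially equi-continuous), $(X,\mathcal{U},f_{0,\infty})$ is topologically equi-semiconjugate to $(X/R,\mathcal{U}/R,s_{0,\infty})$, and the inequality follows. Your direct spanning-set argument is exactly the computation underlying that citation, so the content is the same. For the left-hand inequality the paper goes the dual route: it takes an $(n,\gamma_R)$-spanning set $\tilde F\subset X/R$ of minimal cardinality, lifts it to $F\subset X$ by choosing one $\xi$-preimage per point, and shows (via the same identity~(\ref{009})) that $F$ is $(n,R\circ\gamma\circ R)$-spanning for $X$ under $f_{0,\infty}$, giving $r_n(f_{0,\infty},X,R\circ\gamma\circ R)\le r_n(s_{0,\infty},X/R,\gamma_R)$. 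You instead push an $(n,R\circ\gamma\circ R)$-separated set downward through $\xi$. Both arguments are equally short and rest on~(\ref{009}); yours has the minor aesthetic advantage that both halves transfer sets in the same direction, while the paper's reuses an already-proved theorem for one half. The cofinality bookkeeping you flag is genuine but routine (and the paper handles it at the same level of informality, working directly with $\gamma_R$ and $R\circ\gamma\circ R$ without separately verifying membership in the symmetric-open base).
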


\begin{proof}
For any $n\geq0$, $s_n(X/R)\subset X/R$ since $\xi$ is surjective and $s_n\circ\xi=\xi\circ f_n$, and $s_n$ is continuous on $(X/R,\mathcal{U}/R)$ by Lemma \ref{31}. By the fact that $\xi$ is continuous and surjective, $(X,\mathcal{U},f_{0,\infty})$ is topologically equi-semiconjugate to $(X/R,\mathcal{U}/R,s_{0,\infty})$. Thus, $h(s_{0,\infty},\mathcal{U}/R)\leq h(f_{0,\infty},\mathcal{U})$
by Theorem \ref{277}.

Let $\tilde{F}$ be a subset of $X/R$ which $(n,\gamma_R)$-spans $X/R$ under $s_{0,\infty}$ with $|\tilde{F}|=r_n(\gamma_R,X/R,s_{0,\infty})$.
For any $x\in X$, there exists $\tilde{y}\in\tilde{F}$ such that $(s_{0}^{j}\circ\xi(x),s_{0}^{j}(\tilde{y}))\in\gamma_R$ for any
$0\leq j\leq n-1$. For any $\tilde{z}\in \tilde{F}$, take one point $z\in X$ such that $\xi(z)=\tilde{z}$. Denote
$F=\{z: \tilde{z}\in F\}$. Then $|F|=|\tilde{F}|$ and $\xi(F)=\tilde{F}$. Thus, there exists $y\in F$ such that $\tilde{y}=\xi(y)$. So,
\[(\xi\circ f_{0}^{j}(x),\xi\circ f_{0}^{j}(y))=(s_{0}^{j}\circ\xi(x),s_{0}^{j}\circ\xi(y))
=(s_{0}^{j}\circ\xi(x),s_{0}^{j}(\tilde{y}))\in\gamma_R,\;0\leq j\leq n-1.\]
This, together with (\ref{009}), implies that
\[(f_{0}^{j}(x),f_{0}^{j}(y))\in(\xi\times\xi)^{-1}\gamma_R=R\circ\gamma\circ R,\;0\leq j\leq n-1.\]
Hence, $F$ is a subset of $X$ which $(n,R\circ\gamma\circ R)$-spans $X$ under $f_{0,\infty}$ and
\[r_n(f_{0,\infty},X,R\circ\gamma\circ R)\leq|F|=|\tilde{F}|=r_n(s_{0,\infty},X/R,\gamma_R).\]
Therefore,
$h(s_{0,\infty},\mathcal{U}/R)\geq h(f_{0,\infty},\mathcal{U}^{R})$.
\end{proof}

\begin{remark}
There may be many different uniformities on the set $X$. Theorem \ref{32} shows that topological entropy depends on the
uniformity on $X$.
\end{remark}

The topological sup-entropy of  $f_{0,\infty}$ on a subset of $X$ is proposed in \cite{Kolyada96}, we extend it to uniform spaces.
Suppose that $f_{0,\infty}$ is equi-continuous on $X$. Let $n\geq1$ and $\gamma\in{\mathcal{U}}^{s,o}$.  A subset $E^*\subset X$ is called $(n,\gamma)^*$-separated if for each pair $x\neq y\in E^*$, there exist $i\geq0$ and $0\leq j\leq n-1$ such that $(f_i^j(x),f_i^j(y))\notin\gamma$; a subset $F^*\subset X$ is called $(n,\gamma)^*$-spans another set $K\subset X$ if for each $x\in K$ there exists $y\in F^*$ such that $(f_i^j(x),f_i^j(y))\in\gamma$ for any $0\leq j\leq n-1$ and $i\geq0$. For a nonempty subset $\Lambda$ of $X$, let $s_n^*(f_{0,\infty},\Lambda,\gamma)$ be the maximal cardinality of an $(n,\gamma)^*$-separated set in $\Lambda$ under $f_{0,\infty}$ and $r_n^*(f_{0,\infty},\Lambda,\gamma)$ be the minimal cardinality of a set in $\Lambda$ which $(n,\gamma)^*$-spans $\Lambda$ under $f_{0,\infty}$. $s_n^*(f_{0,\infty},\Lambda,\gamma)$ and $r_n^*(f_{0,\infty},\Lambda,\gamma)$ are finite since $X$ is compact and $f_{0,\infty}$ is equi-continuous. With a similar proof to that of Proposition \ref{1}, one easily shows that for any $\gamma_1,\gamma_2\in\mathcal{U}^{s,o}$ with $\gamma_1\subset\gamma_2$,
\[s_n^*(f_{0,\infty},\Lambda,\gamma_1)\geq s_n^*(f_{0,\infty},\Lambda,\gamma_2),\;r_n^*(f_{0,\infty},\Lambda,\gamma_1)\geq r_n^*(f_{0,\infty},\Lambda,\gamma_2),\]
and for any $\gamma,\eta\in{\mathcal{U}}^{s,o}$ with $\eta^2\subset\gamma$,
\[r_n^*(f_{0,\infty},\Lambda,\gamma)\leq s_n^*(f_{0,\infty},\Lambda,\gamma)\leq r_n^*(f_{0,\infty},\Lambda,\eta).\]
Define the topological sup-entropy of  $f_{0,\infty}$ on the subset $\Lambda$ of $X$ as
\[H(f_{0,\infty},\Lambda)=\lim_{\gamma\in{\mathcal{U}}^{s,o}}\limsup_{n\to\infty}\frac{1}{n}\log s_n^*(f_{0,\infty},\Lambda,\gamma)
=\lim_{\gamma\in{\mathcal{U}}^{s,o}}\limsup_{n\to\infty}\frac{1}{n}\log r_n^*(f_{0,\infty},\Lambda,\gamma).\]
Clearly, $h(f_{0,\infty},\Lambda)\leq H(f_{0,\infty},\Lambda)$.

\begin{theorem}\label{h1}
Let $f_{0,\infty}$ be a sequence of equi-continuous self-maps on a compact Hausdorff uniform space $(X,\mathcal{U})$ and $g_{0,\infty}$ be a sequence of continuous self-maps on a compact Hausdorff uniform space $(Y,\mathcal{V})$. Assume that $(X,f_{0,\infty})$ is topologically $\{\pi_n\}_{n=0}^{\infty}$-equi-semiconjugate to $(Y,g_{0,\infty})$ and $\{\pi_n\}_{n=0}^{\infty}\subset\{\phi_1,\cdots,\phi_k\}$
for some $k\geq1$. Then
\[h(g_{0,\infty})\leq h(f_{0,\infty})\leq h(g_{0,\infty})+\max_{1\leq j\leq k}\sup_{y\in Y}H(f_{0,\infty},\phi_{j}^{-1}(y)).\]
Consequently, if $\{\pi_n\}_{n=0}^{\infty}$ is finite-to-one, then
\[h(f_{0,\infty})=h(g_{0,\infty}).\]
\end{theorem}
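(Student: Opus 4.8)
The plan is to prove the two inequalities separately. The lower bound $h(g_{0,\infty})\leq h(f_{0,\infty})$ is immediate from Corollary \ref{515}, since equi-semiconjugacy gives $h(f_{0,\infty})\geq h(g_{0,\infty})$. So the real content is the upper bound
\[h(f_{0,\infty})\leq h(g_{0,\infty})+\max_{1\leq j\leq k}\sup_{y\in Y}H(f_{0,\infty},\phi_{j}^{-1}(y)).\]
The strategy is the classical Bowen-type fiber argument, adapted to uniform spaces and to the nonautonomous setting: cover $X$ by counting how many orbit-distinguishable points live over a fixed $g_{0,\infty}$-orbit in $Y$, and bound that fiber count using the sup-entropy of $f_{0,\infty}$ on the preimages $\phi_j^{-1}(y)$. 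The sup-entropy (rather than ordinary entropy) on the fibers is exactly what is needed because the fibers $\pi_n^{-1}(\cdot)$ move with $n$ while all lying in the finite family $\{\phi_1,\dots,\phi_k\}$, so one must separate points using \emph{all} shifted iterates $f_i^j$, which is precisely the $(n,\gamma)^*$-separation notion.

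Here are the steps I would carry out. First, fix $\gamma\in{\mathcal{U}}^{s,o}$ and choose, by uniform continuity/equi-continuity, a symmetric open $\gamma'$ with $(\gamma')^3\subset\gamma$ (or $(\gamma')^2\subset\gamma$, adjusting constants as needed). Second, let $E\subset X$ be an $(n,\gamma)$-separated set in $X$ under $f_{0,\infty}$ with $|E|=s_n(f_{0,\infty},X,\gamma)$. Push it forward: $\pi_0(E)\subset Y$, and by the intertwining $\pi_{k}\circ f_0^{k}=g_0^{k}\circ\pi_0$ together with equi-continuity of $\{\pi_n\}$, points of $E$ whose images agree in $Y$ to within a suitable $\delta\in{\mathcal V}^{s,o}$ along the first $n$ coordinates all lie in a single fiber $\pi_0^{-1}(y)$ up to a controlled $\gamma'$-fattening. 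Concretely: let $D\subset Y$ be an $(n,\delta)$-spanning set for $g_{0,\infty}$ of minimal size $r_n(g_{0,\infty},Y,\delta)$; for each $e\in E$ pick $w(e)\in D$ that $(n,\delta)$-spans $g_0^j(\pi_0(e))$. Third, estimate the number of $e\in E$ mapping to a given $w\in D$: if $e_1,e_2$ share the same $w$, then $(g_0^j\pi_0(e_1),g_0^j\pi_0(e_2))\in\delta^2$ for $0\leq j\leq n-1$, hence $\pi_j(f_0^j e_1)$ and $\pi_j(f_0^j e_2)$ are $\delta^2$-close; choosing $\delta$ so that $\delta^2$-closeness in $Y$ plus equi-continuity of $\{\pi_n^{-1}\}$-type estimates forces $(f_0^j e_1, f_0^j e_2)$ to be close enough that $e_1,e_2$ must be separated by a \emph{shifted} pair of iterates if at all — i.e. the set of such $e$ is $(n,\gamma)^*$-separated inside $\pi_j^{-1}(\text{small set})$. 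Since $\pi_j=\phi_{i(j)}$ for some $i(j)\in\{1,\dots,k\}$, this fiber has at most $s_n^*\!\big(f_{0,\infty},\phi_{i(j)}^{-1}(y),\gamma'\big)$ points, bounded by $\max_j\sup_y s_n^*(f_{0,\infty},\phi_j^{-1}(y),\gamma')$. Fourth, combine: $s_n(f_{0,\infty},X,\gamma)\leq r_n(g_{0,\infty},Y,\delta)\cdot\max_{1\leq j\leq k}\sup_{y}s_n^*(f_{0,\infty},\phi_j^{-1}(y),\gamma')$; take $\frac1n\log$, then $\limsup_{n\to\infty}$, then the net-limit over $\gamma$ (which drives $\gamma'$ and $\delta$ to the diagonal as well since ${\mathcal U}^{s,o},{\mathcal V}^{s,o}$ are cofinal directed bases), obtaining $h(f_{0,\infty})\leq h(g_{0,\infty})+\max_j\sup_y H(f_{0,\infty},\phi_j^{-1}(y))$.

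For the ``Consequently'' clause: if $\{\pi_n\}$ is finite-to-one, then each fiber $\phi_j^{-1}(y)$ has at most $c$ points, so any $(n,\gamma)^*$-separated subset of it has at most $c$ elements, whence $H(f_{0,\infty},\phi_j^{-1}(y))=\lim_\gamma\limsup_n\frac1n\log s_n^*\leq\lim_n\frac1n\log c=0$. Therefore $h(f_{0,\infty})\leq h(g_{0,\infty})$, and combined with the reverse inequality from Corollary \ref{515} we get $h(f_{0,\infty})=h(g_{0,\infty})$.

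The main obstacle I anticipate is making the fiber-localization step fully rigorous in the uniform-space language: one must carefully choose the entourages $\gamma',\delta$ so that ``$g_0^j\pi_0(e_1)$ and $g_0^j\pi_0(e_2)$ are $\delta^2$-close for all $j<n$'' together with the equi-continuity of $\{\pi_n\}$ genuinely confines $e_1,e_2$ to a set on which ordinary $(n,\gamma)$-separation collapses to $(n,\gamma')^*$-separation within a \emph{single} $\phi_{i(j)}$-fiber — the subtlety is that the relevant fiber index $i(j)$ may vary with $j$, so one should fix $j=0$ (use $\pi_0$, which equals some $\phi_{j_0}$) and absorb the other coordinates into the $*$-separation count, or alternatively pass to the $n$-th iteration system as in the proof of Theorem \ref{h} to homogenize. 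Getting the quantifier order right in the three successive limits ($n\to\infty$, then the $\gamma$-net) while keeping the auxiliary entourages cofinal is the delicate bookkeeping; everything else is a direct transcription of Bowen's fiber-entropy inequality.
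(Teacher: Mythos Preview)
Your lower bound via Corollary \ref{515} and the ``Consequently'' clause are correct. The upper-bound argument, however, has a genuine gap in your step~3. You invoke ``equi-continuity of $\{\pi_n^{-1}\}$-type estimates'' to confine the points $e\in E$ lying over a given $w\in D$ to a single fiber $\phi_{j}^{-1}(y)$, on which you then bound by $s_n^*$. But the $\pi_n$ are only semi-conjugacies: there is no continuity assumption on $\pi_n^{-1}$, so closeness of $\pi_0(e_1),\pi_0(e_2)$ in $Y$ gives no control over $e_1,e_2$ in $X$ beyond membership in the \emph{neighborhood} $\pi_0^{-1}(\delta^2[w])=\phi_{j_0}^{-1}(\delta^2[w])$ of the fiber $\phi_{j_0}^{-1}(w)$. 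The quantity $s_n^*(f_{0,\infty},\cdot,\gamma')$ on such a neighborhood is not bounded by its value on the fiber, so your multiplicative inequality
\[
s_n(f_{0,\infty},X,\gamma)\le r_n(g_{0,\infty},Y,\delta)\cdot\max_{1\le j\le k}\sup_{y}s_n^*(f_{0,\infty},\phi_j^{-1}(y),\gamma')
\]
is unjustified. There is a second, independent obstruction: even granting that inequality, the final step would produce $\limsup_{n}\frac1n\log\bigl(\sup_y s_n^*\bigr)$, whereas $H(f_{0,\infty},\phi_j^{-1}(y))$ involves $\sup_y\limsup_{n}\frac1n\log s_n^*$; in general $\limsup_n\sup_y\ge\sup_y\limsup_n$, so the inequality goes the wrong way, and the ``good'' $n$ at which $\frac1n\log s_n^*\le a+\varepsilon$ genuinely depends on $y$.

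The paper handles both issues simultaneously by Bowen's stitching mechanism. For each $y\in Y$ and each $j$ it fixes a \emph{single} time $m_j(y)$ and an $(m_j(y),\gamma)^*$-\emph{spanning} (not separated) set $F^*_y(j)\subset\phi_j^{-1}(y)$ with $\frac{1}{m_j(y)}\log|F^*_y(j)|\le a$; equi-continuity of $f_{0,\infty}$ (not of $\pi_n^{-1}$) makes the associated dynamical balls $D^*_{m_j(y)}(f_{0,\infty},z,\gamma)$ open, and compactness of $X$ then yields neighborhoods $W_y\subset Y$ with $\phi_j^{-1}(W_y)$ covered by those balls. A finite subcover $\{W_{y_1},\dots,W_{y_p}\}$ reduces everything to finitely many times $m_j(y_i)$, eliminating the sup--limsup problem. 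Given an $(n,\eta)$-spanning set $E_n$ of $Y$, one codes each $y\in E_n$ by an inductive sequence of times $t_0<t_1<\cdots$ and centers $c_s(y)\in\{y_1,\dots,y_p\}$, builds from these an explicit open cover $\beta$ of $X$, and shows that every member of $\beta$ meets any $(n,u)$-separated set in at most one point. The bound $s_n(f_{0,\infty},X,u)\le|E_n|\prod_s|F^*_{c_s(y)}(j_{t_s(y)})|$ then gives the inequality after a short calculation. Your single-$n$ multiplicative estimate cannot replace this construction.
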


\begin{proof}
It follows from Corollary \ref{515} that $h(f_{0,\infty})\geq h(g_{0,\infty})$.

Denote $a:=\max_{1\leq j\leq k}\sup_{y\in Y}H(f_{0,\infty},\phi_{j}^{-1}(y))$. It is to show that $h(f_{0,\infty})\leq h(g_{0,\infty})+a$.
If $a=\infty$, then we are done. Suppose that $a<\infty$.
Let $u\in{\mathcal{U}}^{s,o}$. Then there exists $\gamma\in{\mathcal{U}}^{s,o}$ such that $\gamma^{2}\subset u$.
By the definition of $H(f_{0,\infty},\phi_{j}^{-1}(y))$,
there exists $m_j(y)\geq1$ such that
\[\frac{1}{m_j(y)}\log r_{m_j(y)}^{*}(f_{0,\infty},\phi_{j}^{-1}(y),\gamma)\leq H(f_{0,\infty},\phi_{j}^{-1}(y))\leq a\]
for any $y\in Y$ and $1\leq j\leq k$, and let
$F^{*}_y(j)\subset\phi_{j}^{-1}(y)$ which $(m_j(y),\gamma)$-spans $\phi_{j}^{-1}(y)$ with $|F^{*}_y(j)|=r_{m_j(y)}^{*}(f_{0,\infty},\phi_{j}^{-1}(y),\gamma)$, then
\begin{align}\label{lo}
\frac{1}{m_j(y)}\log|F^{*}_y(j)|\leq a.
\end{align}
For any $y\in Y$ and $1\leq j\leq k$, denote
\begin{align}\label{lo2}
U_{y}^{j}=\bigcup_{z\in F^{*}_y(j)}D^{*}_{m_j(y)}(f_{0,\infty},z,\gamma),
\end{align}
where
\begin{align}\label{lo3}
D^{*}_{m_j(y)}(f_{0,\infty},z,\gamma)=\{\omega\in X: (f_{i}^{t}(w),f_{i}^{t}(z))\in\gamma, \;0\leq t\leq m_j(y)-1,\;
i\geq0\}.
\end{align}
Since $f_{0,\infty}$ is equi-continuous, $D^{*}_{m_j(y)}(f_{0,\infty},z,\gamma)$ is open, and thus
$U_{y}^{j}$ is open. Clearly, $\phi_{j}^{-1}(y)\subset U_{y}^{j}$, which implies that
\[(X\setminus U_{y}^{j})\bigcap(\bigcap_{\gamma\in{\mathcal{U}}^{s,o}}\phi_{j}^{-1}(\bar{\gamma}[y]))=\emptyset.\]
Thus, there exists $\gamma_{j,y}\in{\mathcal{U}}^{s,o}$ such that $\phi_{j}^{-1}(\gamma_{j,y}[y])\subset U_{y}^{j}$ since $X$ is compact.
Denote
$W_y=\bigcap_{1\leq j\leq k}\gamma_{j,y}[y].$
Then
\begin{align}\label{ll}
\phi_{j}^{-1}(W_{y})\subset U_{y}^{j},\;1\leq j\leq k.
\end{align}

Let $\{W_{y_1},\cdots,W_{y_p}\}$, $p\geq1$, be a subcover of the open cover $\{W_{y}: y\in X\}$.
By Lemma \ref{Lebesgue covering lemma}, there exists $\eta\in{\mathcal{U}}^{s,o}$ such that $\eta[x]$ is a subset of some member of $\{W_{y_1},\cdots,W_{y_p}\}$ for any $x\in X$. For each $n\geq0$, there exists $1\leq j_n\leq k$ such that
\begin{align}\label{5151}
\pi_n=\phi_{j_n}.
\end{align}
Let $n\geq1$ and $E_n$ be a subset of $Y$ which $(n,\eta)$-spans $Y$ under $g_{0,\infty}$ with $|E_n|=r_n(g_{0,\infty},Y,\eta)$.
For any $y\in E_n$, there exists $c_0(y)\in\{y_1,\cdots,y_p\}$ such that $\eta[y]\subset W_{c_0(y)}$, and define $t_0=0$;
let $t_1(y)=m_{j_0}(c_0(y))$, and let $c_1(y)\in\{y_1,\cdots,y_p\}$ such that $\eta[g_0^{t_1(y)}(y)]\subset W_{c_1(y)}$.
Inductively, assume that $t_0(y),\cdots,t_k(y)$ and $c_0(y),\cdots,c_k(y)$ are already defined, define
\[t_{k+1}(y)=t_k(y)+m_{j_{t_{k}}}(c_k(y)),\]
and define $c_{k+1}(y)\in\{y_1,\cdots,y_p\}$ satisfying that
\begin{align}\label{lll}
\eta[g_0^{t_{k+1}(y)}(y)]\subset W_{c_{k+1}(y)}.
\end{align}
In fact,
\begin{align}\label{234}
t_{k}(y)=\sum_{s=0}^{k-1}m_{j_{t_{s}}}(c_s(y)),\; k\geq1.
\end{align}
Then there exists $l\geq0$ such that
\begin{align}\label{345}
t_l(y)<n\leq t_{l+1}(y).
\end{align}

We claim that
\[\beta=\{V(y;x_0,\cdots,x_l): y\in E_n, x_s\in F^{*}_{c_s(y)}(j_{t_{s}(y)}),\;0\leq s\leq l\}\]
is an open cover of $X$, where
\[V(y;x_0,\cdots,x_l)=\{x\in X: (f_{0}^{t+t_s(y)}(x),f_{t_s(y)}^{t}(x_s))\in\gamma,\;0\leq t\leq m_{j_{t_{s}(y)}}(c_s(y))-1, \;0\leq s\leq l\}.\]
In fact, let $x\in X$. Since $E_n$ is a set of $Y$ which $(n,\eta)$-spans $Y$ under $g_{0,\infty}$, there exists $y\in E_n$ such that
\begin{align}\label{lo1}
(g_{0}^{i}(y),g_{0}^{i}(\pi_0(x)))\in\eta,\; 0\leq i\leq n-1.
\end{align}
For any $0\leq s\leq l$,
\[t_{s}(y)\leq t_{l}(y)<n,\]
then by (\ref{lo1}), one has that
\[\pi_{t_{s}(y)}\circ f_{0}^{t_{s}(y)}(x)=g_{0}^{t_{s}(y)}\circ\pi_{0}(x)\in\eta[g_{0}^{t_{s}(y)}(y)],\]
and thus by (\ref{ll})-(\ref{lll}), we have
\[f_{0}^{t_{s}(y)}(x)\in\pi_{t_{s}(y)}^{-1}(\eta[g_{0}^{t_{s}(y)}(y)])=\phi_{j_{t_{s}(y)}}^{-1}(\eta[g_{0}^{t_{s}(y)}(y)])
\subset \phi_{j_{t_{s}(y)}}^{-1}(W_{c_{s}(y)})\subset U_{c_{s}(y)}^{j_{t_{s}(y)}}.\]
It follows from (\ref{lo2}) and (\ref{lo3}) that there exists $x_s\in F^{*}_{c_{s}(y)}(j_{t_{s}(y)})$ such that
\[(f_{i}^{t}(f_{0}^{t_{s}(y)}(x)),f_{i}^{t}(x_s))\in\gamma,\;0\leq t\leq m_{j_{t_{s}(y)}}(c_{s}(y))-1,\;i\geq0.\]
Thus,
\[(f_{0}^{t+t_{s}(y)}(x),f_{{t_{s}(y)}}^{t}(x_s))=(f_{{t_{s}(y)}}^{t}(f_{0}^{t_{s}(y)}(x)),f_{{t_{s}(y)}}^{t}(x_s))\in\gamma,
\;0\leq t\leq m_{j_{t_{s}(y)}}(c_{s}(y))-1,\;0\leq s\leq l.\]
So, $x\in V(y;x_0,\cdots,x_l)$. Hence, $\beta$ is an open cover of $X$.

We also claim that any $(n,u)$-separated set intersects each element of $\beta$ at most one point.
Otherwise, there exists an $(n,u)$-separated pair $(z,w)$ such that $z,w\in V(y;x_0,\cdots,x_l)$
for some $y\in E_n$ and $x_s\in F^{*}_{c_s(y)}(j_{t_{s}(y)})$, $0\leq s\leq l$. Then
\[(f_{0}^{t+t_s(y)}(z),f_{t_s(y)}^{t}(x_s))\in\gamma,\;(f_{0}^{t+t_s(y)}(\omega),f_{t_s(y)}^{t}(x_s))\in\gamma,\;0\leq t\leq m_{j_{t_{s}(y)}}(c_s(y))-1,\;0\leq s\leq l,\]
which yields that
\begin{align}\label{0091}
(f_{0}^{t+t_s(y)}(z),f_{0}^{t+t_s(y)}(\omega))\in \gamma\circ\gamma\subset u,\;0\leq t\leq m_{j_{t_{s}(y)}}(c_s(y))-1,\;0\leq s\leq l.
\end{align}
This contradicts to the fact that $(z,w)$ is an $(n,u)$-separated pair for $(X,f_{0,\infty})$ since
\[\max_{0\leq s\leq l}\big(t+t_s(y)\big)=t_{l+1}(y)-1\geq n-1.\]
So,
\begin{align}\label{0092}
s_n(f_{0,\infty},X,u)\leq|\beta|=|E_n|\Pi_{s=0}^{l}|F^{*}_{c_s(y)}(j_{t_{s}(y)})|.
\end{align}
Hence, by (\ref{lo}), (\ref{234}), (\ref{345}) and (\ref{0092}), we have that
\begin{align*}
\frac{1}{n}\log s_n(f_{0,\infty}, X,u)&\leq\frac{1}{n}\log|E_n|+\frac{1}{n}\sum_{s=0}^{l}\log|F^{*}_{c_s(y)}(j_{t_{s}(y)})|\\
&\leq\frac{1}{n}\log r_n(g_{0,\infty},Y,\eta)+\frac{a}{n}\sum_{s=0}^{l}\log m_{j_{t_{s}(y)}}(c_s(y))\\
&=\frac{1}{n}\log r_n(g_{0,\infty},Y,\eta)+\frac{a}{n}\big(\sum_{s=0}^{l-1}\log m_{j_{t_{s}(y)}}(c_s(y))+\log m_{j_{t_{l}(y)}}(c_l(y))\big)\\
&\leq\frac{1}{n}\log r_n(g_{0,\infty},Y,\eta)+\frac{a}{n}(n+M),
\end{align*}
where $M:=\max_{1\leq j\leq k}\{m_{j}(y_1),\cdots,m_{j}(y_p)\}$.
This implies that
\[h(f_{0,\infty})\leq h(g_{0,\infty})+a.\]

If $\{\pi_n\}_{n=0}^{\infty}$ is finite-to-one, then $a=0$, and thus
$h(f_{0,\infty})\leq h(g_{0,\infty}).$
\end{proof}

\begin{remark}
{\rm(i)} Theorem \ref{h1} is a generalization of Theorem 17 in \cite{Bowen71} to nonautonomous dynamical systems, and it is also a uniform version of Theorem C in \cite{Kolyada96}.

{\rm(ii)} For two semi-conjugate random dynamical systems on Polish spaces, Liu in \cite{liu05} proved that they have the same entropy
if the cardinal number of the pre-image of a point under the semi-conjugacy is finite almost everywhere.
\end{remark}

\section{Estimations of topological entropy for $A$-coupled-expanding systems}

In this section, some estimations of upper and lower bounds of topological entropy for an invariant subsystem of an
$A$-coupled-expanding system are obtained.

Let us recall the definitions of subshifts of finite type \cite{Rob99}. A matrix $A=(a_{ij})_{N\times N}$ $(N\geq2)$
is said to be a transition matrix if $a_{ij}=0$ or $1$ for all $i, j$; $\sum_{i=1}^{N}a_{ij}\geq1$ for all $j$;
and $\sum_{j=1}^{N}a_{ij}\geq1$ for all $i$, $1\leq i,j\leq N$.
Given a transition matrix $A=(a_{ij})_{N\times N}$, denote
\[\Sigma_{N}^{+}(A)=\{s=(s_0,s_1,\cdots): 1\leq s_j\leq N,\; a_{s_js_{j+1}}=1,\; j\geq0\}.\]
Note that $\Sigma_{N}^{+}(A)$ is a compact metric space with the metric
\[\rho(\alpha,\beta)=\sum_{i=0}^{\infty}\frac{d(a_i, b_i)}{2^i},\;\alpha=(a_0, a_1,\cdots),\;\beta=(b_0, b_1,\cdots)\in\Sigma_{N}^{+}(A),\]
where $d(a_i, b_i)=0$ if $a_i=b_i$, and $d(a_i, b_i)=1$ if $a_i\neq b_i$, $i\geq0$. The map
$\sigma_A: \Sigma_{N}^{+}(A)\to\Sigma_{N}^{+}(A)$ with
\[\sigma_A((s_0,s_1,s_2,\cdots))=(s_1,s_2,\cdots),\;(s_0,s_1,s_2\cdots)\in\Sigma_{N}^{+}(A),\]
is called a subshift of finite type associated with matrix $A$. Its topological entropy is equal to $\log\lambda(A)$, where
$\lambda(A)$ is the spectral radius of matrix $A$ and
\begin{align}\label{53}
\lambda(A)=\lim_{n\to\infty}{\parallel A^n\parallel}^{\frac{1}{n}},\;\parallel A\parallel=\sum_{1\leq i,j\leq N}a_{ij}.
\end{align}

\begin{definition}
Let $f_{0,\infty}$ be a sequence of self-maps on a uniform space $X$
and $A=(a_{ij})_{N\times N}$ be a transition matrix. If there exist $N$ nonempty subsets $V_1,\cdots,V_N$
of $X$ with pairwise disjoint interiors, such that
\[f_n(V_i)\supset\bigcup_{a_{ij}=1}V_j,\;1\leq i\leq N,\;n\geq0,\]
then $(X,f_{0,\infty})$ is called $A$-coupled-expanding in $V_i$, $1\leq i\leq N$.
Furthermore, $(X,f_{0,\infty})$ is said to be strictly $A$-coupled-expanding in $V_i$, $1\leq i\leq N$,
if $\bar{V}_i\cap\bar{V}_j=\emptyset$ for all $1\leq i\neq j\leq N$, where $\bar{V}_i$ denotes
the closure of the set $V_i$ with respect to $X$. In the special case that $a_{ij}=1$ for all
$1\leq i,j\leq N$, it is briefly called coupled-expanding or strictly coupled-expanding in
$V_i$, $1\leq i\leq N$.
\end{definition}

An estimation of lower bound of topological entropy for an invariant subsystem of $(X,f_{0,\infty})$ is given in the following result.

\begin{theorem}\label{hh21}
Let $f_{0,\infty}$ be a sequence of continuous self-maps on a compact Hausdorff uniform space $(X,\mathcal{U})$, $A=(a_{ij})_{N\times N}$ be a transition matrix and $V_1,\cdots,V_N$ be nonempty, closed and mutually disjoint subsets of $X$. Assume that
\begin{itemize}
\item[{\rm(i)}] $(X,f_{0,\infty})$ is $A$-coupled-expanding in $V_i$, $1\leq i\leq N$;

\item[{\rm(ii)}] $f_{0,\infty}$ is equi-continuous in $\bigcup_{i=1}^{N}V_i$.
\end{itemize}
Then, for each $n\geq0$, there exist a nonempty compact subset $\Lambda_n\subset\bigcup_{i=1}^{N}V_i$ with $f_n(\Lambda_n)=\Lambda_{n+1}$
and a map $h_n:\Lambda_n\to\Sigma_{N}^{+}(A)$ such that the invariant subsystem of $(X,f_{0,\infty})$ on $\{\Lambda_n\}_{n=0}^{\infty}$ is topologically $\{h_n\}_{n=0}^{\infty}$-equi-semiconjugate to $(\Sigma_{N}^{+}(A), \sigma_A)$.
Consequently,
\[h(f_{0,\infty},\Lambda_0)\geq\log\lambda(A),\]
where $\lambda(A)$ is specified in {\rm(\ref{53})}.
\end{theorem}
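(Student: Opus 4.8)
The plan is to realise the subshift $(\Sigma_{N}^{+}(A),\sigma_A)$ as a factor of an invariant subsystem of $(X,f_{0,\infty})$ via the itinerary map, and then invoke Theorem \ref{277}. For $n\geq0$ and an $A$-admissible word $(s_0,\dots,s_m)$ (that is, $a_{s_ks_{k+1}}=1$ for $0\leq k\leq m-1$), set
\[W_{n,m}(s_0,\dots,s_m)=V_{s_0}\cap f_n^{-1}(V_{s_1})\cap\cdots\cap f_n^{-m}(V_{s_m}),\]
and define $\Lambda_n=\bigcap_{m\geq0}\bigcup W_{n,m}(s_0,\dots,s_m)$, the inner union running over admissible words of length $m+1$. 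First I would show by downward induction on $m$ that each $W_{n,m}(s_0,\dots,s_m)$ is nonempty: since $a_{s_0s_1}=1$, hypothesis (i) gives $f_n(V_{s_0})\supset V_{s_1}\supset W_{n+1,m-1}(s_1,\dots,s_m)$, and lifting a point of $W_{n+1,m-1}(s_1,\dots,s_m)$ through $f_n$ into $V_{s_0}$ produces the required point (using $f_n^{j}=f_{n+1}^{j-1}\circ f_n$). Because $\Sigma_{N}^{+}(A)\neq\emptyset$ (immediate from the row-sum condition on $A$), every level $\bigcup W_{n,m}$ is nonempty; each level is closed (a finite union of closed sets, the $V_i$ being compact and the $f_n$ continuous) and the levels decrease with $m$, so $\Lambda_n$ is a nonempty compact subset of $\bigcup_{i=1}^NV_i$.

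Next I would note that $\Lambda_n=\bigcup_{s\in\Sigma_{N}^{+}(A)}\Lambda_n(s)$ with $\Lambda_n(s)=\{x\in V_{s_0}:f_n^k(x)\in V_{s_k}\text{ for all }k\geq0\}$; since the $V_i$ are pairwise disjoint, the symbols $s_k$ are determined by $x$, so $h_n:\Lambda_n\to\Sigma_{N}^{+}(A)$, $h_n(x)=(s_k)_{k\geq0}$ where $f_n^k(x)\in V_{s_k}$, is well defined, and it is surjective because each $\Lambda_n(s)\neq\emptyset$. The identity $f_n^{k+1}=f_{n+1}^k\circ f_n$ gives $f_n(\Lambda_n(s))\subset\Lambda_{n+1}(\sigma_As)$, hence $f_n(\Lambda_n)\subset\Lambda_{n+1}$ and $h_{n+1}\circ f_n=\sigma_A\circ h_n$; conversely, if $y\in\Lambda_{n+1}$ lies in $V_i$, the column-sum condition on $A$ yields $j$ with $a_{ji}=1$, so $f_n(V_j)\supset V_i\ni y$ provides a preimage $x\in V_j$ of $y$ whose itinerary is admissible, i.e. $x\in\Lambda_n$; thus $f_n(\Lambda_n)=\Lambda_{n+1}$ and $\{\Lambda_n\}_{n=0}^{\infty}$ is invariant.

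The crux is the equi-continuity of $\{h_n\}_{n=0}^{\infty}$, and this is exactly where hypothesis (ii) and Hausdorffness enter. Since $X$ is compact Hausdorff its uniformity is separated, and since the $V_i$ are compact and pairwise disjoint the compact set $\bigcup_{i\neq j}(V_i\times V_j)$ is disjoint from the diagonal, so there is $\gamma_0\in\mathcal{U}^{s,o}$ with $\gamma_0\cap\bigcup_{i\neq j}(V_i\times V_j)=\emptyset$; equivalently, two $\gamma_0$-close points of $\bigcup_iV_i$ lie in the same $V_i$. Given a target precision ``agree on the first $M$ coordinates'' in $\Sigma_{N}^{+}(A)$, I would use equi-continuity of $f_{0,\infty}$ on $\bigcup_iV_i$ repeatedly to build symmetric open entourages $\gamma_0=\eta_0\supset\eta_1\supset\cdots\supset\eta_M$ such that $(x,y)\in\eta_k$ with $x,y\in\bigcup_iV_i$ forces $(f_n(x),f_n(y))\in\eta_{k-1}$ for every $n\geq0$; then for $x,y\in\Lambda_n$ with $(x,y)\in\eta_M$ all the iterates $f_n^j(x),f_n^j(y)$ remain in $\bigcup_iV_i$, so telescoping gives $(f_n^j(x),f_n^j(y))\in\gamma_0$ for $0\leq j\leq M$, whence $f_n^j(x)$ and $f_n^j(y)$ lie in the same $V_i$ and $h_n(x),h_n(y)$ agree on the first $M+1$ symbols, uniformly in $n$. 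This furnishes the topological $\{h_n\}_{n=0}^{\infty}$-equi-semiconjugacy to $(\Sigma_{N}^{+}(A),\sigma_A)$; applying Theorem \ref{277} with the constant sequence $g_{0,\infty}=\{\sigma_A\}_{n=0}^{\infty}$ and $D_n=\Sigma_{N}^{+}(A)$, together with the fact that the subshift of finite type has entropy $\log\lambda(A)$, gives $h(f_{0,\infty},\Lambda_0)\geq\log\lambda(A)$. I expect the genuinely delicate points to be the uniform separation of the $V_i$ by a single entourage and the finite iteration of equi-continuity; everything else is bookkeeping with admissible words.
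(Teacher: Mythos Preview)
Your proposal is correct and follows essentially the same itinerary-map strategy as the paper: build the cylinders $W_{n,m}$, take their nested intersection to get $\Lambda_n$, define the coding map $h_n$, verify the semiconjugacy relation, and establish equi-continuity of $\{h_n\}$ via a single separating entourage $\gamma_0$ together with finitely many iterations of the equi-continuity of $f_{0,\infty}$; the entropy bound then drops out of Theorem~\ref{277}.

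One point worth noting: you define $\Lambda_n=\bigcap_{m}\bigcup_{(s_0,\dots,s_m)}W_{n,m}(s_0,\dots,s_m)$ and obtain compactness immediately as a decreasing intersection of finite unions of closed sets, whereas the paper takes $\Lambda_n=\bigcup_{\alpha\in\Sigma_N^+(A)}\bigcap_m V_\alpha^{m,n}$ and proves closedness by a sequential argument that passes through compactness of $\Sigma_N^+(A)$. Your route is cleaner, but it relies implicitly on the identification of the two descriptions of $\Lambda_n$, which you use without comment when you write ``$\Lambda_n=\bigcup_s\Lambda_n(s)$''. That identification is indeed forced by the pairwise disjointness of the $V_i$ (the itinerary symbols are uniquely determined, so the finite-level words cohere into a single $s\in\Sigma_N^+(A)$); it would be worth one sentence to make that explicit. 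Likewise, your careful telescoping $\eta_M\subset\cdots\subset\eta_0=\gamma_0$ spells out what the paper compresses into a single appeal to equi-continuity, and your remark that the iterates $f_n^j(x),f_n^j(y)$ stay in $\bigcup_iV_i$ (because $x,y\in\Lambda_n$) is exactly the point that licenses reusing the equi-continuity hypothesis at each step.
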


\begin{proof}
Since $(X,f_{0,\infty})$ is $A$-coupled-expanding in $V_i$, $1\leq i\leq N$, for any $m,n\geq0$ and  $\alpha=(a_0,a_1,\cdots)\in\Sigma_{N}^{+}(A)$,
\begin{align}\label{s1}
V_{\alpha}^{m,n}=\bigcap_{k=0}^{m}f_{n}^{-k}(V_{a_k})\neq\emptyset.
\end{align}
Fix $n\geq0$. Then $V_{\alpha}^{m,n}$ is a nonempty closed subset of $X$
and satisfies that $V_{\alpha}^{m+1,n}\subset V_{\alpha}^{m,n}$ for any $m\geq0$ and $\alpha\in\Sigma_{N}^{+}(A)$.
Thus, $\bigcap_{m=0}^{\infty}V_{\alpha}^{m,n}\neq\emptyset$ for all $\alpha\in\Sigma_{N}^{+}(A)$ by the compactness of $X$.
Denote
\begin{align}\label{s2}
\Lambda_n:=\bigcup_{\alpha\in\Sigma_{N}^{+}(A)}\bigcap_{m=0}^{\infty}V_{\alpha}^{m,n}.
\end{align}
Clearly, $\Lambda_n\neq\emptyset$ and $\Lambda_n\subset\bigcup_{i=1}^{N}V_i$.
It is easy to verify that $f_n(x)\in\bigcap_{m=0}^{\infty}V_{\sigma_{A}(\alpha)}^{m,n+1}$.
Thus, $f_n(\Lambda_n)\subset\Lambda_{n+1}$. One also easily verify that $f_n(\Lambda_n)\supset\Lambda_{n+1}$
by assumption (i) and the fact that $\sigma_{A}$ is surjective.
So, $f_n(\Lambda_n)=\Lambda_{n+1}$.

We claim that $\Lambda_n$ is a compact subset of $X$.
Let $\{z_l\}_{l=1}^{\infty}$ be a sequence which converges to a point $z\in X$.
Then, there exists $\alpha_l\in\Sigma_{N}^{+}(A)$ such that $z_l\in\bigcap_{m=0}^{\infty}V_{\alpha_l}^{m,n}$
for each $l\geq1$. Since $\Sigma_{N}^{+}(A)$ is compact, $\{\alpha_l\}_{l=1}^{\infty}$ has a convergent subsequence.
Without loss of generality, suppose that $\{\alpha_l\}_{l=1}^{\infty}$ converges to $\alpha=(a_0, a_1,\cdots)$.
So, for any $m\geq0$, there exists $k_m\geq1$ such that $V_{\alpha_l}^{m,n}=V_{\alpha}^{m,n}$ for all $l\geq k_m$.
Hence, $z_l\in V_{\alpha}^{m,n}$ for all $l\geq k_m$, which implies that $z\in V_{\alpha}^{m,n}$.
Therefore, $z\in\bigcap_{m=0}^{\infty}V_{\alpha}^{m,n}$, and consequently, $\Lambda_n$ is closed, and thus compact.

For any $x\in\Lambda_n$, there exists $\alpha\in\Sigma_{N}^{+}(A)$ such that $x\in\bigcap_{m=0}^{\infty}V_{\alpha}^{m,n}$.
Define $h_n(x)=\alpha$. Then the map $h_n:\Lambda_n\to\Sigma_{N}^{+}(A)$ is well defined since $V_i\cap V_j=\emptyset$
for all $1\leq i\neq j\leq N$. Clearly, $h_n$ is surjective. Moreover,
\[h_{n+1}\circ f_n(x)=\sigma_{A}(\alpha)=\sigma_{A}\circ h_n(x),\;x\in\Lambda_n.\]

Next, it is to show that $\{h_n\}_{n=0}^{\infty}$ is equi-continuous.
Since $V_1,\cdots,V_N$ are mutually disjoint closed subsets of compact Hausdorff space $X$,
there exists $\gamma\in{\mathcal{U}}^{s,o}$ such that for any $1\leq i\neq j\leq N$,
\begin{align}\label{s3}
\gamma\bigcap(V_i\times V_j)=\emptyset.
\end{align}
For any $\epsilon>0$, there exists $N\geq1$ such that $2^{-N}<\epsilon$.
By the equi-continuity of $f_{0,\infty}$ in $\bigcup_{i=1}^{N}V_i$,
there exists $\eta\in{\mathcal{U}}^{s,o}$ such that for any $n\geq0$
and any $x,y\in\Lambda_n$ with $h_n(x)=\alpha=(a_0, a_1,\cdots)$ and $h_n(y)=\beta=(b_0, b_1,\cdots)$,
\[(x,y)\in\eta\Rightarrow(f_{n}^{j}(x),f_{n}^{j}(y))\in\gamma,\;0\leq j\leq N.\]
It follows from (\ref{s1}) and (\ref{s2}) that
\[f_{n}^{j}(x)\in V_{a_j},\;f_{n}^{j}(y)\in V_{b_j},\;0\leq j\leq N.\]
This, together with (\ref{s3}), implies that
\[a_j=b_j,\;0\leq j\leq N.\]
Thus,
\[\rho(h_n(x),h_n(y))=\rho(\alpha,\beta)\leq2^{-N}<\epsilon.\]
Hence, $\{h_n\}_{n=0}^{\infty}$ is equi-continuous in $\{\Lambda_n\}_{n=0}^{\infty}$.
Therefore, the invariant subsystem of $(X,f_{0,\infty})$ on $\{\Lambda_n\}_{n=0}^{\infty}$
is topologically $\{h_n\}_{n=0}^{\infty}$-equi-semiconjugate to $(\Sigma_{N}^{+}(A),\sigma_{A})$,
and consequently
\[h(f_{0,\infty}, \Lambda_0)\geq h(\Sigma_{N}^{+}(A),\sigma_{A})=\log\lambda(A)\]
by Theorem \ref{277}.
\end{proof}

The next result gives an estimation of lower bound of topological entropy for the full system.

\begin{Corollary}\label{hh212}
Let all the assumptions in Theorem \ref{hh21} hold and $\bigcup_{i=1}^{N}V_i=X$ except that
assumption {\rm(i)} is replaced by
\begin{align}\label{00}
f_n(V_i)=\bigcup_{a_{ij}=1}V_j,\;1\leq i\leq N,\;n\geq0.
\end{align}
Then $(X,f_{0,\infty})$ is topologically equi-semiconjugate to $(\Sigma_{N}^{+}(A),\sigma_A)$. Consequently,
\[h(f_{0,\infty})\geq\log\lambda(A).\]
\end{Corollary}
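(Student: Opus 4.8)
The plan is to deduce the corollary from Theorem \ref{hh21} and then to upgrade the invariant sets $\Lambda_n$ produced there to all of $X$. First I would note that the equality in (\ref{00}) in particular gives $f_n(V_i)\supset\bigcup_{a_{ij}=1}V_j$ for every $1\leq i\leq N$ and $n\geq0$, so that $(X,f_{0,\infty})$ is $A$-coupled-expanding in $V_1,\dots,V_N$; since moreover $f_{0,\infty}$ is equi-continuous on $\bigcup_{i=1}^N V_i=X$ and the $V_i$ are nonempty, closed and mutually disjoint, all hypotheses of Theorem \ref{hh21} are in force. Hence Theorem \ref{hh21} provides, for each $n\geq0$, a nonempty compact set $\Lambda_n\subset\bigcup_{i=1}^N V_i$ with $f_n(\Lambda_n)=\Lambda_{n+1}$ and a surjection $h_n:\Lambda_n\to\Sigma_N^{+}(A)$ such that the invariant subsystem on $\{\Lambda_n\}_{n=0}^{\infty}$ is topologically $\{h_n\}_{n=0}^{\infty}$-equi-semiconjugate to $(\Sigma_N^{+}(A),\sigma_A)$, and it also yields $h(f_{0,\infty},\Lambda_0)\geq\log\lambda(A)$.

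The remaining point, which is where the strengthened hypotheses of the corollary are genuinely used, is to show that $\Lambda_n=X$ for every $n\geq0$. Since $\Lambda_n\subset\bigcup_{i=1}^N V_i=X$ is automatic, it suffices to prove $X\subset\Lambda_n$. Fix $x\in X$ and $n\geq0$. Because the $V_i$ are disjoint and cover $X$, there is a unique index $a_0$ with $x\in V_{a_0}$. Using the inclusion $f_n(V_{a_0})\subset\bigcup_{a_{a_0 j}=1}V_j$ coming from (\ref{00}) together with the disjointness of the $V_j$, the point $f_n(x)$ lies in exactly one $V_{a_1}$ with $a_{a_0 a_1}=1$ (such an $a_1$ exists since $\sum_{j}a_{a_0 j}\geq1$). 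Iterating, and using $f_n^{k+1}(x)=f_{n+k}(f_n^{k}(x))\in f_{n+k}(V_{a_k})\subset\bigcup_{a_{a_k j}=1}V_j$, I would construct inductively a point $\alpha=(a_0,a_1,a_2,\dots)$ satisfying $a_{a_k a_{k+1}}=1$ for all $k\geq0$, i.e. $\alpha\in\Sigma_N^{+}(A)$, with $f_n^{k}(x)\in V_{a_k}$ for every $k\geq0$. Equivalently $x\in f_n^{-k}(V_{a_k})$ for each $k$, so $x\in\bigcap_{m\geq0}\bigcap_{k=0}^{m}f_n^{-k}(V_{a_k})\subset\Lambda_n$. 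Therefore $\Lambda_n=X$.

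Once $\Lambda_n=X$ for all $n\geq0$, the invariant subsystem of $(X,f_{0,\infty})$ on $\{\Lambda_n\}_{n=0}^{\infty}$ is nothing but $(X,f_{0,\infty})$ itself, so $(X,f_{0,\infty})$ is topologically $\{h_n\}_{n=0}^{\infty}$-equi-semiconjugate to $(\Sigma_N^{+}(A),\sigma_A)$, which is the first assertion. For the entropy estimate, $\Lambda_0=X$ gives $h(f_{0,\infty})=h(f_{0,\infty},\Lambda_0)$, and the inequality $h(f_{0,\infty},\Lambda_0)\geq\log\lambda(A)$ already supplied by Theorem \ref{hh21} — equivalently, Theorem \ref{277} applied to this equi-semiconjugacy together with $h(\Sigma_N^{+}(A),\sigma_A)=\log\lambda(A)$ from (\ref{53}) — yields $h(f_{0,\infty})\geq\log\lambda(A)$. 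I expect the only real work to be the inductive construction of the admissible itinerary $\alpha$ in the middle paragraph; everything else is bookkeeping on top of Theorem \ref{hh21}.
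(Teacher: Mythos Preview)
Your proposal is correct and follows exactly the paper's approach: reduce to Theorem~\ref{hh21} and then show $\Lambda_n=X$ for all $n\geq0$ by producing, for each $x\in X$, an admissible itinerary $\alpha\in\Sigma_N^{+}(A)$ with $x\in\bigcap_{m\geq0}V_{\alpha}^{m,n}$. The paper's proof is a one-line sketch of precisely this, while you have spelled out the inductive construction of $\alpha$ using the inclusion $f_n(V_i)\subset\bigcup_{a_{ij}=1}V_j$ from (\ref{00}) together with the disjointness and covering assumptions.
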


\begin{proof}
By the proof of Theorem \ref{hh21}, it is only to show that $\Lambda_n=X$ for any $n\geq0$,
where $\Lambda_n$ is specified in (\ref{s2}). Let $n\geq0$. By the assumption that $\bigcup_{i=1}^{N}V_i=X$ and (\ref{00}),
there exists $\beta\in\Sigma_{N}^{+}(A)$ such that $x\in\bigcap_{m=0}^{\infty}V_{\beta}^{m,n}$ for any $x\in X$,
which implies that $X\subset\Lambda_n$. Hence, $\Lambda_n=X$. Then the conclusion follows from Theorem \ref{hh21}.
\end{proof}

To proceed, we need the following lemma.

\begin{lemma}\label{hh22}
Let $(X,\mathcal{U})$ and $(Y,\mathcal{V})$ be compact uniform spaces, $Y_n\subset Y$ and $\pi_n: X\to Y_n$ be a map
for each $n\geq0$. Assume that $\{\pi_n\}_{n=0}^{\infty}$ is equi-continuous at any point of $X$; that is, for any fixed point $x\in X$
and any $\gamma\in{\mathcal{V}}^{s,o}$, there exists $\eta\in{\mathcal{U}}^{s,o}$ such that $\pi_n(y)\in\gamma[\pi_n(x)]$ for any
$y\in\eta[x]$ and $n\geq0$. Then $\{\pi_n\}_{n=0}^{\infty}$ is equi-continuous in $X$.
\end{lemma}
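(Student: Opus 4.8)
The plan is to run the classical compactness argument --- the one showing that a continuous map on a compact space is uniformly continuous --- uniformly over the whole family $\{\pi_n\}_{n=0}^{\infty}$.

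First I would fix $\gamma\in{\mathcal V}^{s,o}$ and, using axiom (v) of the uniform structure together with the fact that ${\mathcal V}^{s,o}$ is a base for $\mathcal V$, choose $\delta\in{\mathcal V}^{s,o}$ with $\delta\circ\delta\subset\gamma$. For each $x\in X$, pointwise equi-continuity of $\{\pi_n\}_{n=0}^{\infty}$ applied to $\delta$ and $x$ yields an $\eta_x\in{\mathcal U}^{s,o}$ with $\pi_n(y)\in\delta[\pi_n(x)]$ for every $y\in\eta_x[x]$ and every $n\geq0$; then, again by axiom (v) and the fact that ${\mathcal U}^{s,o}$ is a base, I would pick $\zeta_x\in{\mathcal U}^{s,o}$ with $\zeta_x\circ\zeta_x\subset\eta_x$.

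Next, $\{\zeta_x[x]:x\in X\}$ is an open cover of the compact space $X$, so it admits a finite subcover $\{\zeta_{x_i}[x_i]:1\leq i\leq p\}$; set $\eta:=\bigcap_{i=1}^{p}\zeta_{x_i}$, which again lies in ${\mathcal U}^{s,o}$ since a finite intersection of symmetric open indices is a symmetric open index. Now suppose $(x,y)\in\eta$, and choose $i$ with $x\in\zeta_{x_i}[x_i]$, i.e. $(x_i,x)\in\zeta_{x_i}$. From $(x,y)\in\eta\subset\zeta_{x_i}$ we get $(x_i,y)\in\zeta_{x_i}\circ\zeta_{x_i}\subset\eta_{x_i}$, and similarly $x\in\zeta_{x_i}[x_i]\subset\eta_{x_i}[x_i]$. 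Hence for every $n\geq0$ both $\pi_n(x)$ and $\pi_n(y)$ lie in $\delta[\pi_n(x_i)]$, and since $\delta$ is symmetric, $(\pi_n(x),\pi_n(y))\in\delta\circ\delta\subset\gamma$. As $\gamma\in{\mathcal V}^{s,o}$ was arbitrary and ${\mathcal V}^{s,o}$ is a base, this gives equi-continuity of $\{\pi_n\}_{n=0}^{\infty}$ in $X$.

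I do not anticipate a real obstacle here: the argument is essentially bookkeeping with entourages. The two points needing a word of care are that each index produced along the way may be taken symmetric and open --- guaranteed because ${\mathcal U}^{s,o}$ and ${\mathcal V}^{s,o}$ are bases --- and that the finite intersection $\eta$ stays in ${\mathcal U}^{s,o}$, which is immediate. The single genuinely essential ingredient is the compactness of $X$, used precisely to pass from the pointwise choices $\zeta_x$ to one global index $\eta$.
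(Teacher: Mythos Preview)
Your proof is correct and follows essentially the same route as the paper's own argument: split the target entourage, use pointwise equi-continuity to get local entourages, halve those, extract a finite subcover by compactness, and intersect. The only differences are notational (the paper's $\gamma',\gamma,\eta_x,u_x,u$ correspond to your $\gamma,\delta,\eta_x,\zeta_x,\eta$).
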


\begin{proof}
Fix $\gamma'\in{\mathcal{V}}^{s,o}$. Then there exists $\gamma\in{\mathcal{V}}^{s,o}$ such that $\gamma^2\subset\gamma'$.
By the assumptions, for any $x\in X$, there exists $\eta_x\in{\mathcal{U}}^{s,o}$ such that for any $y\in X$,
\begin{align}\label{1011}
y\in\eta_x[x]\Rightarrow\pi_n(y)\in\gamma[\pi_n(x)],\;n\geq0.
\end{align}
For $\eta_x$, there exists $u_x\in{\mathcal{U}}^{s,o}$ such that $u_x^{2}\subset\eta_x$.
Since $\{u_x[x]: x\in X\}$ is an open cover of compact space $X$, there exists a finite subcover
$\{u_{x_{i}}[x_i]: 1\leq i\leq m\}$ for some $m\geq1$.
Denote $u=\bigcap_{i=1}^{m}u_{x_{i}}$.
Then $u\in{\mathcal{U}}^{s,o}$. So, for any $y_1,y_2\in X$ with $(y_1,y_2)\in u$, there exists $1\leq i_0\leq m$ such that
$y_1\in u_{x_{i_0}}[x_{i_0}]$. Then
\[(y_1,x_{i_0})\in u_{x_{i_0}}^2\subset\eta_{x_{i_0}},\;
(y_2,x_{i_0})\in u_{x_{i_0}}^2\subset\eta_{x_{i_0}}.\]
By (\ref{1011}), we have that
\begin{align}\label{1012}
(\pi_n(y_1),\pi_n(x_{i_0}))\in\gamma,\;(\pi_n(y_2),\pi_n(x_{i_0}))\in\gamma,\;n\geq0,
\end{align}
which yields that
\[(\pi_n(y_1),\pi_n(y_2))\in\gamma^2\subset\gamma',\;n\geq0.\]
Therefore, $\{\pi_n\}_{n=0}^{\infty}$ is equi-continuous in $X$.
\end{proof}

An estimation of upper bound of topological entropy for an invariant subsystem of $(X,f_{0,\infty})$ is given in the next result.

\begin{theorem}\label{hh2}
Let $f_{0,\infty}$ be a sequence of continuous self-maps on a compact Hausdorff uniform space $(X,\mathcal{U})$,
$A=(a_{ij})_{N\times N}$ be a transition matrix and $V_1,\cdots,V_N$ be nonempty and closed
subsets of $X$. Assume that
\begin{itemize}
\item[{\rm(i)}] $(X,f_{0,\infty})$ is $A$-coupled-expanding in $V_i$, $1\leq i\leq N$;

\item[{\rm(ii)}] for any $\alpha\in\Sigma_{N}^{+}(A)$ and $\gamma\in{\mathcal{U}}^{s,o}$,
there exists $M\geq1$ such that
\begin{align}\label{90}
m\geq M\Rightarrow V_{\alpha}^{m,n}\times V_{\alpha}^{m,n}\subset\gamma, \;n\geq0,
\end{align}
\end{itemize}
where $V_{\alpha}^{m,n}$ is specified in {\rm(\ref{s1})}.
Then, for each $n\geq0$, there exist a nonempty compact subset $\Lambda_n\subset\bigcup_{i=1}^{N}V_i$
with $f_n(\Lambda_n)=\Lambda_{n+1}$ and a map $\pi_n:\Sigma_{N}^{+}(A)\to\Lambda_n$ such that
$(\Sigma_{N}^{+}(A), \sigma_A)$ is topologically $\{\pi_n\}_{n=0}^{\infty}$-equi-semiconjugate
to the invariant subsystem of $(X,f_{0,\infty})$ on $\{\Lambda_n\}_{n=0}^{\infty}$. Consequently,
\[h(f_{0,\infty},\Lambda_0)\leq\log\lambda(A).\]
\end{theorem}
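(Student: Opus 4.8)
The plan is to recycle the set-theoretic construction from the proof of Theorem~\ref{hh21} and then build an equi-semiconjugacy in the \emph{reverse} direction, using hypothesis (ii) to feed Lemma~\ref{hh22}. For each $n\ge 0$ I would set
\[\Lambda_n:=\bigcup_{\alpha\in\Sigma_{N}^{+}(A)}\bigcap_{m=0}^{\infty}V_{\alpha}^{m,n},\]
exactly as in (\ref{s2}). The facts that each $V_{\alpha}^{m,n}$ is nonempty (by (i)) and closed, that $\{V_{\alpha}^{m,n}\}_{m\ge 0}$ is nested, that $\bigcap_{m}V_{\alpha}^{m,n}\neq\emptyset$ by compactness of $X$, that $\Lambda_n$ is a nonempty compact subset of $\bigcup_{i=1}^{N}V_i$, and that $f_n(\Lambda_n)=\Lambda_{n+1}$, all carry over verbatim from the proof of Theorem~\ref{hh21}, since none of those steps used the disjointness of the $V_i$.

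Next I would check that $\pi_n$ is well defined. Fix $\alpha$; for a given $\gamma\in{\mathcal{U}}^{s,o}$ choose $M$ as in (\ref{90}), so that $\bigcap_{m}V_{\alpha}^{m,n}\subset V_{\alpha}^{M,n}$ and hence $\big(\bigcap_{m}V_{\alpha}^{m,n}\big)\times\big(\bigcap_{m}V_{\alpha}^{m,n}\big)\subset V_{\alpha}^{M,n}\times V_{\alpha}^{M,n}\subset\gamma$. Letting $\gamma$ run through the base ${\mathcal{U}}^{s,o}$ and using that $X$ is Hausdorff, hence $\mathcal{U}$ is separated, this product lies in $\bigtriangleup_{X}$, so $\bigcap_{m}V_{\alpha}^{m,n}$ is a single point, which I call $\pi_n(\alpha)$. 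Surjectivity of $\pi_n:\Sigma_{N}^{+}(A)\to\Lambda_n$ is immediate from the definition of $\Lambda_n$, and the identity $\pi_{n+1}\circ\sigma_A=f_n\circ\pi_n$ follows from the inclusion $f_n\big(\bigcap_{m}V_{\alpha}^{m,n}\big)\subset\bigcap_{m}V_{\sigma_A(\alpha)}^{m,n+1}$ already noted in the proof of Theorem~\ref{hh21}.

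The crux---and the step I expect to be the main obstacle---is the equi-continuity of $\{\pi_n\}_{n=0}^{\infty}$. Here I would invoke Lemma~\ref{hh22} with $\Sigma_{N}^{+}(A)$ as source space and $X$ as target space, so that it suffices to prove pointwise equi-continuity uniform in $n$. Fix $\alpha=(a_0,a_1,\dots)\in\Sigma_{N}^{+}(A)$ and $\gamma\in{\mathcal{U}}^{s,o}$, and pick $M$ as in (\ref{90}). If $\beta=(b_0,b_1,\dots)$ satisfies $\rho(\alpha,\beta)<2^{-M}$, then $a_j=b_j$ for all $0\le j\le M$, whence $V_{\beta}^{M,n}=V_{\alpha}^{M,n}$ for every $n\ge 0$; since $\pi_n(\alpha),\pi_n(\beta)\in V_{\alpha}^{M,n}$, condition (\ref{90}) forces $(\pi_n(\alpha),\pi_n(\beta))\in\gamma$ for all $n\ge 0$. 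This is precisely the hypothesis of Lemma~\ref{hh22}, which then gives that $\{\pi_n\}_{n=0}^{\infty}$ is equi-continuous, so $(\Sigma_{N}^{+}(A),\sigma_A)$ is topologically $\{\pi_n\}_{n=0}^{\infty}$-equi-semiconjugate to the invariant subsystem of $(X,f_{0,\infty})$ on $\{\Lambda_n\}_{n=0}^{\infty}$.

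Finally I would apply Theorem~\ref{277} to this equi-semiconjugacy, with $(\Sigma_{N}^{+}(A),\sigma_A)$ in the role of the upstairs system, to obtain $h(f_{0,\infty},\Lambda_0)\le h(\sigma_A,\Sigma_{N}^{+}(A))$, and finish with $h(\sigma_A,\Sigma_{N}^{+}(A))=\log\lambda(A)$ from (\ref{53}). Apart from the equi-continuity argument everything is a transcription of the proof of Theorem~\ref{hh21}; the one genuinely new point is realizing that hypothesis (ii) is exactly what turns the uniform-in-$n$ pointwise estimate into the input required by Lemma~\ref{hh22}.
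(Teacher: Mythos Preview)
Your proposal is correct and follows essentially the same route as the paper: define $\Lambda_n$ via (\ref{s2}), use (ii) together with separatedness to see that each $\bigcap_m V_{\alpha}^{m,n}$ is a singleton $\pi_n(\alpha)$, verify $\pi_{n+1}\circ\sigma_A=f_n\circ\pi_n$, establish pointwise-in-$\alpha$ equi-continuity from (\ref{90}) and upgrade it with Lemma~\ref{hh22}, then conclude by Theorem~\ref{277}. The only cosmetic difference is that the paper obtains compactness of $\Lambda_n$ as $\Lambda_n=\pi_n(\Sigma_N^+(A))$, the continuous image of a compact set, rather than by invoking the sequential argument from Theorem~\ref{hh21}.
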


\begin{proof}
For each $n\geq0$, by assumption (i) and the continuity of $f_n$, it is easy to verify
that $V_{\alpha}^{m,n}$ is nonempty, closed and satisfies that $V_{\alpha}^{m+1,n}\subset V_{\alpha}^{m,n}$
for any $\alpha\in\Sigma_{N}^{+}(A)$ and $m\geq0$. This, together with assumption (ii) and
$\mathcal{U}$ is separated, yields that $\bigcap_{m=0}^{\infty}V_{\alpha}^{m,n}$ is a singleton set
for any $\alpha\in\Sigma_{N}^{+}(A)$ and $n\geq0$. Denote
\begin{align}\label{111}
\bigcap_{m=0}^{\infty}V_{\alpha}^{m,n}=\{x_n(\alpha)\},\;
\Lambda_n=\{x_n(\alpha):\alpha\in\Sigma_{N}^{+}(A)\}.
\end{align}
Clearly, $\Lambda_n\neq\emptyset$, $\Lambda_n\subset\bigcup_{i=1}^{N}V_i$, and
\[f_n(x_n(\alpha))=x_{n+1}(\sigma_{A}(\alpha)),\;\alpha\in\Sigma_{N}^{+}(A),\;n\geq0.\]
Hence, it follows from the fact that $\sigma_{A}$ is surjective that $f_n(\Lambda_n)=\Lambda_{n+1}$.

Define a map $\pi_n:\Sigma_{N}^{+}(A)\to\Lambda_n$ by $\pi_n(\alpha)=x_n(\alpha)$ for any $\alpha\in\Sigma_{N}^{+}(A)$ and $n\geq0$.
Clearly, $\pi_n$ is well defined and surjective. Moreover, we have
\[f_n\circ\pi_n(\alpha)=f_n(x_n(\alpha))=x_{n+1}(\sigma_{A}(\alpha))=\pi_{n+1}\circ\sigma_{A}(\alpha),\;\alpha\in\Sigma_{N}^{+}(A),\;n\geq0.\]
Next, it is to show that $\{\pi_n\}_{n=0}^{\infty}$ is equi-continuous in $\Sigma_{N}^{+}(A)$.
Fix any $\alpha=(a_0,a_1,\cdots)\in\Sigma_{N}^{+}(A)$.
By assumption (ii), for any $\gamma\in{\mathcal{U}}^{s,o}$, there exists $M\geq1$ such that (\ref{90}) holds.
Set $\delta= 1/2^{M+1}$. For any $\beta=(b_0,b_1,\cdots)\in\Sigma_{N}^{+}(A)$,
\[\rho(\alpha,\beta)<\delta\Rightarrow a_j=b_j,\; 0\leq j\leq M+1.\]
So, $x_n(\alpha),x_n(\beta)\in V_{\alpha}^{M+1,n}$, and
\[(\pi_n(\alpha),\pi_n(\beta))=(x_n(\alpha),x_n(\beta))\in V_{\alpha}^{M+1,n}\times V_{\alpha}^{M+1,n}\subset\gamma,\;n\geq0.\]
Thus, $\{\pi_n\}_{n=0}^{\infty}$ is equi-continuous at $\alpha$.
Hence, $\{\pi_n\}_{n=0}^{\infty}$ is equi-continuous in $\Sigma_{N}^{+}(A)$ by Lemma \ref{hh22}.
Further, for any $n\geq0$, $\Lambda_n$ is compact since $\Sigma_{N}^{+}(A)$ is compact and
$\Lambda_n=\pi_n(\Sigma_{N}^{+}(A))$.
Hence, $(\Sigma_{N}^{+}(A),\sigma_A)$ is topologically $\{\pi_n\}_{n=0}^{\infty}$-equi-semiconjugate
to the invariant subsystem of $(X,f_{0,\infty})$ on $\{\Lambda_n\}_{n=0}^{\infty}$.
It follows from Theorem \ref{277} that
\[h(f_{0,\infty},\Lambda_0)\leq\log\lambda(A).\]
\end{proof}

\begin{remark}
Theorems \ref{hh21} and \ref{hh2} are generalizations of Theorem 3 in \cite{Shao20} and Theorem 4.1 in \cite{Shao16}, respectively,
to a compact uniform space.
\end{remark}

With a similar proof to that of Corollary \ref{hh212}, one has the following result.

\begin{Corollary}\label{hh23}
Let all the assumptions in Theorem \ref{hh2} hold and $\bigcup_{i=1}^{N}V_i=X$ except that
assumption {\rm(i)} is replaced by (\ref{00}). Then $(\Sigma_{N}^{+}(A),\sigma_A)$ is topologically
equi-semiconjugate to $(X,f_{0,\infty})$. Consequently,
\[h(f_{0,\infty})\leq\log\lambda(A).\]
\end{Corollary}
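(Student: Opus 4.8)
The plan is to follow the proof of Corollary \ref{hh212} almost verbatim, the only new ingredient being that every point of $X$ now carries an admissible itinerary. First I would observe that all hypotheses of Theorem \ref{hh2} are in force: the equality (\ref{00}) trivially implies assumption (i) of Theorem \ref{hh2}, and assumption (ii) is unchanged. Hence Theorem \ref{hh2} already supplies, for each $n\geq0$, a nonempty compact set $\Lambda_n\subset\bigcup_{i=1}^{N}V_i$ with $f_n(\Lambda_n)=\Lambda_{n+1}$, a surjection $\pi_n:\Sigma_{N}^{+}(A)\to\Lambda_n$ with $f_n\circ\pi_n=\pi_{n+1}\circ\sigma_A$, and $\{\pi_n\}_{n=0}^{\infty}$ equi-continuous; here $\Lambda_n=\{x_n(\alpha):\alpha\in\Sigma_{N}^{+}(A)\}$ with $\bigcap_{m=0}^{\infty}V_{\alpha}^{m,n}=\{x_n(\alpha)\}$ as in (\ref{111}). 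Thus it only remains to upgrade $\Lambda_n\subset\bigcup_{i=1}^{N}V_i=X$ to the equality $\Lambda_n=X$.

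To prove $X\subset\Lambda_n$, fix $n\geq0$ and $x\in X$. Since $\bigcup_{i=1}^{N}V_i=X$, choose $a_0$ with $x\in V_{a_0}$. Suppose $a_0,\dots,a_k$ have been selected so that $a_{a_sa_{s+1}}=1$ for $0\leq s\leq k-1$ and $f_n^{s}(x)\in V_{a_s}$ for $0\leq s\leq k$. Applying (\ref{00}) at the index $n+k$ gives $f_n^{k+1}(x)=f_{n+k}\big(f_n^{k}(x)\big)\in f_{n+k}(V_{a_k})=\bigcup_{a_{a_kj}=1}V_j$, so there is $a_{k+1}$ with $a_{a_ka_{k+1}}=1$ and $f_n^{k+1}(x)\in V_{a_{k+1}}$. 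Inductively this produces $\alpha=(a_0,a_1,\dots)\in\Sigma_{N}^{+}(A)$ with $f_n^{k}(x)\in V_{a_k}$, i.e. $x\in f_n^{-k}(V_{a_k})$, for every $k\geq0$; hence $x\in\bigcap_{m=0}^{\infty}V_{\alpha}^{m,n}=\{x_n(\alpha)\}\subset\Lambda_n$. Therefore $\Lambda_n=X$ for all $n\geq0$.

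Finally, since $\Lambda_n=X$ for every $n\geq0$, the invariant subsystem of $(X,f_{0,\infty})$ on $\{\Lambda_n\}_{n=0}^{\infty}$ coincides with $(X,f_{0,\infty})$, so the conclusion of Theorem \ref{hh2} reads precisely that $(\Sigma_{N}^{+}(A),\sigma_A)$ is topologically $\{\pi_n\}_{n=0}^{\infty}$-equi-semiconjugate to $(X,f_{0,\infty})$ and that $h(f_{0,\infty})=h(f_{0,\infty},X)=h(f_{0,\infty},\Lambda_0)\leq\log\lambda(A)$, which is the assertion; alternatively one invokes Theorem \ref{277} with $\Sigma_{N}^{+}(A)$ as the domain and $X$ as the target. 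The only real point to watch is that the induction genuinely needs the \emph{equality} in (\ref{00}), not merely the inclusion of assumption (i) of Theorem \ref{hh2}: one must know $f_{n+k}(V_{a_k})$ is contained in $\bigcup_{a_{a_kj}=1}V_j$ so that the forward orbit of $x$ stays inside the $V_i$'s and generates a bona fide path in the transition graph of $A$.
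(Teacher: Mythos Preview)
Your proposal is correct and matches the paper's approach exactly: the paper merely says ``with a similar proof to that of Corollary~\ref{hh212}'', and that proof does precisely what you do---reduce to showing $\Lambda_n=X$ by building, from $\bigcup_i V_i=X$ and the equality (\ref{00}), an admissible itinerary $\alpha\in\Sigma_N^{+}(A)$ for each $x\in X$ so that $x\in\bigcap_{m\geq0}V_\alpha^{m,n}$. Your inductive construction of the itinerary (and your remark that the \emph{equality} in (\ref{00}), not merely the containment of assumption~(i), is what forces $f_{n+k}(V_{a_k})\subset\bigcup_{a_{a_kj}=1}V_j$) spells out the one-line step the paper leaves implicit.
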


\section*{Acknowledgement}
The author thank Professor Xiongping Dai for his useful discussions.
This research was partially supported by the Natural Science Foundation of Jiangsu Province of China (No. BK20200435)
and the Fundamental Research Funds for the Central Universities (No. NS2021053).

\end{document}